\newcommand\D{\displaystyle}
\theoremstyle{plain}
\newtheorem{theorem}{Theorem}[section]
\newtheorem{lemma}[theorem]{Lemma}
\newtheorem{proposition}[theorem]{Proposition}
\theoremstyle{definition}
\theoremstyle{remark}
\newtheorem{remark}[theorem]{Remark}
\newtheorem*{remark*}{Remark}
\numberwithin{equation}{section}
\title[]{LU-type factorizations for birth--death processes \\ and their Darboux transformations}
\author{Jos\'e Arcia-Manoleskos}
\address{Jos\'e Arcia-Manoleskos, Departamento de An\'{a}lisis Matem\'{a}tico. Universidad de Sevilla. Apdo (P. O. BOX) 1160. 41080 Sevilla. Spain.}
\address{Departamento de Matem\'{a}tica. Facultad de Ciencias Naturales, Exactas y Tecnolog\'{i}a. Universidad de Panam\'a}
\email{josarcman@alum.us.es}
\email{jose.arciam@up.ac.pa}
\author{Manuel Dom\'inguez de la Iglesia}
\address{Manuel Dom\'inguez de la Iglesia, Universidad de Alcal\'a, Departamento de F\'isica y Matem\'aticas, Campus universitario, E-28805 Alcal\'a de Henares (Madrid), Spain.}
\email{manuel.dominguezi@uah.es}
\thanks{This work was partially supported by research project PID2024-155133NB-I00 funded by MICIU/AEI, Spain, and PIUAH25/CC-006, funded by the Universidad de Alcal\'a, Spain. The first author was supported by a doctoral scholarship from SENACYT, Panama, under program BIDP-2022-026.}
\date{\today}
\subjclass[2010]{60J10, 33C45, 42C05}
\keywords{Birth--death processes. LU factorizations. Orthogonal polynomials. Darboux transformations.}
\begin{document}

\maketitle

\begin{abstract}

We study LU-type factorizations of the infinitesimal generator of a birth--death process on $\mathbb{N}_0$. Our goal is to characterize those factorizations whose Darboux transformations (that is, inverting the order of the factors) yield new infinitesimal generators of birth--death processes. Two types are considered: lower--upper (LU), which is unique and upper--lower (UL), which involves a free parameter. For both cases, we determine the conditions under which such factorizations can occur, derive explicit formulas for their coefficients, and provide a probabilistic interpretation of the factors. The spectral properties and associated orthogonal polynomials of the Darboux transformations are also analyzed. Finally, the general results are applied to classical examples such as the $M/M/1$ and $M/M/\infty$ queues and to different cases of linear birth--death processes.

\end{abstract}

\section{Introduction}

Birth--death processes form a fundamental subclass of continuous-time Markov chains, characterized by the fact that transitions can occur only between neighboring states. Such processes naturally arise in a wide range of scientific disciplines, including biology, genetics, ecology, physics, mathematical finance, queueing and communication systems, epidemiology, and chemical kinetics. For a comprehensive overview, see \cite{PL}.

\smallskip

The motivation for this work comes from \cite{GdI}, where the authors examine \emph{stochastic} LU-type factorizations of the transition probability matrix of discrete-time birth--death chains on $\mathbb{N}_0$ (also commonly referred to as random walks; see, for instance, \cite{KMc6}). In that framework, since the product of two stochastic matrices is again stochastic, the corresponding \emph{discrete Darboux transformation}, obtained by reversing the order of the factors, also yields a stochastic matrix. This provides a systematic method for constructing families of stochastic matrices with closely related spectral measures, and it also offers a mechanism for simplifying the urn models associated with these processes. This approach has been successfully extended to other settings, including birth--death chains on $\mathbb{Z}$ \cite{dIJ1,dIJ2,dIJ4}, QBD processes \cite{GdI4}, and birth--death chains on a spider \cite{dIJ3}, all within the discrete-time framework.

\smallskip

In this paper, we focus on continuous-time birth--death chains on $\mathbb{N}_0$, commonly referred to as \emph{birth--death processes}. Our analysis begins with their infinitesimal generator $\mathcal{A}$, which has the structure of a tridiagonal or Jacobi matrix (see \eqref{QQmm} below). We consider LU-type factorizations of \(\mathcal{A}\). By this we mean either a lower--upper (LU) or an upper--lower (UL) decomposition, whose differences will be discussed in detail below. In the LU case, our goal is to obtain a 
representation of the form
\[
\mathcal{A} = \widetilde{\mathcal{A}}_L \widetilde{\mathcal{A}}_U,
\]
while in the UL case we consider 
\[
\mathcal{A} = \mathcal{A}_U \mathcal{A}_L.
\]
Here, $\mathcal{A}_L$ and $\widetilde{\mathcal{A}}_L$ denote lower bidiagonal matrices, and $\mathcal{A}_U$ and $\widetilde{\mathcal{A}}_U$ denote upper bidiagonal matrices. In principle, many such factorizations exist, as they typically depend on free parameters. Our focus, however, is on those factorizations whose associated discrete Darboux transformations, namely
\begin{equation*}\label{DDTT}
\widehat{\mathcal{A}} = \widetilde{\mathcal{A}}_U \widetilde{\mathcal{A}}_L
\quad \text{and} \quad
\widetilde{\mathcal{A}} = \mathcal{A}_L \mathcal{A}_U,
\end{equation*}
are \emph{again} infinitesimal generators of some birth--death process.

\smallskip

LU-type factorizations of stochastic matrices have been investigated by several authors \cite{Gr1,Gr2,hey,Vig}. Their significance lies in the fact that such decompositions provide efficient tools for computing invariant measures and, more generally, for analyzing structured Markov chains arising in queueing systems \cite{Vig}. In particular, Heyman \cite{hey} derived a factorization of the form \(I - P = (A - I)(B - S),\) where $P$ is the transition probability matrix of an ergodic discrete-time Markov chain, $A$ is strictly upper triangular, $I$ is the identity matrix, $B$ is strictly lower triangular, and $S$ is diagonal. The entries of $A$ (interpreted as expected values), and those of $B$ and $S$ (interpreted as probabilities), are associated with processes known as \emph{censored Markov chains}. However, in the setting of birth--death processes considered here, such a probabilistic interpretation no longer applies. This is because we work with an infinitesimal generator rather than a transition probability matrix, and, moreover, the Darboux transformation in this framework does not necessarily yield another birth--death process.
\smallskip

Returning to the LU-type factorizations of interest, one might expect a direct extension of the discrete-time setting to the continuous-time framework. However, several features make the continuous-time case considerably more intricate and interesting. The main differences are as follows:

\begin{itemize}[leftmargin=0.25in]
    \item In contrast with the discrete-time case, where the factors were stochastic matrices and their coefficients had a direct probabilistic interpretation as transition probabilities, the continuous-time setting requires a different approach. Here, the factors obtained in LU-type factorizations are not, in general, stochastic matrices. Instead, we focus on those factorizations whose associated Darboux transformations are again infinitesimal generators of certain birth--death processes. In this framework, the entries of the factors acquire a new probabilistic meaning in terms of expected values and probabilities of certain stopping times, as we will see in Section~\ref{Sec3}.
\smallskip
  
    \item While in the discrete-time setting all matrices involved were purely stochastic (nonabsorbing or conservative birth--death chains), here we allow the birth--death process to include an absorbing state (typically denoted by $-1$), accessible from state $0$, leading to nonconservative birth--death processes. This generalization introduces additional complexity, as it requires the inclusion of extra parameters representing the absorption rates of both the original and the Darboux-transformed processes.
\smallskip

    \item From a spectral perspective, a stochastic Darboux transformation in the discrete-time setting modifies the spectral measure only at \(x=0\). Hence, the transformed birth--death chain preserves recurrence, since this property is governed by the behavior of the spectral measure at \(x=1\). In continuous time, by contrast, recurrence or absorption depend on the behavior near \(x=0\). Therefore, the Darboux-transformed measures may change the recurrence or transience of the process (or the certainty of absorption in nonconservative cases). In particular, a transient process may become positive recurrent after a Darboux transformation, and viceversa.
    
\end{itemize}

\smallskip

An important advantage of working with tridiagonal or Jacobi operators such as $\mathcal{A}$ lies in the availability of a well-developed spectral theory involving a spectral measure and an associated family of orthogonal polynomials. Once the spectral measure of a birth--death process is determined, the process can be analyzed through the orthogonal polynomials corresponding to that measure. This approach, originally developed by Karlin and McGregor in the 1950s \cite{KMc2,KMc3}, building upon earlier ideas of Feller and McKean, provides a complete spectral representation of continuous-time birth--death processes. Knowledge of the spectral measure and its orthogonal polynomials leads to the Karlin--McGregor integral representation for transition probabilities and an explicit characterization of the invariant measure (when it exists) in terms of the squared norms of the polynomials. Moreover, spectral techniques enable the analysis of several probabilistic properties such as recurrence, absorption times, first return times, and various limit theorems, not only for the original birth--death process but also for its Darboux-transformed counterpart. For a comprehensive recent monograph on these relations, the reader may consult~\cite{MDIB} (see also \cite{Scho}).

\smallskip

The structure of the paper is as follows. After some preliminaries in 
Section~\ref{sec2}, Section~\ref{Sec3} examines the LU and UL factorizations of the infinitesimal generator \(\mathcal{A}\) associated with a general nonconservative birth--death process. We restrict our attention to those factorizations for which the corresponding Darboux transformations again produce infinitesimal generators of nonconservative birth--death processes. In this context, the LU factorization is unique (up to the choice of absorbing rates, where certain constraints may arise), whereas the UL factorization involves a \emph{free parameter} and admits several scenarios in which, once this parameter is fixed, the Darboux-transformed process remains a birth--death process. These results are summarized in Theorems~\ref{theom33} and~\ref{theom37}. For both factorizations, we provide recursive procedures to compute explicitly the coefficients of the factors, together with a probabilistic interpretation of these quantities. We also analyze the associated spectral properties and the families of orthogonal polynomials that naturally emerge from each factorization. Finally, in Sections~\ref{sec4}, \ref{sec5}, and 
\ref{sec6}, the general theory is illustrated through several classical 
examples, including the \(M/M/1\) queue (Section~\ref{sec4}), the 
\(M/M/\infty\) queue (Section~\ref{sec5}), and various instances of the linear birth--death process (Section~\ref{sec6}).

\section{Preliminaries}\label{sec2}

Let $\{X_t, t\geq0\}$ be an irreducible birth--death process on $\mathbb{N}_0$ with birth--death rates $\{\lambda_n, \mu_n\}$ and infinitesimal generator $\mathcal{A}$, given by the semi-infinite matrix
\begin{equation}\label{QQmm}
\mathcal{A}=
	\begin{pmatrix}
		-(\lambda_0+\mu_0) & \lambda_0 &  &  & \\
		\mu_1  & -(\lambda_1+\mu_1)  & \lambda_1 &  &\\
		 & \mu_2 & -(\lambda_2+\mu_2) & \lambda_2 & \\
		&& \ddots & \ddots & \ddots
	\end{pmatrix}.
\end{equation}
A diagram of the transitions between states is
\begin{center}
\begin{tikzpicture}[
  node distance=2cm,
  state/.style={circle, draw, minimum size=8mm, inner sep=0pt, font=\small},
  >=Stealth, thick
]

% ==== States ====
\node[state] (0) {0};
\node[state] (1) [right of=0] {1};
\node[state] (2) [right of=1] {2};
\node[state] (3) [right of=2] {3};
\node[state] (4) [right of=3] {4};
\node[state] (5) [right of=4] {5};
\node (dots) [right of=5] {\huge$\cdots$};

%\draw[->, loop left, looseness=10] (0) to node {$\mu_0$} (0);

% ==== Forward transitions (lambdas) ====
\foreach \i/\j/\lab in {
  0/1/$\lambda_0$,
  1/2/$\lambda_1$,
  2/3/$\lambda_2$,
  3/4/$\lambda_3$,
  4/5/$\lambda_4$,
  5/dots/$\lambda_5$
}{
  \draw[->, bend left=15] (\i) to node[above] {\lab} (\j);
}

% ==== Backward transitions (mus) ====
\foreach \i/\j/\lab in {
  1/0/$\mu_1$,
  2/1/$\mu_2$,
  3/2/$\mu_3$,
  4/3/$\mu_4$,
  5/4/$\mu_5$,
  dots/5/$\mu_6$
}{
  \draw[->, bend left=15] (\i) to node[below] {\lab} (\j);
}

\end{tikzpicture}
\end{center}
$\mathcal{A}$ is conservative if and only if $\mu_0=0$. In the case where $\mu_0 > 0$, the process allows state $0$ to transition to an absorbing state (commonly denoted by $-1$) with probability $\mu_0 / (\lambda_0 + \mu_0)$. In some cases, we will denote by $\mathcal{A}^{(\alpha)}$ the same process generated by $\mathcal{A}$, but with absorption rate $\alpha\geq0$.  Similarly, the probability measure and the expected value associated with the birth--death process generated by $\mathcal{A}^{(\alpha)}$ will be denoted by $\mathbb{P}^{(\alpha)}$ and $\mathbb{E}^{(\alpha)}$, respectively. When no superscript is indicated, it is understood that $\mathcal{A} = \mathcal{A}^{(\mu_0)}$.

\smallskip

Define the \emph{potential coefficients} $(\pi_n)_{n\in\mathbb{N}_0}$ of the birth--death process as
\begin{equation}\label{potcoef}
\pi_0=1,\quad\pi_n=\frac{\lambda_0\lambda_1\cdots\lambda_{n-1}}{\mu_1\mu_2\cdots\mu_n},\quad n\geq1.
\end{equation}
These potential coefficients satisfy the symmetry conditions
\begin{equation}\label{sympotcoef}
\lambda_n\pi_n=\mu_{n+1}\pi_{n+1},\quad n\geq0.
\end{equation}
If we assume that $\mathcal{A}$ is a closed, symmetric, self-adjoint and negative operator in the Hilbert space $\ell^2_{\pi}(\mathbb{N}_0)$ then, applying the spectral theorem (see \cite{KMc2}), we can obtain an integral representation of the  transition probability functions 
$$
P_{ij}(t)=\mathbb{P}_i\left(X_t=j\right):=\mathbb{P}\left(X_t=j\,|\, X_0=i\right),
$$ 
in terms of a nonnegative measure $\psi(x)$ supported on $[0,\infty)$ and a family of polynomials $(Q_n)_{n\in\mathbb{N}_0}$ satisfying the three-term recurrence relation $-xQ=\mathcal{A}Q$, where $Q=(Q_0, Q_1,\ldots)^T$, with some initial conditions, namely
\begin{equation}\label{QQs}
\begin{split}
Q_{-1}(x)&=0,\;Q_0(x)=1,\\
-xQ_n(x)&=\lambda_n Q_{n+1}(x)-(\lambda_n+\mu_n)Q_n(x)+\mu_nQ_{n-1}(x),\quad n\geq0.
\end{split}
\end{equation}
This integral representation, known as the \emph{Karlin--McGregor formula} (see~\cite{KMc2}), is expressed as
\begin{equation*}\label{3FKMcGN}
P_{ij}(t)
= \pi_j \int_{0}^{\infty} e^{-xt} Q_i(x) Q_j(x)\, d\psi(x),
\quad i,j \in \mathbb{N}_0.
\end{equation*}
From the three-term recurrence relation \eqref{QQs}, we obtain the well-known \emph{Christoffel--Darboux formula}, which, in this setting, takes the form
\begin{equation}\label{CDF}
\sum_{k=0}^n \pi_k Q_k(x) Q_k(y)
= \lambda_n \pi_n \frac{Q_{n+1}(x) Q_n(y) - Q_n(x) Q_{n+1}(y)}{y-x},\quad n\geq0.
\end{equation}
Additionally, in this paper, we will use what is called the sequence of \emph{$0$-th associated polynomials} $(Q_n^{(0)})_{n\in\mathbb{N}_0}$ (or simply associated polynomials, see  \cite[Section 3.3.2]{MDIB})  which is a second linearly independent solution of the three-term recurrence relation \eqref{QQs} but with initial conditions
\begin{equation}\label{iccsqn0}
Q_0^{(0)}(x)=0,\quad Q_1^{(0)}(x)=-\frac{1}{\lambda_0}.
\end{equation}
A useful Christoffel--Darboux formula relating the families $(Q_n)_{n\in\mathbb{N}_0}$ and $(Q_n^{(0)})_{n\in\mathbb{N}_0}$ follows from the three-term recurrence relation~\eqref{QQs}, and is given by
\begin{equation}\label{CDF2}
1 - (x - y)\sum_{k=0}^n \pi_k Q_k(x) Q_k^{(0)}(y)
= \lambda_n \pi_n \big[ Q_{n+1}(x) Q_n^{(0)}(y) - Q_n(x) Q_{n+1}^{(0)}(y) \big],\quad n\geq0.
\end{equation}

\smallskip

Let us introduce the quantities $A$ and $B$, which will play an important role in this paper, by the following series
\begin{equation}\label{quantity_AB}
A=\sum_{n=0}^{\infty} \frac{1}{\lambda_n \pi_n},\quad B=\sum_{n=0}^{\infty}\pi_n.
\end{equation}
%while the quantities $C$ and $D$ will be given by
%\begin{equation}\label{quantity_CD}
%\begin{split}
%C&=\sum_{n=0}^{\infty} \frac{1}{\lambda_n \pi_n}\sum_{m=0}^n \pi_m=\sum_{m=0}^\infty\pi_m\sum_{n=m}^\infty\frac{1}{\lambda_n\pi_n},\\
%D&=\sum_{n=1}^\infty\pi_n\sum_{m=0}^{n-1}\frac{1}{\lambda_m\pi_m}=\sum_{m=0}^\infty\frac{1}{\lambda_n\pi_n}\sum_{n=m+1}^\infty\pi_n.
%\end{split}
%\end{equation}
$A$ and $B$ are closely related with the recurrence and absorption to the state $-1$ of the birth--death process. Denote by
$$
\tau_j=\inf\{t\geq0: X_t=k\},\quad j\in\mathbb{N}_0\cup\{-1\},
$$
the \emph{first passage time} or \emph{hitting time} of the birth--death process to the state $j$. Then,
\begin{itemize}[leftmargin=0.25in]
\item If $\mu_0 = 0$, the birth--death process is \emph{recurrent} if and only if $A = \infty$; otherwise, it is \emph{transient}. In addition, for recurrent processes, it is \emph{positive recurrent} if and only if $B < \infty$; otherwise, it is \emph{null recurrent}.
\item If $\mu_0 > 0$, absorption at $-1$ is \emph{certain}  (that is, $\mathbb{P}_i(\tau_{-1} < \infty) = 1$ for all $i \ge 0$) if and only if $A = \infty$; otherwise, absorption is \emph{transient}. Furthermore, for certain absorption at $-1$, it is \emph{ergodic} (that is, $\mathbb{E}_i[\tau_{-1}]:=\mathbb{E}[\tau_{-1} | X_0=i] < \infty$ for all $i \ge 0$) if and only if $B < \infty$; otherwise, it is \emph{non-ergodic}.
\end{itemize}
These classifications can also be obtained by the behavior of the spectral measure $\psi$ in a neighborhood of $x = 0$ (see \cite[Theorems~3.42 and~3.51]{MDIB}).

\smallskip

Now, let \(h:\mathbb{N}_0 \to (0,\infty)\) be a strictly positive sequence satisfying \((\mathcal{A}h)(i) = 0\) for all \(i \in \mathbb{N}_0\). Then \(h\) is called a \emph{harmonic} sequence with respect to \(\mathcal{A}\). From here, we can define the \emph{Doob $h$-transform} of $\mathcal{A}$ (see \cite[Section 17.6]{LP}) as the operator $\mathcal{A}_h$ defined by
\begin{equation}\label{doobt}
(\mathcal{A}_h f)(i)=\frac{1}{h(i)}\mathcal{A}(h f)(i),
\quad i \in \mathbb{N}_0,
\end{equation}
which corresponds to a new birth--death process with modified transition rates
\[
\lambda_i^{h} = \lambda_i\,\frac{h(i+1)}{h(i)},\; i\geq0,
\quad 
\mu_i^{h} = \mu_i\,\frac{h(i-1)}{h(i)},\; i\geq1,\quad \mu_0^{h}=0.
\]
The process $\{X_t^{h}, t \ge 0\}$ governed by $\mathcal{A}_h$ can be interpreted as the original process $\{X_t,\, t \ge 0\}$ conditioned by the harmonic function $h$. In particular, the function $h$ may represent conditioning on eventual absorption at a remote boundary, or on survival for all time. The Doob $h$-transform is typically employed to convert a transient process into a recurrent one.
\begin{lemma}\label{lem21}
Let $\{X_t, t\geq0\}$ be the birth--death process with infinitesimal generator $\mathcal{A}$ given by \eqref{QQmm} with $\mu_0>0$. For some $n\in\mathbb{N}_0$, let $p(i)=\mathbb{P}_i(\tau_{n}<\tau_{-1}), i=-1,0,\ldots n$. Then the function $p$ is harmonic with respect to $\mathcal{A}$ and can be written as 
$$
p(i)=\mathbb{P}_i(\tau_{n}<\tau_{-1})=\frac{Q_i(0)}{Q_{n}(0)},\quad i=-1,0,\ldots n,
$$
where $Q_n(0)$ is the sequence of polynomials generated by the three-term recurrence relation \eqref{QQs} evaluated at $x=0$.
\end{lemma}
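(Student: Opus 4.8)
The plan is to combine a first-step analysis of the process with the observation that the sequence $(Q_i(0))_i$ solves the same homogeneous recurrence as $p$, the only discrepancy being a normalization at the right endpoint.

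First I would record what $p$ satisfies. Whether $n$ is reached before $-1$ is an event measurable with respect to the embedded jump chain of $\{X_t\}$ --- on $\{\tau_n<\tau_{-1}\}$ the state $n$ is reached after finitely many jumps, so the holding times play no role --- and from a state $i\in\{1,\dots,n-1\}$ the jump chain moves to $i+1$ with probability $\lambda_i/(\lambda_i+\mu_i)$ and to $i-1$ with probability $\mu_i/(\lambda_i+\mu_i)$, while from $0$ it moves to $1$ with probability $\lambda_0/(\lambda_0+\mu_0)$ and to $-1$ with probability $\mu_0/(\lambda_0+\mu_0)$. Hence a one-step decomposition (strong Markov property at the first jump) yields $p(-1)=0$, $p(n)=1$, and
$$
\lambda_i\,p(i+1)-(\lambda_i+\mu_i)\,p(i)+\mu_i\,p(i-1)=0,\qquad i=1,\dots,n-1,
$$
together with $\lambda_0\,p(1)-(\lambda_0+\mu_0)\,p(0)=0$ (using $p(-1)=0$). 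These identities are precisely $(\mathcal{A}p)(i)=0$ for $i=0,1,\dots,n-1$; that is, $p$ is harmonic with respect to $\mathcal{A}$ on the interior states $\{0,\dots,n-1\}$, subject to the boundary values $p(-1)=0$ and $p(n)=1$.

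Next I would evaluate the three-term recurrence \eqref{QQs} at $x=0$: the sequence $(Q_i(0))_i$ obeys $\lambda_iQ_{i+1}(0)-(\lambda_i+\mu_i)Q_i(0)+\mu_iQ_{i-1}(0)=0$ for all $i\ge0$, with $Q_{-1}(0)=0$ and $Q_0(0)=1$. Writing $d_i:=Q_{i+1}(0)-Q_i(0)$, this becomes $\lambda_id_i=\mu_id_{i-1}$ with $d_{-1}=1$; thus $d_0=\mu_0/\lambda_0>0$ --- here the hypothesis $\mu_0>0$ enters --- and $d_i=(\mu_i/\lambda_i)d_{i-1}>0$ for $i\ge1$, so that $Q_i(0)=1+\sum_{j=0}^{i-1}d_j\ge1>0$ for every $i\ge0$. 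In particular $Q_n(0)>0$, which is what makes the quotient in the statement legitimate.

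Finally I would identify $p$ with a scalar multiple of $(Q_i(0))_i$ by forward induction. Both sequences vanish at $i=-1$, and on $i=0,1,\dots,n-1$ both satisfy the same second-order linear recurrence, whose leading coefficient $\lambda_i$ never vanishes; therefore each step expresses the next value through the two preceding ones in exactly the same way, and $p(i)=p(0)\,Q_i(0)$ for $i=-1,0,\dots,n$. Evaluating at $i=n$ gives $1=p(n)=p(0)\,Q_n(0)$, so $p(0)=1/Q_n(0)$ and hence $p(i)=Q_i(0)/Q_n(0)$, as claimed. The algebraic rearrangements are routine; the step I expect to need the most care is the strict positivity of $Q_n(0)$, which genuinely requires $\mu_0>0$ (when $\mu_0=0$ one simply has $Q_i(0)\equiv1$), together with the minor verification that $\{\tau_n<\tau_{-1}\}$ is a jump-chain event so that first-step analysis applies verbatim.
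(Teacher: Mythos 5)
Your proof is correct and follows essentially the same route as the paper: first-step analysis via the embedded jump chain yields the harmonic boundary-value problem, and the identification with $Q_i(0)/Q_n(0)$ is obtained by noting that $(Q_i(0))_i$ solves the same recurrence with $Q_{-1}(0)=0$ and invoking uniqueness by forward induction, which is exactly the alternative argument the paper sketches in its final sentence (the paper's main route instead solves the system explicitly and cites the closed form \eqref{Qn0}). Your explicit verification that $\mu_0>0$ forces $Q_n(0)>0$, so the quotient is well defined, is a detail the paper leaves implicit.
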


\begin{proof}
For $i = -1, 0, \ldots, n$, consider the function $p(i) = \mathbb{P}_i(\tau_{n} < \tau_{-1})$, which satisfies the discrete harmonic relation
\begin{equation*}
\begin{split}
p(i) &= \frac{\lambda_i}{\lambda_i + \mu_i} \, p(i+1)
       + \frac{\mu_i}{\lambda_i + \mu_i} \, p(i-1),\quad i=0,1,\ldots,n-1,\\
p(-&1) = 0, \quad p(n) = 1.
\end{split}
\end{equation*}
Hence, $(\mathcal{A}p)(i) = 0$ for all $i \in \mathbb{N}_0$ (with $p(i) = 1$ for $i > n$), so $p$ is harmonic. Solving this system with the given boundary conditions yields
\[
p(i) = 
\frac{1 + \mu_0 \displaystyle\sum_{k=0}^{i-1} \frac{1}{\lambda_k \pi_k}}
     {1 + \mu_0 \displaystyle\sum_{k=0}^{n-1} \frac{1}{\lambda_k \pi_k}},
\quad i = 0, 1, \ldots, n-1.
\]
From \cite[Formula~(3.42)]{MDIB}, we know that
\begin{equation}\label{Qn0}
Q_n(0) = 1 + \mu_0 \sum_{k=0}^{n-1} \frac{1}{\lambda_k \pi_k}=\mu_0\sum_{k=0}^n\frac{1}{\mu_k\pi_k}, 
\quad n \ge 0.
\end{equation}
Therefore, the result follows. Alternatively, evaluating $x = 0$ in~\eqref{QQs}, one immediately observes that $Q_i(0)/Q_n(0)$ is a harmonic function.
\end{proof}
From the harmonic function $p$ defined in Lemma~\ref{lem21}, we construct the Doob $h$-transform of $\mathcal{A}$, which yields a new birth--death process $\{X_t^{p}, t \ge 0\}$ governed by the operator $\mathcal{A}_p$ with modified transition rates
\begin{equation}\label{lamuh}
\lambda_i^{p} = \lambda_i \, \frac{Q_{i+1}(0)}{Q_i(0)}, \;i\geq0,
\quad 
\mu_i^{p} = \mu_i \, \frac{Q_{i-1}(0)}{Q_i(0)}, \; i\geq1,
\quad 
\mu_0^{p} = 0.
\end{equation}
The process $\{X_t^{p} : t \ge 0\}$ evolves as the original process $\{X_t, t \ge 0\}$ \emph{conditioned on never being absorbed}. This Doob $h$-transform (also known as the \emph{Derman-Vere-Jones transformation} in the context of birth--death processes) has the following properties (see \cite[Proposition 3.18]{MDIB}):
\begin{enumerate}[leftmargin=0.3in]
\item The spectral measure for $\mathcal{A}_p$ is the same as the spectral measure for $\mathcal{A}$, that it, $\psi$.
\item If $P_{ij}^{p}(t)$ is the transition function for $\mathcal{A}_p$, then we have
$$
P_{ij}(t)Q_j(0)=Q_i(0)P_{ij}^{p}(t),\quad i,j\in\mathbb{N}_0.
$$
\item The potential coefficients $\pi_{j}^{p}$ for $\mathcal{A}_p$ are given by
\begin{equation}\label{dhtpc}
\pi_{j}^{p}=\pi_jQ_j^2(0),\quad j\in\mathbb{N}_0.
\end{equation}
\end{enumerate}

\begin{proposition}\label{prop22}
Let $\{X_t, t\geq0\}$ be the birth--death process with infinitesimal generator $\mathcal{A}$ given by \eqref{QQmm} with $\mu_0>0$. Then we have
\begin{equation}\label{endcod}
\mathbb{E}_n[\tau_{n+1} | \tau_{n+1}<\tau_{-1}]=\frac{1}{\lambda_n\pi_nQ_n(0)Q_{n+1}(0)}\sum_{k=0}^n\pi_kQ_k^2(0).
\end{equation}
\end{proposition}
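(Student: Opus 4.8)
The plan is to identify the process conditioned on $\{\tau_{n+1}<\tau_{-1}\}$ with a Doob $h$-transform of $\mathcal{A}$, reduce the conditioned expectation to an ordinary upward first-passage time, and then evaluate the latter by a first-step argument. For the first step, fix $n$ and consider the function $p(i)=\mathbb{P}_i(\tau_{n+1}<\tau_{-1})$, $i=-1,0,\dots,n+1$, which by Lemma~\ref{lem21} (applied with reference state $n+1$) equals $Q_i(0)/Q_{n+1}(0)$, is harmonic for the process killed at $-1$ and at $n+1$, and satisfies $p(-1)=0$ and $p(i)>0$ for $i\ge0$ (recall $Q_i(0)\ge1$ by \eqref{Qn0}). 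By the conditioning interpretation of Doob's $h$-transform, the original process started at $n$ and observed up to $\tau_{n+1}$, conditioned on $\{\tau_{n+1}<\tau_{-1}\}$, is a birth--death process on $\{0,\dots,n+1\}$ with $n+1$ absorbing whose rates on $\{0,\dots,n\}$ are those of $\mathcal{A}$ multiplied by the corresponding ratios of $p$; the normalizing constant $Q_{n+1}(0)$ cancels in these ratios, so they coincide with $\lambda_i^{p},\mu_i^{p}$ of \eqref{lamuh} (in particular the $0\to-1$ rate becomes $\mu_0^{p}=0$). Since $\tau_{n+1}$ started from $n$ depends only on the rates on $\{0,\dots,n+1\}$, this gives
\[
\mathbb{E}_n[\tau_{n+1}\mid\tau_{n+1}<\tau_{-1}]=\mathbb{E}_n^{p}[\tau_{n+1}],
\]
the expected first-passage time from $n$ to $n+1$ for the process governed by $\mathcal{A}_p$.

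Next I would evaluate the right-hand side using that $\mathcal{A}_p$ is conservative. Writing $m_k:=\mathbb{E}_k^{p}[\tau_{k+1}]$ and decomposing on the first jump of the $p$-process, one gets $m_0=1/\lambda_0^{p}$ (since $\mu_0^{p}=0$) and $\lambda_k^{p}m_k=1+\mu_k^{p}m_{k-1}$ for $k\ge1$. Multiplying by $\pi_k^{p}$ and using the symmetry $\mu_k^{p}\pi_k^{p}=\lambda_{k-1}^{p}\pi_{k-1}^{p}$ (the analogue of \eqref{sympotcoef} for the $p$-process, consistent with \eqref{dhtpc}), the sequence $w_k:=\lambda_k^{p}\pi_k^{p}m_k$ satisfies $w_k=\pi_k^{p}+w_{k-1}$ with $w_0=\pi_0^{p}$; telescoping yields
\[
\mathbb{E}_n^{p}[\tau_{n+1}]=\frac{1}{\lambda_n^{p}\pi_n^{p}}\sum_{k=0}^{n}\pi_k^{p}.
\]

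It then remains to substitute the explicit $p$-data. By \eqref{dhtpc}, $\pi_k^{p}=\pi_kQ_k^2(0)$, and by \eqref{lamuh}, $\lambda_n^{p}=\lambda_nQ_{n+1}(0)/Q_n(0)$, so $\lambda_n^{p}\pi_n^{p}=\lambda_n\pi_nQ_n(0)Q_{n+1}(0)$. Plugging these into the last display gives precisely \eqref{endcod}.

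The delicate point is the first step: one has to argue carefully that conditioning on the (positive-probability) event $\{\tau_{n+1}<\tau_{-1}\}$ produces exactly the rates \eqref{lamuh}, and that the resulting identity with $\mathbb{E}_n^{p}[\tau_{n+1}]$ is insensitive to the behaviour of $\mathcal{A}_p$ above level $n+1$ (a localization remark, since an upward first passage never sees states beyond the target). A computational alternative that avoids this is to set $G(i):=\mathbb{E}_i[\tau_{n+1}\mathbf{1}_{\{\tau_{n+1}<\tau_{-1}\}}]$, show by a first-step decomposition that $G$ solves $(\mathcal{A}G)(i)=-Q_i(0)/Q_{n+1}(0)$ on $\{0,\dots,n\}$ with $G(-1)=G(n+1)=0$, solve this inhomogeneous three-term recurrence by variation of parameters from the two solutions $Q_\cdot(0)$ and $Q_\cdot^{(0)}(0)$ of \eqref{QQs}--\eqref{iccsqn0}, simplify the resulting sum with the Christoffel--Darboux formula \eqref{CDF}, and finally divide by $\mathbb{P}_n(\tau_{n+1}<\tau_{-1})=Q_n(0)/Q_{n+1}(0)$.
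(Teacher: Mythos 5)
Your argument is correct and takes essentially the same route as the paper: the paper's proof simply applies the standard upward first-passage formula \eqref{Expx} (cited from Anderson) to the Doob $h$-transformed process $\mathcal{A}_p$ with rates \eqref{lamuh} and potential coefficients \eqref{dhtpc}, which is exactly your second and third steps. The only differences are that you derive \eqref{Expx} by a first-step/telescoping argument instead of citing it, and you make explicit the identification of the conditioned process with the $h$-transform (including the localization remark), which the paper leaves implicit.
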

\begin{proof}
It is a direct consequence of the formula 
\begin{equation}\label{Expx}
\mathbb{E}_n[\tau_{n+1}] = \frac{1}{\lambda_n \pi_n} \sum_{k=0}^n \pi_k, 
%\quad 
%\mathbb{E}_n[\tau_{n-1}] = \frac{1}{\mu_n \pi_n} \sum_{m=n}^{\infty} \pi_m,
\end{equation}
which can be found in \cite[Section 8.1]{And}, applied to the context of the new birth--death process generated by $\mathcal{A}_p$ with birth--death rates \eqref{lamuh} and potential coefficients \eqref{dhtpc}.
\end{proof}

\begin{lemma}\label{lem23}
Let $\{X_t, t\geq0\}$ be the birth--death process with infinitesimal generator $\mathcal{A}$ given by \eqref{QQmm} with $\mu_0=0$. Let $q(i)=\mathbb{P}_i(\tau_{0}<\infty), i\in\mathbb{N}_0,$ denote the extinction probability starting at some state $i\in\mathbb{N}_0$. Then the function $q$ is harmonic with respect to $\mathcal{A}$ and can be written as 
\begin{equation}\label{qqprob}
q(i)=\mathbb{P}_i(\tau_{0}<\infty)=\frac{\D\sum_{k=i}^\infty\dfrac{1}{\lambda_k\pi_k}}{\D\sum_{k=0}^\infty\dfrac{1}{\lambda_k\pi_k}},\quad i\in\mathbb{N}_0.
\end{equation}
\end{lemma}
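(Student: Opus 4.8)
The plan is to follow the same route as in Lemma~\ref{lem21}: extract from a first-jump decomposition the linear recursion satisfied by $q$, solve it explicitly with the help of the symmetry relation \eqref{sympotcoef}, and then pin down the single free constant by a probabilistic argument. First I would condition on the first jump from a state $i\ge 1$; since $\mu_0=0$ this gives
\[
q(i)=\frac{\lambda_i}{\lambda_i+\mu_i}\,q(i+1)+\frac{\mu_i}{\lambda_i+\mu_i}\,q(i-1),\qquad i\ge 1,
\]
together with the boundary value $q(0)=1$ (the chain started at $0$ has $\tau_0=0$). Multiplying through by $\lambda_i+\mu_i$ and rearranging yields $(\mathcal{A}q)(i)=0$ for every $i\ge 1$, which is the asserted harmonicity (state $0$ playing the role of the target, so that $q(0)=1$ is a prescribed boundary value); also $0\le q(i)\le 1$.

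Next I would solve this recursion. Rewriting the interior equation as $\mu_i\bigl(q(i-1)-q(i)\bigr)=\lambda_i\bigl(q(i)-q(i+1)\bigr)$ and setting $d_i=q(i-1)-q(i)$ for $i\ge1$, one gets $d_{i+1}=(\mu_i/\lambda_i)\,d_i$; an elementary induction combined with $\lambda_{i-1}\pi_{i-1}=\mu_i\pi_i$ from \eqref{sympotcoef} gives $d_i=C/(\lambda_{i-1}\pi_{i-1})$ for all $i\ge1$, where $C=\lambda_0\bigl(1-q(1)\bigr)\ge 0$. Telescoping from $q(0)=1$ produces the one-parameter family
\[
q(i)=1-C\sum_{k=0}^{i-1}\frac{1}{\lambda_k\pi_k},\qquad i\in\mathbb{N}_0,
\]
and requiring $q(i)\ge 0$ for all $i$ forces $0\le C\le 1/A$, with $A$ as in \eqref{quantity_AB} (under the convention $1/A=0$ when $A=\infty$).

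The hard part is to show that the extreme value $C=1/A$ is attained, which is exactly formula \eqref{qqprob}. This is where one must use that $q$ is a hitting probability and not merely some nonnegative harmonic function. The quickest route is to invoke the general principle that $q(\cdot)=\mathbb{P}_\cdot(\tau_0<\infty)$ is the \emph{minimal} nonnegative solution of the system in the first paragraph (see, e.g., \cite{And}); among the admissible profiles above, the pointwise smallest on $\{1,2,\dots\}$ is the one with $C=1/A$. A self-contained alternative is a truncation argument: absorb the chain at a large level $N$ and let $r_N(i)=\mathbb{P}_i(\tau_0<\tau_N)$; the same computation applied to the finite system $r_N(0)=1$, $r_N(N)=0$ gives $r_N(i)=\bigl(\sum_{k=i}^{N-1}\tfrac{1}{\lambda_k\pi_k}\bigr)\big/\bigl(\sum_{k=0}^{N-1}\tfrac{1}{\lambda_k\pi_k}\bigr)$, and since $\{\tau_0<\tau_N\}\uparrow\{\tau_0<\infty\}$ as $N\to\infty$ (a path that ever reaches $0$ does so after visiting only finitely many states, the chain being non-explosive under the standing assumptions), monotone convergence yields $q(i)=\lim_{N\to\infty}r_N(i)$, which is precisely \eqref{qqprob}---and equals $1$ for every $i$ when $A=\infty$, i.e. in the recurrent case. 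Either way, the only genuinely delicate point is this last step, namely justifying that the free constant takes its extreme value, equivalently that $q(i)\to 0$ as $i\to\infty$ in the transient case.
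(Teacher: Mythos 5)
Your argument is correct and follows essentially the same route as the paper, whose proof is only a sketch: it mimics Lemma~\ref{lem21} with the boundary condition $q(0)=1$ and defers the determination of the solution to \cite[Example~1.3.4]{Norr}, which is precisely the minimal-nonnegative-solution (equivalently, truncation at level $N$ and monotone convergence) argument you spell out. You simply supply in full the delicate step--fixing the constant at $C=1/A$, so that $q\equiv 1$ when $A=\infty$ and $q(i)\to 0$ when $A<\infty$--that the paper outsources to the reference.
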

\begin{proof}
The proof follows a similar reasoning to that of Lemma~\ref{lem21}, but with the initial conditions $q(0)=1$ and $\limsup_{n\to\infty} q(n)=1$. For further details, see \cite[Example~1.3.4]{Norr}.
\end{proof}

As before, since $q$ is harmonic, we can apply the Doob $h$-transform to $\mathcal{A}$ and produce a new birth--death process $\{X_t^{q},t\geq0\}$, which evolves as the original process $\{X_t,t\geq0\}$ conditioned to hitting the state 0 at a finite time, i.e. $\tau_0<\infty$. Proceeding as before we obtain the following result.
\begin{proposition}
Let $\{X_t, t\geq0\}$ be the birth--death process with infinitesimal generator $\mathcal{A}$ given by \eqref{QQmm} with $\mu_0=0$. Then we have
\begin{equation}\label{endcod2}
\mathbb{E}_n[\tau_{n+1} | \tau_{0}<\infty]=\frac{1}{\lambda_n\pi_nq(n)q(n+1)}\sum_{k=0}^n\pi_kq^2(k),
\end{equation}
where $q$ is given by \eqref{qqprob}.
\end{proposition}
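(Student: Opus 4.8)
The plan is to mirror the proof of Proposition~\ref{prop22}, replacing the harmonic function $p$ used there by the function $q$ of Lemma~\ref{lem23}, and then applying the expected up-crossing formula~\eqref{Expx} to the resulting Doob-transformed process. First I would record the data of that transform: since $q$ is harmonic with respect to $\mathcal{A}$ (Lemma~\ref{lem23}), formula~\eqref{doobt} with $h=q$ yields the birth--death process $\{X_t^{q},\,t\geq0\}$ (the process $\{X_t\}$ conditioned on $\{\tau_0<\infty\}$), whose transition rates are the analogue of~\eqref{lamuh},
\[
\lambda_i^{q}=\lambda_i\,\frac{q(i+1)}{q(i)},\quad i\geq0,\qquad
\mu_i^{q}=\mu_i\,\frac{q(i-1)}{q(i)},\quad i\geq1,\qquad \mu_0^{q}=0 .
\]
Just as in the derivation of~\eqref{dhtpc}, the products $\lambda_0^{q}\cdots\lambda_{n-1}^{q}$ and $\mu_1^{q}\cdots\mu_n^{q}$ telescope and, using $q(0)=1$, the potential coefficients of $\{X_t^{q}\}$ are $\pi_n^{q}=\pi_n\,q^2(n)$ for all $n\in\mathbb{N}_0$.

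Next I would argue that the expected time for $\{X_t^{q}\}$ to move from $n$ to $n+1$ equals $\mathbb{E}_n[\tau_{n+1}\mid\tau_0<\infty]$ for the original process. This is the analogue, for the present transform, of the identity $\mathbb{E}_n^{p}[\tau_{n+1}]=\mathbb{E}_n[\tau_{n+1}\mid\tau_{n+1}<\tau_{-1}]$ that is implicit in Proposition~\ref{prop22}: one restricts the change of measure $d\mathbb{P}_n^{q}/d\mathbb{P}_n$ to $\mathcal{F}_{\tau_{n+1}}$, where it reduces to a multiple of $q(n+1)/q(n)$, and identifies the resulting normalization with the appropriate probability attached to $\{\tau_0<\infty\}$, so that the reweighting becomes the stated conditioning.

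With this identification in hand, it then remains only to insert the data of the first step into~\eqref{Expx} written for $\{X_t^{q}\}$:
\begin{align*}
\mathbb{E}_n[\tau_{n+1}\mid\tau_0<\infty]=\mathbb{E}_n^{q}[\tau_{n+1}]
&=\frac{1}{\lambda_n^{q}\,\pi_n^{q}}\sum_{k=0}^n\pi_k^{q}
=\frac{1}{\lambda_n\,\dfrac{q(n+1)}{q(n)}\,\pi_n q^2(n)}\sum_{k=0}^n\pi_k q^2(k)\\
&=\frac{1}{\lambda_n\pi_n\,q(n)q(n+1)}\sum_{k=0}^n\pi_k q^2(k),
\end{align*}
which is precisely~\eqref{endcod2}.

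I expect the main obstacle to be the second step, that is, making the probabilistic identification rigorous and checking that~\eqref{Expx} genuinely applies to $\{X_t^{q}\}$. In contrast with the $p$-transform of Proposition~\ref{prop22}, which is conservative and amounts to conditioning on non-absorption, here one conditions on \emph{hitting} the state $0$, so state $0$ must be treated with care: at $0$, $q$ should be read as a boundary (excessive) datum rather than an interior harmonic value, and the behaviour of the transformed dynamics there has to be pinned down correctly. Once that point is settled, the first and third steps are routine.
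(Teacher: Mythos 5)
Your proposal is exactly the paper's argument: the paper proves this proposition by the same route as Proposition~\ref{prop22}, namely applying the Doob $h$-transform with $h=q$, noting that the transformed process has potential coefficients $\pi_n q^2(n)$ and is interpreted as the original process conditioned on $\{\tau_0<\infty\}$, and then invoking formula~\eqref{Expx}. The boundary subtlety at state $0$ that you flag is simply not discussed in the paper, so your treatment is, if anything, slightly more careful than the original.
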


\smallskip

Consider now the same birth--death process $\{X_t, t \ge 0\}$, but defined on the truncated state space $\mathcal{S}_n = \{-1, 0, \ldots, n\}$. The corresponding infinitesimal generator is obtained by taking the first $n+1$ rows and columns of $\mathcal{A}$ in~\eqref{QQmm} and setting $\lambda_n = 0$. For any $n \in \mathbb{N}_0$, define
\begin{equation*}\label{occtime}
T_n^{(j)} := \int_{0}^{\tau_j} \mathbf{1}_{\{ X_s = n \}}\, ds, \quad j \in \mathcal{S}_n,
\end{equation*}
to be the \emph{occupation time} of state~$n$ before hitting state~$j$, where $\mathbf{1}_A$ is the indicator function; that is, $T_n^{(j)}$ represents the total time the process spends in state~$n$ prior to its first visit to state~$j$.

\begin{proposition}\label{prop23}
Let $\{X_t, t \ge 0\}$ be a birth--death process on $\mathcal{S}_n = \{-1, 0, \ldots, n\}$ with $\mu_0 > 0$, defined by the truncated infinitesimal generator $\mathcal{A}$ in~\eqref{QQmm}, where $\lambda_n = 0$. Then the expected occupation time of state~$n$ before absorption at~$-1$, starting from any state $i \in \mathcal{S}_n$, is given by
\begin{equation}\label{expocctime}
\mathbb{E}_i\left[T_n^{(-1)}\right] = \frac{\pi_n Q_i(0)}{\mu_0}=\pi_n\sum_{k=0}^i\frac{1}{\mu_k\pi_k}, \quad i \in \mathcal{S}_n.
\end{equation}
\end{proposition}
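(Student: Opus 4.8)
The plan is to reduce \eqref{expocctime} to two ingredients already available: the harmonic function $p(i)=\mathbb{P}_i(\tau_n<\tau_{-1})=Q_i(0)/Q_n(0)$ from Lemma~\ref{lem21}, and an elementary excursion argument at state $n$. First I would note that the second equality in \eqref{expocctime} is immediate from \eqref{Qn0}, so it suffices to prove $\mathbb{E}_i[T_n^{(-1)}]=\pi_n Q_i(0)/\mu_0$. One should also observe that the quantities $Q_0(0),\dots,Q_n(0)$ and $\pi_0,\dots,\pi_n$ on the right-hand side depend only on the rates $\lambda_0,\dots,\lambda_{n-1}$ and $\mu_0,\dots,\mu_n$, hence are unaffected by imposing $\lambda_n=0$; likewise $\mathbb{P}_i(\tau_n<\tau_{-1})$ for $i\le n$ is the same for the truncated and non-truncated processes, so Lemma~\ref{lem21} applies verbatim here.

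Second, a strong Markov argument at the stopping time $\tau_n\wedge\tau_{-1}$ gives a multiplicative decomposition. For $i\le n$, before hitting $n$ or $-1$ the process stays in $\{0,\dots,n-1\}$ and thus spends no time at $n$ up to that moment; on $\{\tau_{-1}<\tau_n\}$ it is absorbed having never visited $n$, while on $\{\tau_n<\tau_{-1}\}$ the post-$\tau_n$ occupation time of $n$ has, by the strong Markov property, the law of $T_n^{(-1)}$ under $\mathbb{P}_n$. Hence
\[
\mathbb{E}_i\!\left[T_n^{(-1)}\right]=\mathbb{P}_i(\tau_n<\tau_{-1})\,\mathbb{E}_n\!\left[T_n^{(-1)}\right]=\frac{Q_i(0)}{Q_n(0)}\,\mathbb{E}_n\!\left[T_n^{(-1)}\right].
\]
It then remains to compute $\mathbb{E}_n[T_n^{(-1)}]$. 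Since $\lambda_n=0$, each sojourn at $n$ lasts an independent $\mathrm{Exp}(\mu_n)$ time and is followed by a jump to $n-1$; from $n-1$ the process returns to $n$ with probability $p(n-1)=Q_{n-1}(0)/Q_n(0)$ and is otherwise absorbed at $-1$, independently across excursions by the strong Markov property. Thus the number of visits to $n$ before absorption is geometric with mean $1/(1-p(n-1))$, and by Wald's identity
\[
\mathbb{E}_n\!\left[T_n^{(-1)}\right]=\frac{1}{\mu_n\,(1-p(n-1))}=\frac{Q_n(0)}{\mu_n\,\bigl(Q_n(0)-Q_{n-1}(0)\bigr)}.
\]
Finally, from \eqref{Qn0} one has $Q_n(0)-Q_{n-1}(0)=\mu_0/(\lambda_{n-1}\pi_{n-1})$, which by \eqref{sympotcoef} equals $\mu_0/(\mu_n\pi_n)$; substituting gives $\mathbb{E}_n[T_n^{(-1)}]=\pi_n Q_n(0)/\mu_0$, and combining with the decomposition above yields \eqref{expocctime}. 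The boundary case $n=0$ is covered directly, since then $p(-1)=0$ and the process is absorbed after a single $\mathrm{Exp}(\mu_0)$ sojourn.

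The main obstacle I anticipate is not any single computation but making the two strong Markov reductions airtight — in particular verifying that the successive excursions from $n$ are genuinely i.i.d., that $N$ is a stopping time for the embedded structure so Wald applies, and that $\mathbb{E}_n[T_n^{(-1)}]<\infty$ (which holds because $\mu_0>0$ forces $p(n-1)<1$). An alternative, purely algebraic route that sidesteps these probabilistic subtleties is to characterize $u(i):=\mathbb{E}_i[T_n^{(-1)}]$ as the unique solution of the linear system $(\mathcal{A}u)(i)=-\delta_{i,n}$ for $0\le i\le n$ with $u(-1)=0$ — the sub-generator obtained by deleting row and column $-1$ being invertible precisely because $\mu_0>0$ — and then to verify directly, using the three-term recurrence \eqref{QQs} at $x=0$ together with \eqref{Qn0} and \eqref{sympotcoef}, that $u(i)=\pi_n Q_i(0)/\mu_0$ satisfies every equation of the system.
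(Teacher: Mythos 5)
Your argument is correct, but it follows a genuinely different route from the paper. The paper characterizes $u(i)=\mathbb{E}_i[T_n^{(-1)}]$ via Dynkin's formula as the solution of $(\mathcal{A}u)(i)=-\mathbf{1}_{\{i=n\}}$, $u(-1)=0$, and then solves that linear system directly by telescoping the differences $u(i)-u(i-1)$ — essentially the ``purely algebraic alternative'' you sketch at the end. You instead combine the strong Markov property with Lemma~\ref{lem21}: the factorization $\mathbb{E}_i[T_n^{(-1)}]=\mathbb{P}_i(\tau_n<\tau_{-1})\,\mathbb{E}_n[T_n^{(-1)}]$ (valid because no time is spent at $n$ before $\tau_n\wedge\tau_{-1}$), the observation that $\lambda_n=0$ makes the visits to $n$ a geometric number of independent $\mathrm{Exp}(\mu_n)$ sojourns with return probability $p(n-1)=Q_{n-1}(0)/Q_n(0)$, and the identities $Q_n(0)-Q_{n-1}(0)=\mu_0/(\lambda_{n-1}\pi_{n-1})=\mu_0/(\mu_n\pi_n)$ from \eqref{Qn0} and \eqref{sympotcoef}. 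Your preliminary remarks are exactly the right ones to make the reduction legitimate: the quantities $Q_i(0)$, $\pi_i$ and the hitting probabilities for $i\le n$ do not see $\lambda_n$, so Lemma~\ref{lem21} transfers to the truncated chain; and since the truncated chain has finitely many states with $\mu_0>0$, excursions from $n-1$ terminate almost surely at $n$ or $-1$, so the geometric count and the independence of holding times from the embedded jump outcomes (which makes Wald, or even plain conditioning, applicable) are unproblematic. What each approach buys: yours reuses the already-established harmonic-function formula and yields a transparent probabilistic reading of the answer as (reaching probability)$\times$(mean total sojourn at $n$), at the cost of the strong-Markov bookkeeping you flag; the paper's computation is self-contained, avoids any excursion structure, and generalizes immediately to occupation times of other states or other boundary conditions (as in the Remark following the Proposition, where the same telescoping handles the $\mu_0=0$, absorption-at-$0$ variant).
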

\begin{proof}
Let $u(i) = \mathbb{E}_i[T_n^{(-1)}]$. By a direct application of Dynkin's formula to the function~$u$ (see \cite[Section 7.4]{Oks}), we obtain that $u$ satisfies the following system of equations:
\begin{equation*}
\begin{split}
(\mathcal{A}u)(i) &= -\mathbf{1}_{\{i= n\}}, \quad i=0,1,\ldots,n,\\ u(-1)& = 0.
\end{split}
\end{equation*}
Expanding these relations and recalling that $\lambda_n = 0$, we have
\begin{align*}
\lambda_i\big(u(i+1) - u(i)\big) + \mu_i\big(u(i-1) - u(i)\big) &= 0, \quad i = 0, 1, \ldots, n-1,\\
\mu_n\big(u(n-1) - u(n)\big) &= -1, \quad u(-1) = 0.
\end{align*}
Set $s_i = u(i) - u(i-1)$, so that $u(i) = \sum_{k=0}^i s_k$. From the last equation we obtain $s_n = 1 / \mu_n$. Substituting this into the recurrence and telescoping yields (see~\eqref{potcoef})
\[
s_{i+1} = \frac{\mu_i}{\lambda_i}s_i 
= \cdots 
= \frac{\mu_i \cdots \mu_0}{\lambda_i \cdots \lambda_0}s_0
= \frac{\mu_0 s_0}{\lambda_i \pi_i}.
\]
Using $s_n = 1 / \mu_n$ and the symmetry relation~\eqref{sympotcoef}, we obtain $s_0 = \pi_n / \mu_0$. Hence,
\[
s_i = \frac{\pi_n}{\mu_i \pi_i}
\quad \Longrightarrow \quad
u(i) = \pi_n \sum_{k=0}^i \frac{1}{\mu_k \pi_k},
\quad i = 0, 1, \ldots, n.
\]
Finally, using the definition of $Q_i(0)$ in~\eqref{Qn0} and again \eqref{sympotcoef}, we readily recover~\eqref{expocctime}.
\end{proof}

\begin{remark}
Proceeding as in Proposition~\ref{prop23}, but now considering the state space $\{0, 1, \ldots, n\}$ with $\mu_0 = 0$, we obtain
\begin{equation*}\label{expocctime0}
\mathbb{E}_i\left[T_n^{(0)}\right]
= \pi_n \sum_{k=0}^{i-1} \frac{1}{\lambda_k \pi_k} 
= \pi_n \sum_{k=1}^{i} \frac{1}{\mu_k \pi_k},
\quad i = 0, 1, \ldots, n.
\end{equation*}
Using the property of the $0$-th associated polynomials (see~\cite[Section~3.3.2]{MDIB}),
\begin{equation}\label{Qn00}
Q_i^{(0)}(0) = -\sum_{k=0}^{i-1} \frac{1}{\lambda_k \pi_k}, 
\quad i \ge 1,
\end{equation}
it follows that $\mathbb{E}_i\left[T_n^{(0)}\right]$ can equivalently be expressed as
\begin{equation}\label{expocctime00}
\mathbb{E}_i\left[T_n^{(0)}\right] = -\pi_n Q_i^{(0)}(0), 
\quad i = 0, 1, \ldots, n.
\end{equation}
\end{remark}

%\begin{remark}
%Let $N_n^{(-1)} = \#\{ k \ge 0 : \tau_k < \tau_{-1},\, X_{\tau_k} = n \}$ denote the number of visits to state~$n$ before absorption. Noting that the mean holding time in state~$n$ is $1 / \mu_n$ (for $\mathcal{S}_n = \{-1, 0, \ldots, n\}$ with $\mu_0 > 0$), we obtain
%$$
%\mathbb{E}_i\left[T_n^{(-1)}\right]=\frac{1}{\mu_n}\mathbb{E}_i\left[N_n^{(-1)}\right].
%$$
%If $\mu_0=0$ then
%$$
%\mathbb{E}_i\left[T_n^{(0)}\right]=\frac{1}{\mu_n}\mathbb{E}_i\left[N_n^{(0)}\right].
%$$
%\end{remark}

\section{LU and UL factorizations and the Darboux transformation}\label{Sec3}

In this section, we consider LU and UL factorizations of the infinitesimal generator $\mathcal{A}$ in~\eqref{QQmm}, namely
\[
\mathcal{A}=\widetilde{\mathcal{A}}_L\widetilde{\mathcal{A}}_U, 
\quad 
\mathcal{A}=\mathcal{A}_U\mathcal{A}_L,
\]
where $\mathcal{A}_L$ and $\widetilde{\mathcal{A}}_L$ are lower bidiagonal matrices, and $\mathcal{A}_U$ and $\widetilde{\mathcal{A}}_U$ are upper bidiagonal matrices. From the structure of the infinitesimal generator $\mathcal{A}$ in~\eqref{QQmm} we have that $\mathcal{A}\bm e=-\mu_0e^{(0)}$, where $\bm e$ denotes the semi-infinite vector of all ones and $e^{(0)}=(1,0,0,\ldots)$ is the first canonical semi-infinite vector. In principle, there are many possible factorizations of this type, as they always depend on free parameters. Our focus, however, is on those factorizations whose associated Darboux transformations, namely
\[
\widehat{\mathcal{A}}=\widetilde{\mathcal{A}}_U\widetilde{\mathcal{A}}_L, 
\quad 
\widetilde{\mathcal{A}}=\mathcal{A}_L\mathcal{A}_U,
\]
are \emph{again} infinitesimal generators of some birth--death process, that is, they have positive off-diagonal birth--death rates and they satisfy $\widehat{\mathcal{A}}\bm e=-\hat{\mu}_0e^{(0)}$ and $\widetilde{\mathcal{A}}\bm e=-\tilde{\mu}_0e^{(0)}$, for some $\hat{\mu}_0,\tilde{\mu}_0\geq0$. In addition, we will be interested in providing a probabilistic interpretation of the entries of the corresponding factors. We begin with the LU factorization, which in this setting is unique, and then proceed to the UL factorization, where one free parameter appears.

\subsection{LU factorization}\label{subsec31}

Consider $\mathcal{A}=\widetilde{\mathcal{A}}_L\widetilde{\mathcal{A}}_U$, where

\begin{equation}\label{AL_AU}
\widetilde{\mathcal{A}}_L=
		\begin{pmatrix}
			\tilde{s}_0 &  &  &  \\
			\tilde{r}_1 & \tilde{s}_1 &  &  \\
			 & \tilde{r}_2 &\tilde{s}_2 &  \\
			 & & \ddots    & \ddots
		\end{pmatrix},\quad
		\widetilde{\mathcal{A}}_U=
		\begin{pmatrix}
			\tilde{y}_0 & \tilde{x}_0 &  &  & \\
			 & \tilde{y}_1 & \tilde{x}_1 &  & \\
			&  &\tilde{y}_2 &  \tilde{x}_2 & \\
			 &  &  & \ddots & \ddots
		\end{pmatrix}.
\end{equation}
This gives the system of equations
\begin{equation}\label{AL_AU_eq}
\begin{split}
\lambda_n & = \tilde{s}_n\tilde{x}_n,\quad n\geq0, \\	
-(\mu_n+\lambda_n) & = \tilde{r}_n\tilde{x}_{n-1}+\tilde{s}_n\tilde{y}_n, \quad n\geq1, \quad -(\lambda_0+\mu_0)=\tilde{s}_0\tilde{y}_0,\\
\mu_n &= \tilde{r}_n\tilde{y}_{n-1}, \quad n\geq1.
\end{split}
\end{equation}
Since the birth and death rates $\{\lambda_n,\mu_n\}$ are positive, we have that $\tilde{s}_n\tilde{x}_n>0, n\geq 0,$ and $\tilde{r}_n\tilde{y}_{n-1}>0, n\geq 1$; hence, $\tilde{s}_n,\tilde{x}_n\neq 0, n\geq 0,$ and $\tilde{r}_n,\tilde{y}_{n-1}\neq 0, n\geq 1$. Additionally, we are interested in the case where the Darboux transformation $\widehat{\mathcal{A}}=\widetilde{\mathcal{A}}_U\widetilde{\mathcal{A}}_L $ produces a new birth--death process. That means that we need to have that $\widehat{\mathcal{A}}\bm e=-\hat{\mu}_0e^{(0)}$, that is,
\begin{equation}\label{AU_AL_eq}
\begin{split}
\hat{\lambda}_n & = \tilde{x}_n\tilde{s}_{n+1}, \quad n\geq0, \\
-(\hat{\lambda}_n+\hat{\mu}_n) & = \tilde{x}_n\tilde{r}_{n+1}+\tilde y_n\tilde{s}_{n},\quad n\geq 0,\\
\hat{\mu}_n &= \tilde{y}_n\tilde{r}_{n}, \quad n\geq1,
\end{split}
\end{equation}
with $\hat\lambda_n,\hat\mu_{n+1}>0, n\geq0,$ and $\hat\mu_0\geq0$. 
\smallskip

Before continuing, we may assume without loss of generality that $\tilde s_0=1$. This is a consequence of the normalization of the polynomials generated by the three-term recurrence relation associated with $\widehat{\mathcal{A}}$. Indeed, take the vector of polynomials $Q$ generated by the eigenvalue equation $-xQ = \mathcal{A}Q$. Using the factorization $\mathcal{A}=\widetilde{\mathcal{A}}_L\widetilde{\mathcal{A}}_U$ and multiplying $\widetilde{\mathcal{A}}_U$ on the left we obtain
\(-x \widetilde{\mathcal{A}}_U Q = \widetilde{\mathcal{A}}_U \widetilde{\mathcal{A}}_L\widetilde{\mathcal{A}}_U Q.\) Defining $\bar{Q} = \widetilde{\mathcal{A}}_U Q$, it follows that
\(-x \bar{Q} = \widehat{\mathcal{A}}\,\bar{Q},\) and the family $(\bar{Q}_n)_{n\in\mathbb{N}_0}$ can be obtained from $(Q_n)_{n\in\mathbb{N}_0}$ as follows:
\begin{equation}\label{Qb1}
\bar{Q}_n(x) = \tilde{x}_nQ_{n+1}(x)+\tilde{y}_nQ_n(x), 
\quad n \geq 0.
\end{equation}
Moreover, if $-xQ = \widetilde{\mathcal{A}}_L\widetilde{\mathcal{A}}_U Q,$ we also have
\(-xQ = \widetilde{\mathcal{A}}_L \bar{Q},\) that is,
\begin{equation}\label{Qb2}
\begin{aligned}
-x Q_0(x) &= \tilde{s}_0 \, \bar{Q}_0(x), \\
-x Q_n(x) &= \tilde{r}_n \, \bar{Q}_{n-1}(x) + \tilde{s}_n \, \bar{Q}_n(x), 
\quad n \geq 1.
\end{aligned}
\end{equation}
In this case, it is important to note that $\deg(\bar{Q}_n)=n+1$ and, using \eqref{Qb2}, we have that $(\bar{Q}_n)_{n\in\mathbb{N}_0}$ can be written as
\begin{equation}\label{Qb3}
\bar{Q}_0(x) = -\frac{x}{\tilde s_0}=-x\hat{Q}_0(x), 
\quad 
\bar{Q}_n(x) = -x \hat{Q}_n(x),\quad n\geq1,
\end{equation}
where $(\hat{Q}_n)_{n\in\mathbb{N}_0}$ is a family of polynomials satisfying $\deg(\hat{Q}_n) = n$. Since we wish to normalize this family so that $\hat{Q}_0(x) = 1$, we may assume that $\tilde{s}_0 = 1$. This will be the family of polynomials associated with the Darboux transformation $\widehat{\mathcal{A}}$.

As for the spectral measure associated with $\widehat{\mathcal{A}}$, it is very well-known (see \cite{GdI,Yo}) that the sequence of polynomials $(\hat{Q}_n)_{n\in\mathbb{N}_0}$ defined by \eqref{Qb3} is orthogonal with respect to the \emph{Christoffel transform} of $\psi$ given by 
\begin{equation}\label{spmec1}
	\widehat{\psi}(x)=-\frac{x}{\tilde{y}_0}\psi(x),
\end{equation}
and the potential coefficients $(\hat{\pi}_n)_{n\in\mathbb{N}_0}$ of $\widehat{\mathcal{A}}$ are given by
\begin{equation*}
\hat{\pi}_0=1, \quad\hat{\pi}_n=\frac{\hat{\lambda}_0\cdots\hat{\lambda}_{n-1}}{\hat{\mu}_1\cdots\hat{\mu}_n}=\frac{\tilde{y}_0\tilde{s}_n}{\tilde{y}_n}\pi_n,  \quad n\geq 1.
\end{equation*}

\medskip

Going back to the LU factorization, let us now give a procedure to compute the coefficients $\tilde s_n,\tilde r_n,\tilde y_n, \tilde x_n,$ in \eqref{AL_AU} recursively, in this order, using \eqref{AL_AU_eq} and \eqref{AU_AL_eq}. For that, introduce the sequences
\begin{equation}\label{ttqq}
 \tilde t_0=1,\quad \tilde t_n= \tilde s_n+\tilde r_n,\quad n\geq1,\quad \tilde q_n = \frac{\tilde y_n}{\tilde x_n},\quad n\geq0.
\end{equation}
Summing the three equations in \eqref{AU_AL_eq} and taking in mind that $\tilde s_0=1$, we obtain the coupling conditions:
\begin{equation*}\label{coucon}
\begin{split}
&\tilde y_0+\tilde x_0(\tilde s_1+\tilde r_1)=-\hat\mu_0,\\
&\tilde x_n(\tilde s_{n+1}+\tilde r_{n+1})+\tilde y_n(\tilde s_n+\tilde r_n)=0,\quad n\ge1.
\end{split}
\end{equation*}
With the definitions \eqref{ttqq}, and using that $\tilde x_0=\lambda_0$ and $\tilde y_0=-(\lambda_0+\mu_0)$ (see \eqref{AL_AU_eq} for $n=0$), we may write these coupling conditions as the first-order difference equation
\begin{equation}\label{deqtt}
\tilde t_{n+1} = -\tilde q_n\tilde t_n,\quad n\geq1,\quad \tilde t_1=1+\frac{\mu_0-\hat\mu_0}{\lambda_0}.
\end{equation}
On the other hand, substituting \(\tilde r_n=\dfrac{\mu_n}{\tilde y_{n-1}}\) and \(\tilde s_n=\dfrac{\lambda_n}{\tilde x_n}\) into the second relation in \eqref{AL_AU_eq}, gives
\[
-(\lambda_n+\mu_n)
= \frac{\mu_n}{q_{n-1}} + \lambda_n q_n.
\]
Rearranging this identity we get the nonlinear first-order recurrence equation
\begin{equation}\label{deqqq}
q_n= -1 - \frac{\mu_n}{\lambda_n} - \frac{\mu_n}{\lambda_nq_{n-1}},\quad n\ge1,\quad q_0 = -1-\frac{\mu_0}{\lambda_0}.
\end{equation}

\begin{proposition}
The explicit solutions of the recurrence equations \eqref{deqtt} and \eqref{deqqq} are given by
\begin{equation}\label{solqt}
\begin{split}
\tilde q_n&= -\frac{Q_{n+1}(0)}{Q_n(0)},\quad n\geq0, \\
\tilde t_n &= \left(1-\frac{\hat\mu_0}{\lambda_0+\mu_0}\right)Q_n(0),\quad n\geq2,\quad \tilde t_1=1+\frac{\mu_0-\hat\mu_0}{\lambda_0},\quad \tilde t_0=1,
\end{split}
\end{equation}
where $Q_n(0)$ is the sequence of polynomials generated by the three-term recurrence relation \eqref{QQs} evaluated at $x=0$ (see \eqref{Qn0}).
\end{proposition}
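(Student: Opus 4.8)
The plan is to verify both closed forms by a short induction, in each case reducing the recurrence at hand to the three-term recurrence \eqref{QQs} evaluated at $x=0$. First I would record that, by \eqref{Qn0}, $Q_n(0)\ge 1>0$ for every $n\ge 0$, so that all the ratios appearing below — and in particular the quantities $\tilde q_n=\tilde y_n/\tilde x_n$ themselves — are well defined and nonzero; this is what guarantees that \eqref{deqqq} and \eqref{deqtt} make sense.

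For the formula for $\tilde q_n$, evaluate \eqref{QQs} at $x=0$ to obtain $\lambda_n Q_{n+1}(0)=(\lambda_n+\mu_n)Q_n(0)-\mu_n Q_{n-1}(0)$, $n\ge 0$, with $Q_{-1}(0)=0$ and $Q_0(0)=1$. The case $n=0$ gives $Q_1(0)=1+\mu_0/\lambda_0$, so that $-Q_1(0)/Q_0(0)=-1-\mu_0/\lambda_0=\tilde q_0$, matching the initial value in \eqref{deqqq}. For the inductive step, assume $\tilde q_{n-1}=-Q_n(0)/Q_{n-1}(0)$; dividing the displayed recurrence by $\lambda_n Q_n(0)$ and rearranging yields
\[
-\frac{Q_{n+1}(0)}{Q_n(0)}=-1-\frac{\mu_n}{\lambda_n}+\frac{\mu_n}{\lambda_n}\cdot\frac{Q_{n-1}(0)}{Q_n(0)}=-1-\frac{\mu_n}{\lambda_n}-\frac{\mu_n}{\lambda_n\tilde q_{n-1}},
\]
which is precisely the recursion \eqref{deqqq}. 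Since a solution of the first-order recurrence \eqref{deqqq} is uniquely determined by its initial value, it follows that $\tilde q_n=-Q_{n+1}(0)/Q_n(0)$ for all $n\ge 0$.

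For the formula for $\tilde t_n$, substitute the expression just obtained into \eqref{deqtt}: since $-\tilde q_n=Q_{n+1}(0)/Q_n(0)$, the equation becomes $\tilde t_{n+1}=\big(Q_{n+1}(0)/Q_n(0)\big)\tilde t_n$ for $n\ge 1$, and telescoping gives $\tilde t_n=\big(Q_n(0)/Q_1(0)\big)\tilde t_1$ for $n\ge 2$. It then remains only to simplify the constant: from $Q_1(0)=(\lambda_0+\mu_0)/\lambda_0$ and $\tilde t_1=1+(\mu_0-\hat\mu_0)/\lambda_0=(\lambda_0+\mu_0-\hat\mu_0)/\lambda_0$ one reads off $\tilde t_1/Q_1(0)=(\lambda_0+\mu_0-\hat\mu_0)/(\lambda_0+\mu_0)=1-\hat\mu_0/(\lambda_0+\mu_0)$, which is the claimed prefactor; the values $\tilde t_0=1$ and $\tilde t_1$ are just the prescribed initial data of \eqref{deqtt}.

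No genuine obstacle arises here; the whole argument is an induction backed by \eqref{QQs} at $x=0$. The only points deserving a line of care are the well-posedness remark (nonvanishing of $Q_n(0)$, hence of the $\tilde q_n$) and the bookkeeping of index ranges — the $Q_n(0)$-form of $\tilde t_n$ is asserted only for $n\ge 2$, and for $n=0$ it genuinely fails (there $\tilde t_0=1$ while $(1-\hat\mu_0/(\lambda_0+\mu_0))Q_0(0)=1-\hat\mu_0/(\lambda_0+\mu_0)$), which is why the statement lists $\tilde t_0$ and $\tilde t_1$ separately.
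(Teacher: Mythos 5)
Your proof is correct and follows essentially the same route as the paper: the formula for $\tilde q_n$ is obtained by recognizing that $-Q_{n+1}(0)/Q_n(0)$ satisfies \eqref{deqqq} (via \eqref{QQs} at $x=0$) and invoking uniqueness from the initial value, and the formula for $\tilde t_n$ follows by telescoping \eqref{deqtt} and simplifying the constant using $Q_1(0)=1+\mu_0/\lambda_0$. Your added remark on the nonvanishing of $Q_n(0)$ (hence well-posedness of the ratios) is a sensible small supplement, not a departure.
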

\begin{proof}
Evaluating \eqref{QQs} at $x=0$ and dividing by $-Q_n(0)$ gives
\begin{equation*}\label{eq:ratio1}
-\frac{Q_{n+1}(0)}{Q_n(0)} = -1-\frac{\mu_n}{\lambda_n}+\frac{\mu_n}{\lambda_n}\frac{Q_{n-1}(0)}{Q_n(0)}.
\end{equation*}
We observe that the sequence $-Q_{n+1}(0)/Q_{n}(0)$ satisfies the nonlinear first-order recurrence equation with initial condition \eqref{deqqq}. Hence we get the solution for $\tilde q_n$ by induction. On the other hand, solving directly \eqref{deqtt}, we obtain
\[
\tilde t_n=\tilde t_1\prod_{k=1}^{n-1}(-\tilde q_k), \quad n\ge2.
\]
Using the representation for $\tilde q_n$, the initial value $\tilde t_1$ in \eqref{deqtt} and that $Q_1(0)=1+\mu_0/\lambda_0$, the product telescopes and gives the closed form
$$
\tilde t_n =\left(1+\frac{\mu_0-\hat\mu_0}{\lambda_0}\right)\frac{Q_n(0)}{Q_1(0)}=\left(1-\frac{\hat\mu_0}{\lambda_0+\mu_0}\right)Q_n(0),\quad n\geq2,
$$
obtaining the result.
\end{proof}
If the sequences $\tilde t_n$ and $\tilde q_n$ are known, then
\begin{equation*}
\tilde r_n =\frac{\mu_n}{\tilde y_{n-1}}=\frac{\mu_n}{\tilde q_{n-1} \tilde x_{n-1}}= \frac{\mu_n \tilde s_{n-1}}{\lambda_{n-1}\tilde q_{n-1}}\quad \Longrightarrow\quad
\tilde s_n= \tilde t_n - \frac{\mu_n \tilde s_{n-1}}{\lambda_{n-1}\tilde q_{n-1}},\quad n\geq1.
\end{equation*}
This gives a recursive way of obtaining explicitly $\tilde s_n$ knowing that $\tilde s_0=1$. Indeed,
$$
\tilde s_n =\left(1-\frac{\hat\mu_0}{\lambda_0+\mu_0}\right)Q_n(0) + \frac{\mu_n }{\lambda_{n-1}}\frac{Q_{n-1}(0)}{Q_{n}(0)}\tilde s_{n-1},\quad n\geq1,\quad\tilde s_0=1.
$$
A straightforward computation using the definition of the potential coefficients $(\pi_n)_{n\in\mathbb{N}_0}$ in \eqref{potcoef} gives
\begin{equation}\label{ssnn}
\tilde s_n=\frac{1}{\pi_n Q_n(0)}\left[\frac{\hat\mu_0}{\lambda_0+\mu_0}+\left(1-\frac{\hat\mu_0}{\lambda_0+\mu_0}\right)\sum_{k=0}^n \pi_kQ_k^2(0)\right], \quad n\ge1,\quad \tilde s_0=1.
\end{equation}
Once we have $\tilde s_n$ and using the initial conditions $\tilde s_0=1$ and $\tilde x_0=\lambda_0,\tilde y_0=-(\lambda_0+\mu_0)$, the rest of sequences $\tilde r_n,\tilde x_n,\tilde y_n$ are obtained by the following algebraic relations:
\begin{equation}\label{algrelLU}
\tilde r_n= \tilde t_n - \tilde s_n,\quad n\geq1,\quad \tilde x_n = \frac{\lambda_n}{\tilde s_n}, \quad
\tilde y_n = \tilde q_n \tilde x_n=\frac{\lambda_n\tilde q_n}{\tilde s_n},\quad n\geq0.
\end{equation}
In other words,
\begin{align}
\label{rrnn}\tilde r_n&=-\frac{1}{\pi_n Q_n(0)}\left[\frac{\hat\mu_0}{\lambda_0+\mu_0}+\left(1-\frac{\hat\mu_0}{\lambda_0+\mu_0}\right)\sum_{k=0}^{n-1} \pi_kQ_k^2(0)\right], \quad n\ge1,\\
\label{xxnn}\tilde x_n&=\frac{\lambda_n\pi_nQ_n(0)}{\dfrac{\hat\mu_0}{\lambda_0+\mu_0}+\left(1-\dfrac{\hat\mu_0}{\lambda_0+\mu_0}\right)\D\sum_{k=0}^n \pi_kQ_k^2(0)},\quad n\ge1,\quad \tilde x_0=\lambda_0,\\
\label{yynn}\tilde y_n&=-\tilde{x}_n\frac{Q_{n+1}(0)}{Q_n(0)}=-\frac{\lambda_n\pi_nQ_{n+1}(0)}{\dfrac{\hat\mu_0}{\lambda_0+\mu_0}+\left(1-\dfrac{\hat\mu_0}{\lambda_0+\mu_0}\right)\D\sum_{k=0}^n \pi_kQ_k^2(0)},\quad n\ge1,\quad \tilde y_0=-(\lambda_0+\mu_0).
\end{align}
From \eqref{AU_AL_eq} and \eqref{algrelLU} we can write the birth--death rates of the Darboux transformation $\widehat{\mathcal{A}}$ in terms of the original birth--death rates and the sequences $\tilde s_n$ and $\tilde q_n$, that is
\begin{equation}\label{newbdcoe}
\hat\lambda_n=\lambda_n\frac{\tilde s_{n+1}}{\tilde s_n},\quad\hat\mu_{n+1}=\frac{\lambda_{n+1}\mu_{n+1}}{\lambda_n}\cdot\frac{\tilde q_{n+1}}{\tilde q_n}\cdot\frac{\tilde s_n}{\tilde s_{n+1}},\quad n\geq0.
\end{equation}

%
%\begin{remark}
%Observe that, if we use the formula \eqref{Qn0} for $Q_n(0)$ we can write all coefficients $\tilde r_n,\tilde s_n, \tilde x_n,\tilde y_n$ in terms of the birth--death rates $\{\lambda_n,\mu_n\}$ and the corresponding potential coefficients $\pi_n$ in \eqref{potcoef}.
%\end{remark}

\begin{remark}\label{remdual}
Observe that if we choose $\mu_0=0$ and $\hat\mu_0=\lambda_0$ we obtain a simplified LU factorization with coefficients (taking in mind that $Q_n(0)=1$ in this case, see \eqref{Qn0}):
$$
\tilde s_n=\frac{1}{\pi_n},\quad\tilde r_{n+1}=-\frac{1}{\pi_{n+1}},\quad \tilde x_n=\lambda_n\pi_n,\quad \tilde y_n=-\lambda_n\pi_n,\quad n\geq0.
$$
%Since the potential coefficients $\pi_n$ quantify the relative accessibility of state $n$ with respect to state $0$, their reciprocals (in this case, the coefficient $\tilde{s}_n$) reflect how resistant or unlikely it is for the process to reach state $n$ from lower states. A similar interpretation can be deduced from $\tilde{x}_n=\lambda_n\pi_n$, which represents the equilibrium flow between states $n$ and $n+1$.
The birth--death rates of the Darboux transformation $\widehat{\mathcal{A}}$ (or $\widehat{\mathcal{A}}^{(\lambda_0)}$
in the notation used at the beginning of Section~\ref{sec2}) are given by (see \eqref{newbdcoe}) 
$
\hat\lambda_n=\mu_{n+1},\hat\mu_{n}=\lambda_{n},n\geq0.
$
This factorization is closely related with the family of \emph{dual polynomials} $(H_n)_{n\in\mathbb{N}_0}$ already introduced in \cite{KMc2} (for the case $\mu_0$=0). It is easy to see that in our case, these dual polynomials coincide with the family $(\hat Q_n)_{n\in\mathbb{N}_0}$ defined by \eqref{Qb3}. For more information, see \cite[Remark 3.15]{MDIB}, where $\widehat{\mathcal{A}}^{(\lambda_0)}$ is denoted by $\mathcal{A}^d$.
\end{remark}

\begin{remark}
The family of orthogonal polynomials $(\hat{Q}_n)_{n\in\mathbb{N}_0}$ associated with the Darboux transformation $\widehat{\mathcal{A}}$, and orthogonal with respect to the Christoffel transform of $\psi$ given in~\eqref{spmec1}, can be simplified by means of the Christoffel--Darboux formula~\eqref{CDF}. Indeed, recall that this family can be defined, using~\eqref{Qb1} and~\eqref{Qb3}, as
\begin{equation}\label{Qnhat}
\hat{Q}_n(x) = -\frac{1}{x}\left(\tilde{x}_n Q_{n+1}(x) + \tilde{y}_n Q_n(x)\right), \quad n \ge 0.
\end{equation}
Since $\tilde{y}_n = \tilde{q}_n \tilde{x}_n$, and using the definition of $\tilde{q}_n$ in~\eqref{solqt}, we obtain
\begin{align*}
\hat{Q}_n(x)
&= -\frac{\tilde{x}_n}{x} \left( Q_{n+1}(x) - \frac{Q_{n+1}(0)}{Q_n(0)} Q_n(x) \right)
= -\frac{\tilde{x}_n Q_{n+1}(0)}{x} \left( \frac{Q_{n+1}(x)}{Q_{n+1}(0)} - \frac{Q_n(x)}{Q_n(0)} \right) \\[0.3em]
&= \frac{\tilde{x}_n}{Q_n(0)} \left[ \frac{Q_{n+1}(x) Q_n(0) - Q_n(x) Q_{n+1}(0)}{-x} \right]
= \frac{\tilde{x}_n}{\lambda_n \pi_n Q_n(0)} \sum_{k=0}^n \pi_k Q_k(0) Q_k(x) \\[0.3em]
&= \frac{\displaystyle\sum_{k=0}^n \pi_k Q_k(0) Q_k(x)}{\dfrac{\hat{\mu}_0}{\lambda_0 + \mu_0} + \left(1 - \dfrac{\hat{\mu}_0}{\lambda_0 + \mu_0}\right) \displaystyle\sum_{k=0}^n \pi_k Q_k^2(0)}.
\end{align*}
The last two steps follow from the Christoffel--Darboux formula~\eqref{CDF} evaluated at $y = 0$, together with the definition of $\tilde{x}_n$ given in~\eqref{xxnn}.
\end{remark}

Up to now, we have computed the coefficients $(\tilde s_n)_{n\in\mathbb{N}_0}$, $(\tilde r_n)_{n\in\mathbb{N}}$, $(\tilde y_n)_{n\in\mathbb{N}_0}$, and $(\tilde x_n)_{n\in\mathbb{N}_0}$ in \eqref{ssnn} and \eqref{rrnn}--\eqref{yynn} so that $\widehat{\mathcal{A}}\bm e=-\hat{\mu}_0e^{(0)}$, that is, the sum of every row of the Darboux transform \(\widehat{\mathcal{A}}\) equals 0 (except for the first one, see \eqref{AU_AL_eq}). 
Now, we are ready to state the first of the main results of this paper, which provides conditions ensuring that the Darboux transform \(\widehat{\mathcal{A}}\) defines a genuine birth--death process.

\begin{theorem}\label{theom33}
Let $\{X_t : t \ge 0\}$ be a birth--death process with birth and death rates 
$\{\lambda_n,\mu_n\}$ and infinitesimal generator $\mathcal{A}$ given by~\eqref{QQmm}.  
Consider the LU factorization $\mathcal{A}=\widetilde{\mathcal{A}}_L\widetilde{\mathcal{A}}_U$, 
where $\widetilde{\mathcal{A}}_L$ and $\widetilde{\mathcal{A}}_U$ are defined in~\eqref{AL_AU}, 
with coefficients $(\tilde s_n)_{n\in\mathbb{N}_0}$, $(\tilde r_n)_{n\in\mathbb{N}}$, 
$(\tilde y_n)_{n\in\mathbb{N}_0}$, and $(\tilde x_n)_{n\in\mathbb{N}_0}$ given by \eqref{ssnn} and \eqref{rrnn}--\eqref{yynn}, respectively.  
Let \(\widehat{\mathcal{A}}=\widetilde{\mathcal{A}}_U \widetilde{\mathcal{A}}_L\) be the Darboux 
transform of $\mathcal{A}$ (see \eqref{AU_AL_eq}).  Then \(\widehat{\mathcal{A}}\) is the infinitesimal generator of a new birth--death process 
$\{\widehat{X}_t : t \ge 0\}$ with birth and death rates $\{\hat\lambda_n,\hat\mu_n\}$ given 
by~\eqref{newbdcoe} (with $(\tilde q_n)_{n\in\mathbb{N}_0}$ defined by \eqref{solqt})  if and only if the absorption rate $\hat\mu_0\geq0$ of the Darboux-transformed 
process satisfies
\begin{equation}\label{condLU0}
    0 \le \hat\mu_0 \le (\lambda_0+\mu_0)\,\frac{S}{S-1},
\end{equation}
where
\begin{equation}\label{SSS}
    S = \sum_{k=0}^\infty \pi_k\, Q_k^2(0) > 1.
\end{equation}
Here $(\pi_n)_{n\in\mathbb{N}_0}$ denotes the potential coefficients~\eqref{potcoef}, and $Q_n(0)$ is the sequence of polynomials generated by the three-term recurrence relation \eqref{QQs} evaluated at $x=0$ (see also \eqref{Qn0}).
\end{theorem}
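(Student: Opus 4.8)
The plan is to reduce the claim to a single positivity requirement on the diagonal factor $(\tilde s_n)_{n\in\mathbb{N}_0}$ of $\widetilde{\mathcal{A}}_L$, and then to read the bound on $\hat\mu_0$ directly off the closed form~\eqref{ssnn}. By the computations carried out before the theorem, the coefficients~\eqref{ssnn} and~\eqref{rrnn}--\eqref{yynn} satisfy the system~\eqref{AL_AU_eq} together with the coupling conditions; hence $\mathcal{A}=\widetilde{\mathcal{A}}_L\widetilde{\mathcal{A}}_U$ holds, and the product $\widehat{\mathcal{A}}=\widetilde{\mathcal{A}}_U\widetilde{\mathcal{A}}_L$ is automatically tridiagonal with $\widehat{\mathcal{A}}\bm e=-\hat\mu_0 e^{(0)}$. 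Therefore $\widehat{\mathcal{A}}$ is the generator of a birth--death process exactly when the factorization is well defined (all $\tilde s_n\neq0$, so that $\tilde x_n=\lambda_n/\tilde s_n$, $\tilde y_n$ and $\tilde r_n$ make sense) and the off-diagonal entries $\hat\lambda_n$ $(n\ge0)$ and $\hat\mu_{n+1}$ $(n\ge0)$ in~\eqref{newbdcoe} are strictly positive, the remaining requirement $\hat\mu_0\ge0$ being part of the hypothesis.

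First I would eliminate the role of $(\tilde q_n)$. From~\eqref{solqt} and~\eqref{Qn0}, $\tilde q_n=-Q_{n+1}(0)/Q_n(0)<0$ for every $n$, because $Q_n(0)>0$ for all $n$ whenever $\mu_0\ge0$; hence each ratio $\tilde q_{n+1}/\tilde q_n$ appearing in~\eqref{newbdcoe} is positive. Since also $\lambda_n,\mu_n>0$, the positivity of all $\hat\lambda_n=\lambda_n\,\tilde s_{n+1}/\tilde s_n$ and all $\hat\mu_{n+1}$ — together with $\tilde s_n\neq0$ — is equivalent to saying that consecutive $\tilde s_n$ share the same sign, i.e., since $\tilde s_0=1$, that $\tilde s_n>0$ for every $n\in\mathbb{N}_0$.

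Next I would extract the constraint from~\eqref{ssnn}. Writing $c=\hat\mu_0/(\lambda_0+\mu_0)\ge0$ and $S_n=\sum_{k=0}^{n}\pi_k Q_k^2(0)$, we have $S_0=1$, $S_n>1$ for $n\ge1$, $(S_n)$ strictly increasing, and $S_n\uparrow S$ with $S-S_n=\sum_{k>n}\pi_k Q_k^2(0)>0$; in particular $S\ge 1+\pi_1 Q_1^2(0)>1$, which gives~\eqref{SSS}. Since $\pi_n Q_n(0)>0$, formula~\eqref{ssnn} reads, for $n\ge1$, as $\pi_n Q_n(0)\,\tilde s_n=c+(1-c)S_n=S_n-c(S_n-1)$, so $\tilde s_n>0$ iff $c<S_n/(S_n-1)$. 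Because $s\mapsto s/(s-1)$ is strictly decreasing on $(1,\infty)$ and $S_n\uparrow S$ strictly, the numbers $S_n/(S_n-1)$ decrease strictly to $S/(S-1)$ when $S<\infty$ (and to $1$ when $S=\infty$, consistently with reading $S/(S-1)=1$ in that case). Hence $\tilde s_n>0$ for all $n$ iff $c\le S/(S-1)$: the ``if'' part uses $S_n<S$ strictly, so $S_n/(S_n-1)>S/(S-1)\ge c$; the ``only if'' part uses $S_n/(S_n-1)\downarrow S/(S-1)$, so that $c>S/(S-1)$ forces $\tilde s_{n_0}\le0$ for some $n_0$, and then the first index $m\ge1$ with $\tilde s_m\le0$ produces either $\hat\lambda_{m-1}<0$ (if $\tilde s_m<0$) or an undefined $\tilde x_m$ (if $\tilde s_m=0$). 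Unwinding $c=\hat\mu_0/(\lambda_0+\mu_0)$ turns $c\le S/(S-1)$ into exactly~\eqref{condLU0}.

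The step requiring the most care is the boundary case $c=S/(S-1)$ with $S$ finite: there $\pi_n Q_n(0)\tilde s_n=S_n-c(S_n-1)\to0$, yet one must observe that each individual $\tilde s_n$ is \emph{still} strictly positive (because $S_n<S$ strictly), so $\widehat{\mathcal{A}}$ remains a genuine birth--death generator even though its rates may degenerate asymptotically; and, symmetrically, in the regime $c>S/(S-1)$ (or $c>1$ when $S=\infty$) one must actually exhibit an index at which the sign of $\tilde s_n$ flips, which is precisely where the monotone convergence $S_n/(S_n-1)\downarrow S/(S-1)$ is invoked. Beyond this, the argument is a direct substitution into the identities~\eqref{newbdcoe} and~\eqref{ssnn} already established.
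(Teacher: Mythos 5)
Your proposal is correct and follows essentially the same route as the paper's proof: reduce the problem to the strict positivity of $(\tilde s_n)_{n\in\mathbb{N}_0}$ (using $\tilde q_n<0$ and $\tilde s_0=1$), read off from~\eqref{ssnn} the family of inequalities $\hat\mu_0<(\lambda_0+\mu_0)\bigl[1+\bigl(\sum_{k=1}^n\pi_kQ_k^2(0)\bigr)^{-1}\bigr]$, and pass to the limit using the monotonicity of the partial sums to obtain~\eqref{condLU0} with $S>1$. Your explicit treatment of the boundary case $\hat\mu_0=(\lambda_0+\mu_0)S/(S-1)$ and of the index where positivity first fails only makes explicit what the paper leaves implicit; no gap.
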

\begin{proof}
By the computation of the coefficients $(\tilde s_n)_{n\in\mathbb{N}_0}$, $(\tilde r_n)_{n\in\mathbb{N}}$, $(\tilde y_n)_{n\in\mathbb{N}_0}$, and $(\tilde x_n)_{n\in\mathbb{N}_0}$ in \eqref{ssnn} and \eqref{rrnn}--\eqref{yynn} we have that $\widehat{\mathcal{A}}\bm e=-\hat{\mu}_0e^{(0)}$. \(\widehat{\mathcal{A}}\) defines a genuine birth--death process if and only if \(\hat{\lambda}_n, \hat{\mu}_{n+1} > 0\) for all \(n \ge 0\) and \(\hat{\mu}_0 \ge 0\). From \eqref{newbdcoe} and \eqref{solqt}, it follows that $\hat\lambda_n, \hat\mu_{n+1} > 0$ for all $n \ge 0$ if and only if $\tilde{s}_n > 0$ for all $n \ge 0$ (since $\tilde q_n<0$ and $\tilde s_0=1>0$). 
Moreover, the condition $\tilde{s}_n > 0$ for all $n \ge 0$ in \eqref{ssnn} imposes a restriction on the admissible values of $\hat\mu_0$, which must lie below a certain bound. 
Indeed, solving the inequalities $\tilde{s}_n > 0$ yields
$$
0\leq\hat\mu_0<(\lambda_0+\mu_0)\left[1+\left(\sum_{k=1
}^n\pi_kQ_k^2(0)\right)^{-1}\right],\quad n\geq1.
$$
Since $\sum_{k=1}^n\pi_kQ_k^2(0)$ is an increasing sequence, then we need to have condition \eqref{condLU0} where $S$ is the series \eqref{SSS}. Observe that $S>1$ as a consequence of the definition of $Q_n(0)$ in \eqref{Qn0} and that $\pi_0=1$.
\end{proof}
\begin{remark}\label{rem35}
For the special case of $\mu_0=0$ (conservative birth-death process), we have that $Q_n(0)=1$ (see \eqref{Qn0}) and therefore $S=B,$ where $B$ is given by \eqref{quantity_AB}. Hence, the condition \eqref{condLU0} will be now
\begin{equation}\label{condLU2}
0\leq\hat\mu_0\leq\frac{\lambda_0B}{B-1}.
\end{equation}
The quantity $B$ in this context may be either finite or infinite. On the other hand, if $\mu_0>0$, the series $S$ in \eqref{SSS} is connected with the uniqueness of the Stieltjes moment problem associated with the infinitesimal generator $\mathcal{A}$. When $S=\infty$, the moment problem is determinate (see \cite[Theorems~14 and~15]{KMc2} or \cite[Theorem~3.26]{MDIB}). In this case one must require
\(0 \leq \hat\mu_0 \leq \lambda_0 + \mu_0.\) Nevertheless, our results remain valid even when the underlying birth--death
process corresponds to an indeterminate Stieltjes moment problem.
\end{remark}

\subsubsection{Probabilistic interpretation of the coefficients of the LU factorization} Using the results obtained in Section \ref{sec2} we may write the coefficients $\tilde{s}_n$, $\tilde{r}_n$, $\tilde{x}_n$, and $\tilde{y}_n$ in terms of expected values associated with certain stopping times. Indeed, from the definition of $\tilde{s}_n$ in~\eqref{ssnn}, and taking into account~\eqref{endcod} and~\eqref{expocctime}, we obtain $\tilde s_0=1$ and
\begin{equation}\label{ssnnprob}
\tilde{s}_n
= \left(1 - \dfrac{\hat{\mu}_0}{\lambda_0 + \mu_0}\right)
  \lambda_n Q_{n+1}(0)\,
  \mathbb{E}_n[\tau_{n+1} \mid \tau_{n+1} < \tau_{-1}] 
  + \frac{\hat{\mu}_0}{
    \mu_0 (\lambda_0 + \mu_0)\, \mathbb{E}_n\left[T_n^{(-1)}\right]},\quad n\geq1,
\end{equation}
where $\tau_j$ denotes the first passage time to state $j$, and $T_n^{(j)}$ is the occupation time of state $n$ before hitting state $j$, assuming the state space is $\mathcal{S}_n = \{-1, 0, \ldots, n\}$. Now, from the value of $\tilde t_n$ in \eqref{solqt} and using formula \eqref{endcod}, after some computations we obtain
\begin{equation}\label{rrnnprob}
\tilde{r}_n
=-\left(1 - \dfrac{\hat{\mu}_0}{\lambda_0 + \mu_0}\right)
  \mu_n Q_{n-1}(0)\,
  \mathbb{E}_{n-1}[\tau_{n} \mid \tau_{n} < \tau_{-1}] 
  - \frac{\hat{\mu}_0}{
    \mu_0 (\lambda_0 + \mu_0)\, \mathbb{E}_n\left[T_n^{(-1)}\right]},\quad n\geq1.
\end{equation}
Since $\tilde x_n=\lambda_n/\tilde{s}_n$, we get
\begin{equation}\label{xxnnprob}
\tilde{x}_n
= \left[\left(1 - \dfrac{\hat{\mu}_0}{\lambda_0 + \mu_0}\right)
 Q_{n+1}(0)\,
  \mathbb{E}_n[\tau_{n+1} \mid \tau_{n+1} < \tau_{-1}] 
  + \frac{\hat{\mu}_0}{
    \mu_0 (\lambda_0 + \mu_0)\lambda_n\, \mathbb{E}_n\left[T_n^{(-1)}\right]}\right]^{-1},\quad n\geq0.
\end{equation}
Finally, from the value of $\tilde q_n$ in \eqref{solqt} and using formula \eqref{expocctime}, we obtain, after some computations,
\begin{equation}\label{yynnprob}
\tilde{y}_n
= -\left[\left(1 - \dfrac{\hat{\mu}_0}{\lambda_0 + \mu_0}\right)
 Q_{n}(0)\,
  \mathbb{E}_n[\tau_{n+1} \mid \tau_{n+1} < \tau_{-1}] 
  + \frac{\hat{\mu}_0}{
    \mu_0 (\lambda_0 + \mu_0)\mu_{n+1}\, \mathbb{E}_{n+1}\left[T_{n+1}^{(-1)}\right]}\right]^{-1}, n\geq0.
\end{equation}

There are some situations (apart from the one described for the dual process in Remark \ref{remdual}) where the coefficients $\tilde{s}_n$, $\tilde{r}_n$, $\tilde{x}_n$ and $\tilde y_n$ simplify considerably or admits a different probabilistic interpretation.

\begin{itemize}[leftmargin=0.25in]
\item Case $\mu_0=0$ and $B<\infty$. In this setting, the birth--death process is conservative and admits a stationary
distribution, namely
\[
X_\pi=\frac{1}{B}\,(\pi_0,\pi_1,\ldots).
\]
Since $Q_n(0)=1$ (see~\eqref{Qn0}), the coefficient $\tilde{s}_n$ in~\eqref{ssnn}
can be rewritten as
\begin{equation}\label{ssnnB}
\tilde{s}_n
 = \frac{1}{(X_\pi)_n}
   \left(
      \frac{\hat{\mu}_0}{\lambda_0 B}
      + \left(1-\frac{\hat{\mu}_0}{\lambda_0}\right)
         F_{X_\pi}(n)
   \right),
   \quad n\ge 0,
\end{equation}
where $F_{X_\pi}(n)=\mathbb{P}(X_\pi\le n)$ denotes the \emph{cumulative distribution function} of the stationary distribution $X_\pi$.
The remaining coefficients can be computed directly from~\eqref{algrelLU},
taking into account that $\tilde q_n=-1$ for $n\ge 0$ and
$\tilde t_n=1-\hat\mu_0/\lambda_0$ for $n\ge 1$.

According to Remark~\ref{rem35}, the absorbing rate $\hat\mu_0$ of the
Darboux-transformed process must satisfy condition~\eqref{condLU2}.
On the one hand, if $\hat\mu_0=0$, the birth--death rates of
$\widehat{\mathcal{A}}$ in~\eqref{newbdcoe} take the form
\begin{equation}\label{adfsdf}
\hat{\lambda}_n
 = \mu_{n+1}\frac{F_{X_\pi}(n+1)}{F_{X_\pi}(n)},
 \quad
\hat{\mu}_{n+1}
 = \lambda_{n+1}\frac{F_{X_\pi}(n)}{F_{X_\pi}(n+1)},
 \quad n\ge 0.
\end{equation}
On the other hand, if $\hat\mu_0=\lambda_0B/(B-1)$, then
\[
\hat{\lambda}_n
 = \mu_{n+1}\frac{1-F_{X_\pi}(n+1)}{1-F_{X_\pi}(n)},
 \quad
\hat{\mu}_{n+1}
 = \lambda_{n+1}\frac{1-F_{X_\pi}(n)}{1-F_{X_\pi}(n+1)},
 \quad n\ge 0.
\]
As noted in Remark~\ref{remdual}, the case $\hat{\mu}_0=\lambda_0$
(for which $\hat{\lambda}_n=\mu_{n+1}$ and $\hat{\mu}_{n}=\lambda_{n}$
for $n\ge0$) corresponds to the \emph{dual process}, whose infinitesimal
generator is denoted by $\mathcal{A}^d$ or $\widehat{\mathcal{A}}^{(\lambda_0)}$
in the notation used at the beginning of Section~\ref{sec2}.
This dual case lies between the two limiting boundary cases for $\hat{\mu}_0$
described above, since $B/(B-1)>1$.

It is not difficult to verify that the sequence $h(n)=F_{X_\pi}(n)$ is
\emph{harmonic with respect to the dual process $\mathcal{A}^d$}.
Therefore, the operator $\widehat{\mathcal{A}}$ corresponding to
$\hat{\mu}_0=0$ (i.e., $\widehat{\mathcal{A}}^{(0)}$) is precisely
the \emph{Doob $h$-transform} (see~\eqref{doobt}) of the dual process
$\mathcal{A}^d$ (or $\widehat{\mathcal{A}}^{(\lambda_0)}$).
In other words, the process associated with $\widehat{\mathcal{A}}^{(0)}$
is the dual process $\mathcal{A}^d$ conditioned by the harmonic function
$F_{X_\pi}(n)$.
An analogous description applies to the case $\hat{\mu}_0=\lambda_0B/(B-1)$,
where the harmonic function $h$ is now the tail distribution of the stationary
law $X_\pi$.
These two situations are limiting boundary cases, but one may also observe
that the full sequence $\tilde{s}_n$ in~\eqref{ssnnB} is harmonic with respect
to the dual process $\mathcal{A}^d$.

\item Case $\mu_0=\hat\mu_0=0$. When no stationary distribution exists, the coefficient $\tilde{s}_n$ in
\eqref{ssnn} admits a different probabilistic interpretation.  
In this situation we have $\tilde q_n=-1$ for $n\ge 0$ and
$\tilde t_n=1$ for $n\ge 1$.  
Hence, using \eqref{ssnn} together with \eqref{Expx}, we obtain
\[
\tilde{s}_n
 = \frac{1}{\pi_n}\sum_{k=0}^n \pi_k
 = \lambda_n\,\mathbb{E}_n[\tau_{n+1}],
 \quad n\ge 0.
\]
The remaining coefficients follow from \eqref{algrelLU}:
\[
\tilde r_n
 = 1 - \lambda_n\mathbb{E}_n[\tau_{n+1}]
 = -\,\mu_n\,\mathbb{E}_{n-1}[\tau_n],
 \quad n\ge 1,\quad 
\tilde x_n = -\tilde y_n
 = \frac{1}{\mathbb{E}_n[\tau_{n+1}]},
 \quad n\ge 0.
\]
The expression for $\tilde r_n$ follows from the identity
\(
\lambda_n\mathbb{E}_n[\tau_{n+1}]
 = 1 + \mu_n\mathbb{E}_{n-1}[\tau_n], n\ge 1.
\)
Therefore, using \eqref{newbdcoe}, the birth--death rates of the
Darboux-transformed process are
\[
\hat{\lambda}_n
 = \lambda_{n+1}
   \frac{\mathbb{E}_{n+1}[\tau_{n+2}]}
        {\mathbb{E}_n[\tau_{n+1}]},
\quad
\hat{\mu}_{n+1}
 = \mu_{n+1}
   \frac{\mathbb{E}_{n}[\tau_{n+1}]}
        {\mathbb{E}_{n+1}[\tau_{n+2}]},
\quad n\ge 0.
\]

\item Case $\mu_0>0$ and $\hat\mu_0=0$. In this case we start with a nonconservative birth--death process and construct a new conservative birth--death process by using the Darboux transformation. In this situation we have $\tilde q_n=-Q_{n+1}(0)/Q_n(0), n\geq0,$ and $\tilde t_n=Q_n(0), n\geq0$. Therefore, from 
\eqref{ssnnprob}--\eqref{yynnprob} and Lemma \ref{lem21}, we obtain directly
$$
\tilde s_n=\lambda_n\frac{\mathbb{E}_n[\tau_{n+1} \mid \tau_{n+1} < \tau_{-1}]}{\mathbb{P}_0(\tau_{n+1}<\tau_{-1})},\quad\tilde r_n=-\mu_n\frac{\mathbb{E}_{n-1}[\tau_{n} \mid \tau_{n} < \tau_{-1}]}{\mathbb{P}_0(\tau_{n-1}<\tau_{-1})},\quad n\geq1,
$$
$$
\tilde x_n=\dfrac{\mathbb{P}_0(\tau_{n+1}<\tau_{-1})}{\mathbb{E}_n[\tau_{n+1} \mid \tau_{n+1} < \tau_{-1}]},\quad\tilde y_n=-\dfrac{\mathbb{P}_0(\tau_{n+1}<\tau_{-1})}{\mathbb{E}_n[\tau_{n+1} \mid \tau_{n+1} < \tau_{-1}]},\quad n\geq0.
$$
Again, from \eqref{newbdcoe}, we obtain the birth--death rates for the Darboux-transformed process:
\begin{equation*}
\begin{split}
\hat{\lambda}_n&=\lambda_{n+1}\frac{\mathbb{E}_{n+1}[\tau_{n+2} \mid \tau_{n+2} < \tau_{-1}]}{\mathbb{E}_n[\tau_{n+1} \mid \tau_{n+1} < \tau_{-1}]}\frac{\mathbb{P}_0(\tau_{n+1}<\tau_{-1})}{\mathbb{P}_0(\tau_{n+2}<\tau_{-1})},\quad n\geq0,\\
\quad\hat{\mu}_{n+1}&=\mu_{n+1}\frac{\mathbb{E}_{n}[\tau_{n+1} \mid \tau_{n+1} < \tau_{-1}]}{\mathbb{E}_{n+1}[\tau_{n+2} \mid \tau_{n+2} < \tau_{-1}]}\frac{\mathbb{P}_0(\tau_{n+2}<\tau_{-1})}{\mathbb{P}_0(\tau_{n+1}<\tau_{-1})},\quad n\geq0.
\end{split}
\end{equation*}

\item Case $\mu_0>0$ and $\hat\mu_0=\lambda_0+\mu_0$. Now both birth--death processes are nonconservative. In this situation we have $\tilde q_n=-Q_{n+1}(0)/Q_n(0), n\geq0,$ and $\tilde t_n=0, n\geq1$. Therefore, from 
\eqref{ssnnprob}--\eqref{yynnprob}, we obtain directly
$$
\tilde s_n=-\tilde r_n=\frac{1}{\mu_0\mathbb{E}_n\left[T_n^{(-1)}\right]},\quad\tilde x_n=\mu_0\lambda_n\mathbb{E}_n\left[T_n^{(-1)}\right],\quad\tilde y_n=-\mu_0\mu_{n+1}\mathbb{E}_{n+1}\left[T_{n+1}^{(-1)}\right],\quad n\geq0.
$$
Again, from \eqref{newbdcoe} and Lemma \ref{lem21}, we obtain the birth--death rates for the Darboux-transformed process:
$$
\hat{\lambda}_n=\lambda_{n}\frac{\mathbb{E}_n\left[T_n^{(-1)}\right]}{\mathbb{E}_{n+1}\left[T_{n+1}^{(-1)}\right]},\quad\hat{\mu}_{n+1}=\frac{\lambda_{n+1}\mu_{n+1}}{\lambda_{n}}\frac{\mathbb{P}_{n}(\tau_{n+1}<\tau_{-1})}{\mathbb{P}_{n+1}(\tau_{n+2}<\tau_{-1})}\frac{\mathbb{E}_{n+1}\left[T_{n+1}^{(-1)}\right]}{\mathbb{E}_{n}\left[T_{n}^{(-1)}\right]},\quad n\geq0.
$$
\end{itemize}

\subsection{UL factorization}\label{subsec32}

The UL factorization may be regarded as the dual case of the LU factorization. An important difference, however, is that the UL factorization is parameterized by a \emph{free variable}, denoted by $x_0$, in terms of which all the remaining coefficients 
are subsequently determined. Consider now $\mathcal{A}=\mathcal{A}_U\mathcal{A}_L$, where

\begin{equation}\label{AL_AU2}
	\begin{array}{cc}
		\mathcal{A}_U=
		\begin{pmatrix}
			y_0 & x_0 &  &  & \\
			 &y_1 & x_1 &  & \\
			&  &y_2 &  x_2 & \\
			 &  &  & \ddots & \ddots
		\end{pmatrix},\quad 
		\mathcal{A}_L=
		\begin{pmatrix}
			s_0 &  &  &  \\
			r_1 & s_1 &  &  \\
			 & r_2 &s_2 &  \\
			 & & \ddots    & \ddots
		\end{pmatrix}.
	\end{array}
\end{equation}
This gives the system of equations
\begin{equation}\label{AL_AU_eq2}
\begin{split}
\lambda_n & = x_ns_{n+1},\quad n\geq0, \\	
-(\mu_n+\lambda_n) & = y_ns_n+x_nr_{n+1}, \quad n\geq0,\\	
\mu_n &= r_ny_n, \quad n\geq1.
\end{split}
\end{equation}
Since the birth and death rates $\{\lambda_n,\mu_n\}$ are positive, we have that $\tilde{s}_{n+1}\tilde{x}_n>0, n\geq 0,$ and $\tilde{r}_n\tilde{y}_{n}>0, n\geq 1$; hence, $\tilde{s}_{n+1},\tilde{x}_n\neq 0, n\geq 0,$ and $\tilde{r}_n,\tilde{y}_{n}\neq 0, n\geq 1$. Additionally, we are interested in the case where the Darboux transformation $\widetilde{\mathcal{A}}=\mathcal{A}_U\mathcal{A}_L $ produces a new birth--death process. That means that we need to have that $\widetilde{\mathcal{A}}\bm e=-\tilde{\mu}_0e^{(0)}$, i.e.
\begin{equation}\label{AU_AL_eq2}
\begin{split}
\tilde{\lambda}_n & =s_nx_n, \quad n\geq0, \\
-(\tilde{\lambda}_n+\tilde{\mu}_n) & = r_nx_{n-1}+s_ny_n,\quad n\geq 1,\quad-(\tilde\lambda_0+\tilde\mu_0)=s_0y_0,\\
\tilde{\mu}_n &= r_ny_{n-1}, \quad n\geq1,
\end{split}
\end{equation}
with $\tilde\lambda_n,\tilde\mu_{n+1}>0, n\geq0,$ and $\tilde\mu_0\geq0$. 
\smallskip

As before, we may assume without loss of generality that $s_0=1$. Now, defining $\tilde{Q}=\mathcal{A}_LQ$, we have
\begin{equation}\label{QQbb1}
\tilde{Q}_n(x) = r_n Q_{n-1}(x) + s_n Q_n(x), \quad n \geq 0.
\end{equation}
These polynomials have degree $n$ and satisfy the eigenvalue equation $-x \tilde{Q} = \widetilde{\mathcal{A}}\tilde{Q}$. From~\eqref{QQbb1} we see that $\tilde{Q}_0= r_0 Q_{-1}(x) + s_0 Q_0(x) = s_0$. Therefore, the sequence $(\tilde{Q}_n)_{n\in\mathbb{N}_0}$ will correspond to the birth--death polynomials associated with $\widetilde{\mathcal{A}}$ if and only if $s_0 = 1$. 

As for the spectral measure associated with $\widetilde{\mathcal{A}}$, it is very well-known (see \cite{GdI,Yo}) that the polynomials $(\tilde Q_n)_{n\in\mathbb{N}_0}$ are orthogonal with respect to the~\emph{Geronimus transform} of the measure $\psi$, given by
\begin{equation}\label{spmec2e}
\widetilde{\psi}(x) = -y_0 \frac{\psi(x)}{x} + M \delta_0(x), \quad M = 1 +y_0 m_{-1},
\end{equation}
where $\delta_0(x)$ is the Dirac delta and $m_{-1} = \int_{0}^{\infty} x^{-1} d\psi(x)$, assuming implicitly that $m_{-1} < \infty$. From \cite[(3.70)]{MDIB} we have that
\begin{equation}\label{mmm1}
m_{-1}=\frac{A}{1+\mu_0A},
\end{equation}
where $A$ is defined in \eqref{quantity_AB}. Therefore, if $\mu_0>0$, we always have that $m_{-1} < \infty$, but if $\mu_0=0$ then $m_{-1}$ is finite only when $A<\infty$, that is, the original birth--death process is transient. The potential coefficients $(\tilde\pi_n)_{n\in\mathbb{N}_0}$ of $\widetilde{\mathcal{A}}$ are given by
\begin{equation*}
\tilde{\pi}_0=1, \quad\tilde{\pi}_n=\frac{\tilde{\lambda}_0\cdots\tilde{\lambda}_{n-1}}{\tilde{\mu}_1\cdots\tilde{\mu}_n}=\frac{y_n}{y_0s_n}\pi_n,  \quad n\geq 1.
\end{equation*}

\medskip

As in the case of the LU factorization, we will give a procedure to compute the coefficients $x_n,y_n,s_n,r_n,$ recursively, in this order. For that, introduce the sequences (observe the difference with respect to \eqref{ttqq})
\begin{equation*}\label{ttqq2}
t_0=-\tilde\mu_0,\;  t_n= x_n+y_n,\quad n\geq1,\quad q_n = \frac{r_n}{s_n},\quad n\geq1.
\end{equation*}
Summing the three equations in \eqref{AU_AL_eq2} and taking in mind that $s_0=1$, we obtain the coupling conditions:
\begin{equation}\label{coucon2}
\begin{split}
&x_0+y_0=-\tilde\mu_0,\\
&s_n(x_{n}+y_{n})+r_n(x_{n-1}+y_{n-1})=0,\quad n\ge1.
\end{split}
\end{equation}
Observe that the value $t_0=x_0+y_0=-\tilde\mu_0$ is determined by the first of the equations above. Also, we will need an explicit expression of $q_1=r_1/s_1$. Since $s_1=\lambda_0/x_0$ (first equation in  \eqref{AL_AU_eq2} for $n=0$) and $r_1=-(\lambda_0+\mu_0-x_0-\tilde\mu_0)/x_0$ (which can be obtained from the second equation in \eqref{AL_AU_eq2} for $n=0$) we have that (see \eqref{Qn0})
\begin{equation}\label{qq1}
q_1=\frac{r_1}{s_1}=-1-\frac{\mu_0}{\lambda_0}+\frac{x_0+\tilde\mu_0}{\lambda_0}=-Q_1(0)+\frac{x_0+\tilde\mu_0}{\lambda_0}.
\end{equation}
We may write the coupling conditions \eqref{coucon2} as the first-order difference equation
\begin{equation}\label{deqtt2}
t_{n} = -q_n t_{n-1},\quad n\geq1,\quad t_0=-\tilde\mu_0.
\end{equation}
On the other hand, substituting \(y_n=\dfrac{\mu_n}{r_{n}}\) and \(x_n=\dfrac{\lambda_n}{s_{n+1}}\) into the second relation in \eqref{AL_AU_eq2}, gives
\[
-(\lambda_n+\mu_n)
= \frac{\mu_n}{q_{n}} + \lambda_n q_{n+1}.
\]
Rearranging this identity we get the nonlinear first-order recurrence equation
\begin{equation}\label{deqqq2}
q_{n+1}= -1 - \frac{\mu_n}{\lambda_n} - \frac{\mu_n}{\lambda_nq_{n}},\quad n\ge1,\quad q_1 = -1-\frac{\mu_0}{\lambda_0}+\frac{x_0+\tilde\mu_0}{\lambda_0}.
\end{equation}
Notice the similarity of \eqref{deqtt2} and \eqref{deqqq2} with the recurrence relations \eqref{deqtt} and \eqref{deqqq}.

\begin{proposition}
The explicit solutions of the recurrence equations \eqref{deqtt2} and \eqref{deqqq2} are given by
\begin{equation}\label{solqt2}
\begin{split}
q_n&= -\frac{Q_{n}(0)+(x_0+\tilde\mu_0)Q_{n}^{(0)}(0)}{Q_{n-1}(0)+(x_0+\tilde\mu_0)Q_{n-1}^{(0)}(0)},\quad n\geq1, \\
t_n &= -\tilde\mu_0\left(Q_{n}(0)+(x_0+\tilde\mu_0)Q_{n}^{(0)}(0)\right),\quad n\geq0,
\end{split}
\end{equation}
where $Q_n(0)$ and $Q_n^{(0)}(0)$ are the two linearly independent solutions of the three-term recurrence relation \eqref{QQs}, obtained from distinct initial conditions (see \eqref{iccsqn0}) and evaluated at $x=0$ (see \eqref{Qn0} and \eqref{Qn00}).
\end{proposition}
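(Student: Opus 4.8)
The plan is to reduce both identities to a single fact about a linear combination of the two standard solutions of the recurrence \eqref{QQs} evaluated at $x=0$. Set
\[
u_n := Q_n(0) + (x_0+\tilde\mu_0)\,Q_n^{(0)}(0),\qquad n\ge 0,
\]
so that the two claimed formulas in \eqref{solqt2} read simply $q_n=-u_n/u_{n-1}$ for $n\ge1$ and $t_n=-\tilde\mu_0\,u_n$ for $n\ge0$. I would establish each of these by checking that the proposed expression satisfies the corresponding first-order recurrence together with its prescribed initial value, and then invoke uniqueness of the solution of that recurrence.

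First I would record how $u_n$ behaves under the three-term recurrence. For $n\ge1$ both $Q_n(0)$ and $Q_n^{(0)}(0)$ satisfy \eqref{QQs} at $x=0$, i.e.\ $0=\lambda_n f_{n+1}-(\lambda_n+\mu_n)f_n+\mu_n f_{n-1}$; hence their linear combination satisfies it as well, giving
\[
\lambda_n u_{n+1}=(\lambda_n+\mu_n)\,u_n-\mu_n u_{n-1},\qquad n\ge1.
\]
Next I would compute the low-order values from $Q_0(0)=1$, $Q_1(0)=1+\mu_0/\lambda_0$ (see \eqref{Qn0}) and $Q_0^{(0)}(0)=0$, $Q_1^{(0)}(0)=-1/\lambda_0$ (see \eqref{iccsqn0} and \eqref{Qn00}), which yield $u_0=1$ and $u_1=(\lambda_0+\mu_0-x_0-\tilde\mu_0)/\lambda_0$. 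In particular $u_0\ne0$ and $-u_1/u_0=-1-\mu_0/\lambda_0+(x_0+\tilde\mu_0)/\lambda_0$, which is exactly the prescribed value $q_1$ in \eqref{deqqq2} (equivalently \eqref{qq1}).

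With these in hand, verifying \eqref{deqqq2} for $q_n=-u_n/u_{n-1}$ is routine: dividing the displayed recurrence for $u_n$ by $-\lambda_n u_n$ and using $u_{n-1}/u_n=-1/q_n$ gives
\[
-\frac{u_{n+1}}{u_n}=-1-\frac{\mu_n}{\lambda_n}+\frac{\mu_n}{\lambda_n}\cdot\frac{u_{n-1}}{u_n}=-1-\frac{\mu_n}{\lambda_n}-\frac{\mu_n}{\lambda_n q_n},
\]
so that $q_{n+1}:=-u_{n+1}/u_n$ satisfies \eqref{deqqq2}; since the initial values agree, induction on $n$ yields $q_n=-u_n/u_{n-1}$ for all $n\ge1$, the first formula in \eqref{solqt2}. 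For the second, from \eqref{deqtt2} we have $t_0=-\tilde\mu_0=-\tilde\mu_0 u_0$, and if $t_{n-1}=-\tilde\mu_0 u_{n-1}$ then
\[
t_n=-q_n t_{n-1}=-\Bigl(-\frac{u_n}{u_{n-1}}\Bigr)\bigl(-\tilde\mu_0 u_{n-1}\bigr)=-\tilde\mu_0 u_n,
\]
and induction completes the proof.

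The one delicate point is that the recurrence for $u_n$ holds only from $n\ge1$, since $Q_n^{(0)}(0)$ solves \eqref{QQs} only for $n\ge1$ (at $n=0$ the associated polynomial obeys a shifted initial relation). Consequently the base case $q_1$ cannot be read off from that recurrence and must be computed directly from the initial data, as above; this is precisely where the free parameter $x_0$ enters, forcing the two-parameter combination $u_n$ rather than a multiple of $Q_n(0)$ alone. I would also note in passing that the formulas are meaningful exactly when $u_{n-1}\ne0$ for every $n$, which is automatic here because $q_n=r_n/s_n$ is a well-defined nonzero quantity at each step.
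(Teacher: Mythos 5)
Your proposal is correct and follows essentially the same route as the paper: the ratio substitution $q_n=-u_n/u_{n-1}$ linearizes the nonlinear recurrence \eqref{deqqq2} into the three-term recurrence \eqref{QQs} at $x=0$, the specific combination $u_n=Q_n(0)+(x_0+\tilde\mu_0)Q_n^{(0)}(0)$ is pinned down by matching the initial value $q_1$ from \eqref{qq1}, and $t_n=-\tilde\mu_0 u_n$ follows from \eqref{deqtt2} by telescoping (your induction is the same computation). Your explicit remarks on the base case $n=1$ and on nonvanishing of $u_{n-1}$ are sound and, if anything, slightly more careful than the paper's own argument.
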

\begin{proof}
If we write $q_n=-v_{n}/v_{n-1}, n\geq1,$ in \eqref{deqqq2} we have that the sequence $v_n$ is harmonic and satisfies the three-term recurrence relation
$$
\lambda_nv_{n+1}-(\lambda_n+\mu_n)v_n+\mu_nv_{n-1}=0,\quad n\geq1,\quad v_1=-q_1v_0,
$$
for some free parameter $v_0$. This is the three-term recurrence relation \eqref{QQs} satisfied by the family of birth--death polynomials $(Q_n)_{n\in\mathbb{N}_0}$ evaluated at $x=0$ but with different initial conditions. Because of this, we have to consider a linear combination of two linearly independent solutions of the three-term recurrence equation, given in this case by $Q_n(0)$ and $Q_n^{(0)}(0)$. Taking in mind that $Q_1^{(0)}(x)=-1/\lambda_0$ (see \eqref{iccsqn0}) and the definition of $q_1$ in \eqref{deqqq2}, we obtain
$$
v_n=v_0\left(Q_n(0)+\lambda_0(q_1+Q_1(0))Q_n^{(0)}(0)\right),\quad n\geq0.
$$
This solution can be written, using \eqref{qq1}, as
\begin{equation}\label{uun2}
v_n=v_0\left(Q_n(0)+(x_0+\tilde\mu_0)Q_n^{(0)}(0)\right),\quad n\geq0.
\end{equation}
Therefore we get the solution for $q_n$ by induction. On the other hand, solving directly \eqref{deqtt2}, we obtain
\[
t_n=-\tilde\mu_0\prod_{k=1}^{n}(-q_k), \quad n\ge0,
\]
with the convention that $\prod_{k=1}^{0}=1$. Using the representation for $q_n$, the initial value $t_0$ in \eqref{deqtt2} and \eqref{uun2}, the product telescopes and gives the closed form
$$
t_n =-\tilde\mu_0\frac{v_n}{v_0}=-\tilde\mu_0\left(Q_n(0)+(x_0+\tilde\mu_0)Q_n^{(0)}(0)\right),\quad n\geq0,
$$
obtaining the result.
\end{proof}
If the sequences $t_n$ and $q_n$ are known, then
\begin{equation*}
y_n=\frac{\mu_n}{r_n}=\frac{\mu_n}{q_ns_n}=\frac{\mu_nx_{n-1}}{\lambda_{n-1}q_n}\quad \Longrightarrow\quad
x_n=t_{n}-\frac{\mu_n}{\lambda_{n-1}q_n}x_{n-1},\quad n\geq1.
\end{equation*}
This gives a recursive way of obtaining explicitly $x_n$ with $x_0$ a free parameter. Indeed,
$$
x_n =-\tilde\mu_0\left(Q_n(0)+(x_0+\tilde\mu_0)Q_n^{(0)}(0)\right)+ \frac{\mu_n }{\lambda_{n-1}}\frac{Q_{n-1}(0)+(x_0+\tilde\mu_0)Q_{n-1}^{(0)}(0)}{Q_n(0)+(x_0+\tilde\mu_0)Q_n^{(0)}(0)}x_{n-1},\quad n\geq1.
$$
A straightforward computation using the definition of the potential coefficients $(\pi_n)_{n\in\mathbb{N}_0}$ in \eqref{potcoef} gives
\begin{equation}\label{xxnn2}
x_n=\frac{1}{\pi_nu_n}\left[x_0+\tilde\mu_0 -\tilde\mu_0\sum_{k=0}^n \pi_ku_k^2\right], \quad n\ge0,
\end{equation}
where $x_0$ is a \emph{free parameter} and 
\begin{equation}\label{uun}
u_n=Q_n(0)+(x_0+\tilde\mu_0)Q_n^{(0)}(0),\quad n\geq0.
\end{equation}
Once we have $x_n$ and using the initial conditions, the rest of sequences $y_n,r_n,s_n$ are obtained by the following algebraic relations:
\begin{equation}\label{algrelUL}
y_n=t_n-x_n,\quad n\geq0,\quad s_{n}=\frac{\lambda_{n-1}}{x_{n-1}},\quad r_n=q_ns_n=\frac{\lambda_{n-1}q_n}{x_{n-1}},\quad n\geq1.
\end{equation}
In other words,
\begin{align}
\label{yynn2}y_n&=-\frac{1}{\pi_nu_n}\left[x_0 +\tilde\mu_0-\tilde\mu_0\sum_{k=0}^{n-1} \pi_ku_k^2\right],\quad n\ge0,\\
\label{ssnn2}s_n&=\frac{\lambda_{n-1}\pi_{n-1}u_{n-1}}{x_0 +\tilde\mu_0-\tilde\mu_0\displaystyle\sum_{k=0}^{n-1} \pi_ku_k^2},\quad n\ge1,\quad s_0=1,\\
\label{rrnn2}r_n&=-\frac{\mu_{n}\pi_{n}u_{n}}{x_0+\tilde\mu_0 -\tilde\mu_0\displaystyle\sum_{k=0}^{n-1} \pi_ku_k^2},\quad n\ge1.
\end{align}
From \eqref{AU_AL_eq2}, \eqref{algrelUL}, \eqref{solqt2} and \eqref{uun} we can write the birth--death rates of the Darboux transformation $\widetilde{\mathcal{A}}$ in terms of the original birth--death rates and the sequences $x_n$ and $u_n$, that is,
\begin{equation}\label{newbdcoe2}
\tilde\lambda_n=\lambda_{n-1}\frac{x_n}{x_{n-1}},\quad
\tilde\mu_{n+1}=\frac{\lambda_{n}\mu_{n}}{\lambda_{n-1}}\cdot\frac{u_{n+1}u_{n-1}}{u_n^2}\cdot\frac{x_{n-1}}{x_{n}},\quad n\geq0.
\end{equation}
In the previous formulas, when \(n=0\), we implicitly assume that 
\(\lambda_{-1}=x_{-1}\), which can be justified by evaluating the first equation 
in \eqref{AL_AU_eq2} at \(n=-1\), using that $s_0=1$, and that 
\(u_{-1}=(x_0+\tilde{\mu}_0)/\mu_0\), which follows from setting 
\(Q_{-1}^{(0)}(0)=1/\mu_0\). The latter identity is obtained by evaluating the three-term recurrence relation \eqref{QQs} at \(n=0\) and \(x=0\) together with the initial conditions \eqref{iccsqn0}.

%\begin{remark}
%As the LU factorization, using the formulas \eqref{Qn0} and \eqref{Qn00} for $Q_n(0)$ and $Q_n^{(0)}(0)$, respectively, give all coefficients $r_n,s_n,x_n,y_n$ in terms of the birth--death rates $\{\lambda_n,\mu_n\}$ and the corresponding potential coefficients $\pi_n$ in \eqref{potcoef}.
%\end{remark}

\begin{remark}\label{remdual2}
Observe that if we choose $\mu_0>0$ and $\tilde\mu_0=0$ with the choice of the free parameter $x_0$ as $x_0=\mu_0$, we obtain a simplified UL factorization with coefficients:
$$
s_0=1,\quad s_{n+1}=\frac{\lambda_{n}\pi_{n}}{\mu_0},\quad r_{n+1}=-\frac{\lambda_{n}\pi_{n}}{\mu_0},\quad x_n=\frac{\mu_0}{\pi_n},\quad y_n=-\frac{\mu_0}{\pi_n},\quad n\geq0.
$$
The birth--death rates of the Darboux transformation $\widetilde{\mathcal{A}}$ (or $\widetilde{\mathcal{A}}^{(0)}$
in the notation used at the beginning of Section~\ref{sec2}) are given by (see \eqref{newbdcoe2}) 
$
\hat\lambda_n=\mu_{n},\hat\mu_{n+1}=\lambda_{n},n\geq0.
$
This factorization is closely related with the family of \emph{dual polynomials} $(H_n)_{n\in\mathbb{N}_0}$ already introduced in \cite{KMc2} (for the case $\mu_0>0$). It is easy to see that in our case, these dual polynomials coincide with the family $(\tilde Q_n)_{n\in\mathbb{N}_0}$ defined by \eqref{QQbb1}. For more information, see \cite[Remark 3.12]{MDIB}, where $\widetilde{\mathcal{A}}^{(0)}$ is denoted by $\mathcal{A}^d$.
\end{remark}
\begin{remark}
The family of orthogonal polynomials $(\tilde{Q}_n)_{n\in\mathbb{N}_0}$ associated with the Darboux transformation $\widetilde{\mathcal{A}}$, and orthogonal with respect to the Geronimus transform of $\psi$ given in~\eqref{spmec2e}, can be simplified by means of the Christoffel--Darboux formulas~\eqref{CDF} and \eqref{CDF2}. From \eqref{QQbb1}, the formula $r_n =q_ns_n$, and the definition of $q_n$ in~\eqref{solqt2} (see also \eqref{uun}), we obtain
\begin{align}
\tilde{Q}_n(x)
\nonumber&= s_n \left( Q_{n}(x) - \frac{u_n}{u_{n-1}} Q_{n-1}(x) \right)
= \frac{s_n}{u_{n-1}} \left(u_{n-1}Q_n(x)-u_nQ_{n-1}(x)\right) \\[0.3em]
\nonumber&= \frac{s_n}{u_{n-1}} \left[Q_n(x)Q_{n-1}(0)-Q_n(0)Q_{n-1}(x)+(x_0+\tilde\mu_0)\left(Q_n(x)Q_{n-1}^{(0)}(0)-Q_n^{(0)}(0)Q_{n-1}(x)\right)\right] \\[0.3em]
\label{QDTUL}&=\frac{s_n}{\lambda_{n-1}\pi_{n-1}u_{n-1}}\left(x_0+\tilde\mu_0-x\sum_{k=0}^{n-1}\pi_ku_kQ_k(x)\right)= \frac{x_0+\tilde\mu_0-x\D\sum_{k=0}^{n-1}\pi_ku_kQ_k(x)}{x_0+\tilde\mu_0-\tilde\mu_0\D\sum_{k=0}^{n-1}\pi_ku_k^2}.
\end{align}
The last two steps follow from the Christoffel--Darboux formulas~\eqref{CDF} and \eqref{CDF2} evaluated at $y = 0$, together with the definition of $s_n$ given in~\eqref{ssnn2}.
\end{remark}

Up to now, we have computed the coefficients $(x_n)_{n\in\mathbb{N}_0}$, $(y_n)_{n\in\mathbb{N}_0}$, $(s_n)_{n\in\mathbb{N}_0}$, and $(r_n)_{n\in\mathbb{N}}$ given by \eqref{xxnn2} and \eqref{yynn2}--\eqref{rrnn2} so that $\widetilde{\mathcal{A}}\bm e=-\tilde{\mu}_0e^{(0)}$, that is, the sum of every row of the Darboux transform \(\widetilde{\mathcal{A}}\) equals 0 (except for the first one, see \eqref{AU_AL_eq2}). 
Now, we are ready to state the second of the main results of this paper, which provides conditions ensuring that the Darboux transform \(\widetilde{\mathcal{A}}\) defines a genuine birth--death process.

\begin{theorem}\label{theom37}
Let $\{X_t : t \ge 0\}$ be a birth--death process with birth and death rates 
$\{\lambda_n,\mu_n\}$ and infinitesimal generator $\mathcal{A}$ given by~\eqref{QQmm}.  
Consider the UL factorization $\mathcal{A}=\mathcal{A}_U\mathcal{A}_L$, where $\mathcal{A}_U$ and $\mathcal{A}_L$ are defined in~\eqref{AL_AU2}, 
with coefficients $(x_n)_{n\in\mathbb{N}_0}$, $(y_n)_{n\in\mathbb{N}_0}$, 
$(s_n)_{n\in\mathbb{N}_0}$, and $(r_n)_{n\in\mathbb{N}}$ given by \eqref{xxnn2} and \eqref{yynn2}--\eqref{rrnn2}, respectively.  
Let \(\widetilde{\mathcal{A}}=\mathcal{A}_L \mathcal{A}_U\) be the Darboux transform of $\mathcal{A}$ (see \eqref{AU_AL_eq2}) and define the series $T$ (see \eqref{uun}, \eqref{Qn0} and \eqref{Qn00}) as
\begin{equation}\label{TTT}
T=\sum_{n=0}^\infty\pi_nu_n^2=\sum_{n=0}^\infty\pi_n\left(1+(\mu_0-x_0-\tilde\mu_0)\sum_{k=0}^{n-1}\frac{1}{\lambda_k\pi_k}\right)^2>1.
\end{equation}
Then \(\widetilde{\mathcal{A}}\) is the infinitesimal generator of a new birth--death process $\{\widetilde{X}_t : t \ge 0\}$ with birth and death rates $\{\tilde\lambda_n,\tilde\mu_n\}$ given 
by~\eqref{newbdcoe2} if and only if we are in either of the following two situations:
\begin{enumerate}
\item If $T=\infty$, then $\tilde\mu_0=0,$ and the free parameter $x_0$ is subject to the following condition:
\begin{equation}\label{cond1UL}
0<x_0\leq\mu_0+\frac{1}{A},
\end{equation}
where $A$ is defined by \eqref{quantity_AB}. 
\item If $T<\infty$, then the free parameter $x_0>0$ and the absorbing rate $\tilde\mu_0\geq0$ of the Darboux-transformed process \(\widetilde{\mathcal{A}}\) satisfies
\begin{equation}\label{condBABxx}
\tilde{\mu}_0\, T \;\le\; x_0+\tilde\mu_0 \;\le\; \mu_0+\frac{1}{A},
\end{equation}
where $T$ and $A$ are defined by \eqref{TTT} and \eqref{quantity_AB}, respectively. 
\end{enumerate}
\end{theorem}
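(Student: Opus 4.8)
The starting point is that, since the coefficients in~\eqref{AL_AU2} have been constructed so that $\widetilde{\mathcal{A}}\bm e=-\tilde\mu_0e^{(0)}$ with $\tilde\mu_0\ge0$, the Darboux transform $\widetilde{\mathcal{A}}$ is the infinitesimal generator of a genuine birth--death process if and only if all of its off-diagonal entries are strictly positive, that is, $\tilde\lambda_n>0$ and $\tilde\mu_{n+1}>0$ for all $n\ge0$. Keeping $u_n$ as in~\eqref{uun}, I would introduce the partial sums
\[
E_n:=x_0+\tilde\mu_0-\tilde\mu_0\sum_{k=0}^{n}\pi_ku_k^2,\quad n\ge0,\qquad E_{-1}:=x_0+\tilde\mu_0,
\]
so that~\eqref{xxnn2} becomes $x_n=E_n/(\pi_nu_n)$, and note that $u_0=1$ and $E_0=x_0$. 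From~\eqref{newbdcoe2} together with~\eqref{ssnn2} one checks that $\tilde\lambda_0=x_0$, that $\tilde\lambda_n$ (for $n\ge1$) equals a strictly positive constant times $u_{n-1}E_n/(u_nE_{n-1})$, and that $\tilde\mu_{n+1}$ (for $n\ge0$) equals a strictly positive constant times $u_{n+1}E_{n-1}/(u_nE_n)$. A short induction on $n$ --- alternately using a $\tilde\mu$-relation to extract $u_{n+1}>0$ and a $\tilde\lambda$-relation to extract $E_{n+1}>0$, starting from $u_0=1>0$ and $E_0=x_0$ --- then shows that all these rates are strictly positive if and only if $x_0>0$, $u_n>0$ for all $n\ge0$, and $E_n>0$ for all $n\ge0$. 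Thus the theorem reduces to re-expressing these three conditions in the form~\eqref{cond1UL}--\eqref{condBABxx}.

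For the condition $u_n>0$, I would use that by~\eqref{TTT} (equivalently, by~\eqref{uun} with~\eqref{Qn0} and~\eqref{Qn00}) one has $u_n=1+(\mu_0-x_0-\tilde\mu_0)S_n$, where $S_n:=\sum_{k=0}^{n-1}1/(\lambda_k\pi_k)$ increases strictly to $A$. If $x_0+\tilde\mu_0\le\mu_0$ then $u_n\ge1>0$ for every $n$; if $x_0+\tilde\mu_0>\mu_0$ then $(u_n)$ is strictly decreasing with limit $1-(x_0+\tilde\mu_0-\mu_0)A$, and since $S_n<A$ for every finite $n$, the positivity of all $u_n$ is equivalent to $(x_0+\tilde\mu_0-\mu_0)A\le1$. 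In either case, $u_n>0$ for all $n$ is equivalent to $x_0+\tilde\mu_0\le\mu_0+1/A$, with the convention $1/A=0$ when $A=\infty$; in particular $A=\infty$ forces $x_0+\tilde\mu_0\le\mu_0$.

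For the condition $E_n>0$, if $\tilde\mu_0=0$ then $E_n\equiv x_0$, so nothing new is required; if $\tilde\mu_0>0$ then, assuming all $u_k>0$, the sequence $(E_n)$ is strictly decreasing with limit $x_0+\tilde\mu_0-\tilde\mu_0T$, and since the partial sums of $T$ are strictly below $T$, positivity of all $E_n$ is equivalent to $\tilde\mu_0T\le x_0+\tilde\mu_0$; in particular $T=\infty$ forces $\tilde\mu_0=0$. Combining the three requirements: if $T=\infty$ one must have $\tilde\mu_0=0$, and the surviving conditions become $0<x_0\le\mu_0+1/A$, which is~\eqref{cond1UL}; if $T<\infty$ they become $x_0>0$ together with $\tilde\mu_0T\le x_0+\tilde\mu_0\le\mu_0+1/A$, which is~\eqref{condBABxx}. (The case $\mu_0=0$, $A=\infty$ is then automatically excluded, since~\eqref{cond1UL} reads $0<x_0\le0$, consistent with $m_{-1}=\infty$ in~\eqref{mmm1}.)

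I expect the only genuinely delicate point to be the boundary cases: when $x_0+\tilde\mu_0=\mu_0+1/A$ (with $A<\infty$), or $\tilde\mu_0T=x_0+\tilde\mu_0$, the limiting values of $u_n$ or $E_n$ are exactly zero, so one must invoke the \emph{strict} monotonicity of the partial sums $S_n$ and $\sum_{k=0}^n\pi_ku_k^2$ to conclude that $u_n>0$ and $E_n>0$ hold at every finite index. A secondary bookkeeping issue is to keep the conventions $1/\infty=0$ and $0\cdot\infty=0$ coherent, so that~\eqref{cond1UL} and~\eqref{condBABxx} together exhaust the admissible parameter region, and to observe that outside this region the failure is already visible at a finite rate (some $u_n$ or $E_n$ turns non-positive, making $x_n=E_n/(\pi_nu_n)$ produce a non-positive $\tilde\lambda_n$ or $\tilde\mu_n$), so $\widetilde{\mathcal{A}}$ cannot be a birth--death generator.
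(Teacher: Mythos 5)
Your proposal is correct and follows essentially the same route as the paper: you reduce positivity of the Darboux rates to $x_0>0$, $u_n>0$ and positivity of the numerator $E_n$ of $x_n$ in \eqref{xxnn2}, and then convert these conditions, via the strictly increasing partial sums defining $A$ and $T$, into $x_0+\tilde\mu_0\le\mu_0+\frac{1}{A}$ and $\tilde\mu_0 T\le x_0+\tilde\mu_0$, exactly as in the paper's proof of Theorem~\ref{theom37}. The only cosmetic difference is that your induction through the ratio structure of \eqref{newbdcoe2} spells out the ``only if'' reduction a bit more explicitly than the paper's brief argument.
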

\begin{proof}
By the computation of the coefficients $(x_n)_{n\in\mathbb{N}_0}$, $(y_n)_{n\in\mathbb{N}_0}$, $(s_n)_{n\in\mathbb{N}_0}$, and $(r_n)_{n\in\mathbb{N}}$ given by \eqref{xxnn2} and \eqref{yynn2}--\eqref{rrnn2}, we have that $\widetilde{\mathcal{A}}\bm e=-\tilde{\mu}_0e^{(0)}$. \(\widetilde{\mathcal{A}}\) defines a new birth--death process if and only if \(\tilde{\lambda}_n, \tilde{\mu}_{n+1} > 0\) for all \(n \ge 0\) and \(\tilde{\mu}_0 \ge 0\). From \eqref{newbdcoe2} it follows that $\tilde\lambda_n, \tilde\mu_{n+1} > 0$ for all $n \ge 0$ if and only if $u_n > 0$ and $x_n > 0$ for all $n \ge 0$. On one side, $x_n > 0$ is a consequence of imposing $\tilde\lambda_0=x_0>0$. On the other hand, $s_1=\lambda_0/x_0$ and $r_1=\mu_1/y_0$. Therefore, since \( y_0 = -(x_0 + \tilde{\mu}_0) \), we have $s_1>0$ and $r_1<0$. This gives $q_1<0$, which implies, using \eqref{deqqq2}, that $x_0+\tilde\mu_0<\lambda_0+\mu_0.$ Therefore $u_0=1,u_1=-q_1>0$ and as a consequence we have that $u_n>0$ for all $n\geq0$.

Now, on one hand, the condition $u_n>0$ implies, using \eqref{uun}, \eqref{Qn0} and \eqref{Qn00}, that
$$
x_0+\tilde\mu_0<\mu_0+\left(\sum_{k=0}^{n-1}\frac{1}{\lambda_k\pi_k}\right)^{-1},\quad n\geq1.
$$
Since the sum is a positive increasing sequence, the inverse is positive decreasing. Therefore
\begin{equation}\label{cond1ULp}
0<x_0+\tilde\mu_0\leq\mu_0+\frac{1}{A},
\end{equation}
where $A$ is defined by \eqref{quantity_AB}. It is worth noting that, when condition \eqref{cond1ULp} holds, the jump \( M \) of the discrete part of the measure \(\widetilde{\psi}\) in \eqref{spmec2e} is nonnegative. 
This follows directly from the relation \( y_0 = -(x_0 + \tilde{\mu}_0) \) together with the expression for \( m_{-1} \) given in \eqref{mmm1}.

On the other hand, the condition $x_n>0$ for all $n\geq0$ in \eqref{xxnn2} implies, using $u_n>0$, that
$$
\tilde\mu_0<x_0\left(\sum_{k=1}^n\pi_ku_k^2\right)^{-1},\quad n\geq1.
$$
The sum above is a positive increasing sequence, so the inverse if positive decreasing. Therefore
\begin{equation}\label{cond2UL}
\tilde\mu_0\leq \frac{x_0}{T-1},
\end{equation}
where $T$ is defined by \eqref{TTT}. Observe that $T>1$ since $u_0=1$ and $\pi_0=1$. From \eqref{cond2UL} we clearly see that if $T=\infty$, then $\tilde\mu_0$ must vanish, since $x_0>0$. Hence, writing $\tilde\mu_0=0$ in \eqref{cond1ULp} gives \eqref{cond1UL}. On the other hand, if $T<\infty$, combining \eqref{cond1ULp} and \eqref{cond2UL} together, we obtain \eqref{condBABxx}.

\end{proof}

At this point, it is important to note an essential exception. If the original birth--death process is conservative and recurrent, that is, if $\mu_0 = 0$ and $A = \infty$, then condition~\eqref{cond1UL} or \eqref{condBABxx} can never be satisfied, since we are assuming $x_0 > 0$. Consequently, for conservative and recurrent birth--death processes, an UL factorization of the type considered here will not be possible.

\begin{remark}\label{rem39}
Observe that the series $T$ in \eqref{TTT} depends on the parameters $\mu_0$ and $x_0 + \tilde{\mu}_0$ through the definition of $u_n$ (see \eqref{uun}). Therefore, condition \eqref{condBABxx} should be interpreted as an algebraic relation between $x_0, \mu_0$ and $\tilde{\mu}_0$, as long as $T$ is finite, together with condition~\eqref{cond1ULp}. Depending on the character of the series $A$ and $B$ in \eqref{quantity_AB}, we may obtain additional information about the convergence of $T$ in \eqref{TTT}.  
\begin{itemize}[leftmargin=0.25in]

\item $A < \infty$ and $B = \infty$. Since $A<\infty$, the sequence $(u_n)_{n\in\mathbb{N}_0}$ is convergent and
\[
u_\infty =\lim_{n\to\infty} u_n =  1 + (\mu_0 - x_0 - \tilde{\mu}_0)A.
\]
If $u_\infty\neq 0$, then $T$ in \eqref{TTT} diverges because $B=\sum_{n=0}^{\infty}\pi_n=\infty$. Hence, we get condition \eqref{cond1UL}. If, on the other hand, $u_\infty = 0$, then \emph{there is no free parameter} $x_0$, since \(x_0 = \frac{1}{A} + \mu_0 - \tilde{\mu}_0.\) The character of $T$ then depends on the specific example. If $T=\infty$, we get again $\tilde{\mu}_0=0$, and thus \(x_0 = \frac{1}{A} + \mu_0\).  If $T<\infty$, then condition \eqref{condBABxx} becomes, using $x_0+\tilde{\mu}_0=\frac{1}{A}+\mu_0$, a condition on the absorbing rate $\tilde\mu_0\geq0$ of the Darboux-transformed process, given by
\begin{equation}\label{condBAB}
0 \le \tilde{\mu}_0 \le \frac{1+\mu_0 A}{AT}.
\end{equation}

\item $A = \infty$ and $B < \infty$. This case is possible only when $\mu_0>0$, because otherwise condition \eqref{cond1UL} forces $x_0=0$ (see the comment after the proof of Theorem \ref{theom37}).  The sequence $(u_n)_{n\in\mathbb{N}_0}$ diverges unless $x_0 = \mu_0 - \tilde{\mu}_0$, in which case $u_n\equiv 1$.  
Then $T = B < \infty$, and \eqref{condBABxx} reduces to (see also \eqref{condBAB})
\begin{equation}\label{condBAB2}
0 \le \tilde{\mu}_0 \le \frac{\mu_0}{B}.
\end{equation}
If $x_0 < \mu_0 - \tilde{\mu}_0$, the series $T$ may converge or diverge.  
If $T=\infty$, then again $\tilde{\mu}_0=0$, and condition \eqref{cond1UL} becomes
\(0 < x_0 \le \mu_0.\) If $T<\infty$, then $x_0$ must lie in the range \eqref{condBABxx}.

\item $A = \infty$ and $B = \infty$. Here we also require $\mu_0>0$.  
The sequence $(u_n)_{n\in\mathbb{N}_0}$ diverges unless $x_0 = \mu_0 - \tilde{\mu}_0$, in which case $u_n\equiv 1$.  
But now $T=B=\infty$, so necessarily $\tilde{\mu}_0=0$ and $x_0=\mu_0$.  If $x_0<\mu_0$, then again $T=\infty$, so $\tilde{\mu}_0=0$ and
\(0 < x_0 \le \mu_0.\) Thus in all cases we must have $\tilde{\mu}_0=0$ and choose $x_0$ in the interval $0 < x_0 \le \mu_0$.

\item $A < \infty$ and $B < \infty$. Since $B<\infty$, the series $T$ in \eqref{TTT} is also convergent, because $(u_n)_{n\in\mathbb{N}_0}$ is increasing, positive, and bounded. Thus $x_0$ and $\tilde{\mu}_0$ must satisfy \eqref{condBABxx}. 

\end{itemize}

\end{remark}

%\begin{remark}
%The series $T$ in \eqref{TTT} is related with the series $S$ in \eqref{SSS} (see Remark \ref{remm1}). Since $Q_n^{(0)}(0)<0$ and $Q_n(0)>0$, then $T\leq S$. This means that, if $S<\infty$ (indeterminate Stieltjes moment problem), then $T<\infty$. On the other hand, if $T=\infty$, then $S=\infty$ and the Stieltjes moment problem will be determinate. But if $S=\infty$ we will not have information about the character of $T$.
%\end{remark}

\subsubsection{Probabilistic interpretation of the coefficients of the UL factorization} Using the results obtained in Section \ref{sec2} we may write the coefficients $x_n$, $y_n$, $s_n$, and $r_n$ in terms of expected values associated with certain stopping times. As we will see, the analysis in this setting requires a more delicate treatment. First, observe that the sequence $(u_n)_{n\in\mathbb{N}_0}$ in \eqref{uun} can be expressed, using \eqref{Qn0} and \eqref{Qn00}, as
\begin{equation}\label{uunext}
u_n = 1 + (\mu_0 - x_0 - \tilde{\mu}_0)\sum_{k=0}^{n-1}\frac{1}{\lambda_k \pi_k}, 
\quad u_0 = 1, \quad n \ge 1.
\end{equation}
Depending on the value of $x_0 + \tilde{\mu}_0$ in condition \eqref{cond1ULp}, different cases may arise:

\begin{enumerate}[leftmargin=0.3in]
\item If $0 < x_0 + \tilde\mu_0 < \mu_0$, then $\mu_0 - x_0 - \tilde\mu_0 > 0$, and we immediately have $u_n > 1$ for all $n \ge 1$. Let us set $x_0 + \tilde\mu_0 = \mu_0 - \alpha$ for some $0 < \alpha < \mu_0$. Then \eqref{uunext} can be written as
\begin{equation*}\label{uunexta}
u_n = 1 + \alpha\sum_{k=0}^{n-1}\frac{1}{\lambda_k \pi_k}, \quad n \ge 1, \quad u_0 = 1.
\end{equation*}
Observe that this expression coincides with $Q_n(0)$ in \eqref{Qn0}, except that $\mu_0$ is replaced by $\alpha$. Let $\mathcal{A}^{(\alpha)}$ denote the same infinitesimal generator as $\mathcal{A}$ in \eqref{QQmm}, but with $\mu_0$ replaced by $\alpha$. Similarly, let $\mathbb{P}^{(\alpha)}$ and $\mathbb{E}^{(\alpha)}$ denote the probability and expectation associated with the birth--death process generated by $\mathcal{A}^{(\alpha)}$. In this setting, from Lemma~\ref{lem21} and Propositions~\ref{prop22} and~\ref{prop23}, we have
\begin{align*}
\mathbb{P}_i^{(\alpha)}\left(\tau_{n+1}<\tau_{-1}\right)&=\frac{u_i}{u_{n+1}}, \quad \mathbb{E}_{n}^{(\alpha)}\left[T_{n}^{(-1)}\right]=\frac{\pi_nu_n}{\alpha},\\
\mathbb{E}_n^{(\alpha)}\left[\tau_{n+1}\mid\tau_{n+1}<\tau_{-1}\right]&=\frac{1}{\lambda_n\pi_nu_nu_{n+1}}\sum_{k=0}^n\pi_ku_k^2.
\end{align*}
With this notation, and using the definition of $x_n$ in~\eqref{xxnn2}, we obtain
\begin{equation*}\label{xxnn2prob}
x_n=\frac{\mu_0-\alpha}{\alpha\mathbb{E}_{n}^{(\alpha)}\left[T_{n}^{(-1)}\right]}
-\tilde\mu_0\frac{\lambda_n\mathbb{E}_n^{(\alpha)}\left[\tau_{n+1}\mid\tau_{n+1}<\tau_{-1}\right]}
{\mathbb{P}_0^{(\alpha)}\left(\tau_{n+1}<\tau_{-1}\right)}, \quad n\ge0.
\end{equation*}
Now, from \eqref{yynn2} and using the identity 
\begin{equation}\label{idelmxx}
\lambda_nu_{n+1}\mathbb{E}_n^{(\alpha)}\left[\tau_{n+1}\mid\tau_{n+1}<\tau_{-1}\right]
-\mu_nu_{n-1}\mathbb{E}_{n-1}^{(\alpha)}\left[\tau_{n}\mid\tau_{n+1}<\tau_{-1}\right]=u_n,
\end{equation}
we obtain
\begin{equation*}\label{yynn2prob}
y_n=-\frac{\mu_0-\alpha}{\alpha\mathbb{E}_{n}^{(\alpha)}\left[T_{n}^{(-1)}\right]}
+\tilde\mu_0\frac{\mu_n\mathbb{E}_{n-1}^{(\alpha)}\left[\tau_{n}\mid\tau_{n+1}<\tau_{-1}\right]}
{\mathbb{P}_0^{(\alpha)}\left(\tau_{n-1}<\tau_{-1}\right)}, \quad n\ge0.
\end{equation*}
Finally, from \eqref{ssnn2} and \eqref{rrnn2}, we have
\begin{equation*}\label{ssnn2prob}
s_n=\left[\frac{\mu_0-\alpha}{\alpha\lambda_{n-1}\mathbb{E}_{n-1}^{(\alpha)}\left[T_{n-1}^{(-1)}\right]}
-\tilde\mu_0\frac{\lambda_n\mathbb{E}_{n-1}^{(\alpha)}\left[\tau_{n}\mid\tau_{n+1}<\tau_{-1}\right]}
{\mathbb{P}_0^{(\alpha)}\left(\tau_{n}<\tau_{-1}\right)}\right]^{-1}, \quad n\ge1, \quad s_0=1,
\end{equation*}
and
\begin{equation*}\label{rrnn2prob}
r_n=\left[-\frac{\mu_0-\alpha}{\alpha\mu_{n}\mathbb{E}_{n}^{(\alpha)}\left[T_{n}^{(-1)}\right]}
+\tilde\mu_0\frac{\lambda_n\mathbb{E}_{n-1}^{(\alpha)}\left[\tau_{n}\mid\tau_{n+1}<\tau_{-1}\right]}
{\mathbb{P}_0^{(\alpha)}\left(\tau_{n-1}<\tau_{-1}\right)}\right]^{-1}, \quad n\ge1.
\end{equation*}

\item If $0 < x_0 + \tilde\mu_0 = \mu_0$, then we immediately have $u_n = 1$ for all $n \ge 0$. This corresponds to the limiting case of the previous situation when $\alpha \to 0$, where $\mathcal{A}^{(0)}$ becomes the infinitesimal generator of a conservative birth--death process. In this case, absorption cannot occur. Using formulas \eqref{Expx} and \eqref{idelmxx} (for $\alpha=0$ and $u_n=1$), and applying \eqref{xxnn2} and \eqref{yynn2}--\eqref{rrnn2}, we obtain
\begin{equation*}
x_n=\frac{\mu_0}{\pi_n}-\tilde\mu_0\lambda_n\mathbb{E}_n^{(0)}[\tau_{n+1}], \quad 
y_n=-\frac{\mu_0}{\pi_n}+\tilde\mu_0\mu_n\mathbb{E}_{n-1}^{(0)}[\tau_n], \quad n\ge0,
\end{equation*}
and
\begin{equation*}
s_0=1, \quad 
s_n=-r_n=\left[\frac{\mu_0}{\mu_{n}\pi_{n}}-\tilde\mu_0\mathbb{E}_{n-1}^{(0)}[\tau_{n}]\right]^{-1}, \quad n\ge1.
\end{equation*}
Observe also that in this situation we will not have a free parameter $x_0$.
\item Let us now consider the case $\mu_0 < x_0 + \tilde\mu_0 < \mu_0 + \frac{1}{A}$. Here we must make an important assumption, namely that $A < \infty$, meaning that the birth--death process is transient. Otherwise, this case reduces to the first one (1). In this situation, we have $\mu_0 - x_0 - \tilde\mu_0 < 0$, which immediately implies that $u_n < 1$ for all $n \ge 1$ (see \eqref{uunext}). Moreover, note that the limiting case $x_0 + \tilde\mu_0 \to \mu_0 + \frac{1}{A}$ yields the probability $\mathbb{P}_n(\tau_0 < \infty)$ given in \eqref{qqprob}. Therefore,
$$
\mathbb{P}_n(\tau_0 < \infty) < u_n < 1.
$$
This shows that $u_n$ can be interpreted as a kind of probability. Let us set $x_0 + \tilde\mu_0 = \mu_0 + \beta$ for some $0 < \beta < \frac{1}{A}$. Then $u_n$ can be written as 
\begin{equation*}\label{uunextb}
u_n = 1 -\beta\sum_{k=0}^{n-1}\frac{1}{\lambda_k \pi_k}, \quad n \ge 1, \quad u_0 = 1.
\end{equation*}
In case (1) (when $0 < x_0 + \tilde\mu_0 < \mu_0$), we were in a situation where the absorption rate to state $-1$ of the original birth--death process had to be modified. When $x_0 + \tilde\mu_0 \to 0$ (or equivalently $\alpha \to \mu_0$), the absorption rate to state $-1$ equals $\mu_0$ (the original one), whereas when $x_0 + \tilde\mu_0 \to \mu_0$ (or $\alpha \to 0$), the birth--death process becomes conservative, with no absorption at state $-1$. In the present case, however, we must modify the way the process approaches the other boundary point, namely $\infty$. Note that the sequence $(u_n)_{n\in\mathbb{N}_0}$ is harmonic with $u_0 = 1$ and 
\[
u_{\infty} = \lim_{n \to \infty} u_n = 1 - \beta A,\quad n\geq0.
\]
Hence, $u_n$ can be written as
\[
u_n = \mathbb{P}_n(\tau_0 < \tau_\infty) + (1 - \beta A)\, \mathbb{P}_n(\tau_\infty < \tau_0),\quad n\geq0.
\]
The quantity $u_n$ may thus be interpreted as the overall probability of absorption (either at $0$ or at $\infty$) when starting from a state $n \in \mathbb{N}_0$, where the absorption at $\infty$ is regulated by the parameter $\beta$, which controls how much probability weight is assigned to this boundary. If $\beta \to \frac{1}{A}$, only absorption at $0$ is considered. Conversely, if $\beta \to 0$, absorption may occur at either boundary, $0$ or $\infty$, with total probability one. This behavior is consistent with the condition that the birth--death process is transient in this case.

Let us call $\tau_{0,\infty}^{(\beta)}$ the event described in the previous paragraph so that we have $u_n=\mathbb{P}_n\left(\tau_{0,\infty}^{(\beta)}<\infty\right)$. In this setting, from Lemma~\ref{lem21} and Propositions~\ref{prop22} and~\ref{prop23}, we have
\begin{equation*}
\mathbb{E}_n\left[\tau_{n+1}\mid\tau_{0,\infty}^{(\beta)}<\infty\right]=\frac{1}{\lambda_n\pi_nu_nu_{n+1}}\sum_{k=0}^n\pi_ku_k^2.
\end{equation*}
Observe also, from \eqref{expocctime00}, that $u_n$ can be written as
$$
u_n=\mathbb{P}_n\left(\tau_{0,\infty}^{(\beta)}<\infty\right)=1-\frac{\beta}{\pi_n}\mathbb{E}_n[T_n^{(0)}],\quad n\geq0.
$$
With this notation, and using the definition of $x_n$ in~\eqref{xxnn2}, we obtain
\begin{equation*}
x_n=\frac{\mu_0+\beta}{\pi_n\mathbb{P}_n\left(\tau_{0,\infty}^{(\beta)}<\infty\right)}-\tilde\mu_0\lambda_n\mathbb{P}_{n+1}\left(\tau_{0,\infty}^{(\beta)}<\infty\right)\mathbb{E}_n\left[\tau_{n+1}\mid\tau_{0,\infty}^{(\beta)}<\infty\right],\; n\geq0.
\end{equation*}
Now, from \eqref{yynn2} and using the identity \eqref{idelmxx} adapted to this situation, we obtain
\begin{equation*}
y_n=-\frac{\mu_0+\beta}{\pi_n\mathbb{P}_n\left(\tau_{0,\infty}^{(\beta)}<\infty\right)}+\tilde\mu_0\mu_n\mathbb{P}_{n-1}\left(\tau_{0,\infty}^{(\beta)}<\infty\right)\mathbb{E}_{n-1}\left[\tau_{n}\mid\tau_{0,\infty}^{(\beta)}<\infty\right], \; n\ge0.
\end{equation*}
Finally, from \eqref{ssnn2} and \eqref{rrnn2}, we have
\begin{equation*}
s_n=\left[\frac{\mu_0+\beta}{\mu_n\pi_n\mathbb{P}_{n-1}\left(\tau_{0,\infty}^{(\beta)}<\infty\right)}-\tilde\mu_0\mathbb{P}_{n}\left(\tau_{0,\infty}^{(\beta)}<\infty\right)\mathbb{E}_{n-1}\left[\tau_{n}\mid\tau_{0,\infty}^{(\beta)}<\infty\right]\right]^{-1}, \; n\ge1, \; s_0=1,
\end{equation*}
and
\begin{equation*}
r_n=\left[-\frac{\mu_0+\beta}{\mu_n\pi_n\mathbb{P}_{n}\left(\tau_{0,\infty}^{(\beta)}<\infty\right)}+\tilde\mu_0\mathbb{P}_{n-1}\left(\tau_{0,\infty}^{(\beta)}<\infty\right)\mathbb{E}_{n-1}\left[\tau_{n}\mid\tau_{0,\infty}^{(\beta)}<\infty\right]\right]^{-1}, \; n\ge1.
\end{equation*}

\item As we mentioned in the previous case, if $x_0 + \tilde\mu_0 = \mu_0+\frac{1}{A}$, then we immediately obtain $u_n = \mathbb{P}_n(\tau_0 < \infty)$ for all $n \ge 0$. This corresponds to the limiting case of the previous situation when $\beta \to \frac{1}{A}$. The event $\{\tau_{0,\infty}^{(\beta)}<\infty\}$ in this situation coincides with the event $\{\tau_0<\infty\}$. Using formulas \eqref{qqprob} and \eqref{endcod2}, and applying \eqref{xxnn2} and \eqref{yynn2}--\eqref{rrnn2}, we obtain
\begin{equation*}
x_n=\frac{\mu_0+\frac{1}{A}}{\pi_n\mathbb{P}_n(\tau_0<\infty)}-\tilde\mu_0\lambda_n\mathbb{P}_{n+1}(\tau_0<\infty)\mathbb{E}_n[\tau_{n+1}\mid\tau_0<\infty],\quad n\ge0,
\end{equation*}
\begin{equation*}
y_n=-\frac{\mu_0+\frac{1}{A}}{\pi_n\mathbb{P}_n(\tau_0<\infty)}+\tilde\mu_0\mu_n\mathbb{P}_{n-1}(\tau_0<\infty)\mathbb{E}_{n-1}[\tau_{n}\mid\tau_0<\infty],\quad n\ge0,
\end{equation*}
\begin{equation*}
s_0=1, \quad 
s_n=\left[\frac{\mu_0+\frac{1}{A}}{\mu_n\pi_n\mathbb{P}_{n-1}(\tau_0<\infty)}-\tilde\mu_0\mathbb{P}_{n}(\tau_0<\infty)\mathbb{E}_{n-1}[\tau_{n}\mid\tau_0<\infty]\right]^{-1}, \quad n\ge1.
\end{equation*}
and
\begin{equation*}
r_n=\left[-\frac{\mu_0+\frac{1}{A}}{\mu_n\pi_n\mathbb{P}_{n}(\tau_0<\infty)}+\tilde\mu_0\mathbb{P}_{n-1}(\tau_0<\infty)\mathbb{E}_{n-1}[\tau_{n}\mid\tau_0<\infty]\right]^{-1}, \quad n\ge1.
\end{equation*}
Observe also that in this situation we will not have a free parameter $x_0$.
\end{enumerate}
\medskip

As in the case of the LU factorization, we can obtain further simplifications for the coefficients $x_n$, $y_n$, $s_n$, and $r_n$ by assuming $\mu_0 = 0$ and/or $\tilde\mu_0 = 0$, simply by substituting these values into the previous formulas. The most significant simplification, apart from the one described for the dual process in Remark \ref{remdual2}, arises when $\mu_0 = \tilde\mu_0 = 0$ (which is possible if and only if $A < \infty$). In this case we have $t_n=0$ and $q_n=u_n/u_{n-1}$. Therefore, choosing $\beta = x_0$ within the range $0 < \beta < \frac{1}{A}$ in cases (3) or (4), we obtain
\[
x_n = -y_n = \frac{\beta}{\pi_n\,\mathbb{P}_n\!\left(\tau_{0,\infty}^{(\beta)} < \infty\right)}, \quad
s_n = \frac{\mu_n \pi_n}{\beta}\,\mathbb{P}_{n-1}\!\left(\tau_{0,\infty}^{(\beta)} < \infty\right), \quad
r_n = -\frac{\mu_n \pi_n}{\beta}\,\mathbb{P}_{n}\!\left(\tau_{0,\infty}^{(\beta)} < \infty\right).
\]
The birth--death rates for the Darboux-transformed process \eqref{newbdcoe2} are given in this situation by 
$$
\tilde\lambda_n=\mu_n\frac{\mathbb{P}_{n-1}\!\left(\tau_{0,\infty}^{(\beta)} < \infty\right)}{\mathbb{P}_n\!\left(\tau_{0,\infty}^{(\beta)} < \infty\right)},\quad \tilde\mu_{n+1}=\lambda_{n}\frac{\mathbb{P}_{n+1}\!\left(\tau_{0,\infty}^{(\beta)} < \infty\right)}{\mathbb{P}_{n}\!\left(\tau_{0,\infty}^{(\beta)} < \infty\right)},\quad n\geq0.
$$
If $\beta = \frac{1}{A}$, the event $\{\tau_{0,\infty}^{(\beta)} < \infty\}$ should be replaced by $\{\tau_0 < \infty\}$ accordingly.

\medskip

In the following three sections we illustrate the theoretical results derived in this section by means of three representative examples: the \( M/M/1 \) queue, the \( M/M/\infty \) queue, and linear birth--death processes. These examples have been selected because the spectral measures associated with their corresponding infinitesimal generators can be explicitly computed (see \cite[Section~3.6.2]{MDIB}).

\section{The $M/M/1$ queue}\label{sec4}

Let  $\{X_t, t\geq0\}$ be the nonconservative birth--death process with constant birth--death rates 
$$
\lambda_n=\lambda,\quad n\geq0,\quad\mu_n=\mu,\quad n\geq1,\quad\lambda,\mu>0,\quad\mu_0\geq0.
$$
The infinitesimal generator \eqref{QQmm} is given by
\begin{equation}\label{A_Ex}
	\mathcal{A}=\begin{pmatrix}
		-(\lambda+\mu_0) & \lambda &  &  & \\
		\mu & -(\lambda+\mu) & \lambda &  &\\
		0 & \mu & -(\lambda+\mu) & \lambda &\\
	& & \ddots & \ddots & \ddots
	\end{pmatrix}.
\end{equation}
We allow the $M/M/1$ queue to transition to an absorbing state $-1$ with probability $\mu_0 / (\lambda + \mu_0)$. If we denote by $\psi$ the spectral measure associated with $\mathcal{A}$, then we have 
\begin{equation*}
    B(z;\psi)=\frac{1}{\lambda+\mu_0-z-\lambda\mu B(z;\psi^{(0)})},
\end{equation*}
where $B(z;\psi):=\int_0^\infty(x-z)^{-1}d\psi(x), z\in\mathbb{C}\setminus[0,\infty)$ is the \emph{Stieltjes transform} of the spectral measure $\psi$ and \( B(z;\psi^{(0)})\) the Stieltjes transform of the \emph{0-associated process} (the one obtained from $\mathcal{A}$ by removing the first row and column of $\mathcal{A}$). Using the explicit expression of \( B(z;\psi^{(0)})\) (which can be found in \cite[(3.98)]{MDIB}), we get, after some straightforward computations
\begin{equation}\label{Bzpsiex}
B(z;\psi)=\frac{\lambda - \mu - z + 2\mu_0 - \sqrt{\lambda^{2} - 2\lambda\mu - 2\lambda z + \mu^{2} - 2\mu z + z^{2}}}
{2\left[(\mu-\mu_0) z+\mu_0(\lambda  - \mu  + \mu_0)\right]}.
\end{equation}
$B(z;\psi)$ has only one single pole, given by
$$
\zeta=\mu_0\left(1-\frac{\lambda}{\mu-\mu_0}\right).
$$
Therefore, by the Perron--Stieltjes inversion formula (see \cite[Proposition 1.1]{MDIB}), we obtain that the spectral measure is given by an absolutely continuous part and a discrete part. Indeed,
\begin{equation}\label{SMEEx1}
\psi(x)=\frac{\sqrt{4\lambda\mu-(\lambda+\mu-x)^2}}{2\pi[(\mu-\mu_0)x+\mu_0(\lambda-\mu+\mu_0)]}\mathbf{1}_{\{x\in[\sigma_-,\sigma_+]\}}+\left(1-\frac{\lambda\mu}{(\mu-\mu_0)^2}\right)\delta_\zeta(x)\mathbf{1}_{\{\lambda\mu<(\mu-\mu_0)^2\}},
\end{equation}
where $\sigma_{\pm}=\left(\sqrt{\lambda}\pm\sqrt{\mu}\right)^2$, $\mathbf{1}_A$ is the indicator function and $\delta_{a}(x)=\delta(x-a)$ the Dirac delta. It is also known (see~\cite[(35)]{dlI2022}) that the orthogonal polynomials associated with this spectral measure are given by
\begin{equation*}\label{APEx1}
Q_n(x)=\left(\frac{\mu}{\lambda}\right)^{n/2}\left[U_n\left(\frac{\lambda+\mu-x}{2\sqrt{\lambda\mu}}\right)+\frac{\mu_0-\mu}{\lambda}\sqrt{\frac{\lambda}{\mu}}\,U_{n-1}\left(\frac{\lambda+\mu-x}{2\sqrt{\lambda\mu}}\right)\right],\quad n\geq 0,
\end{equation*}
where $(U_n)_{n\in\mathbb{N}_0}$ are the \emph{Chebyshev polynomials of the second kind}. Observe that when $\mu_0 = \mu$, the discrete component vanishes (there is no pole in the Stieltjes transform, see \eqref{Bzpsiex}), and the sequence of polynomials $(Q_n)_{n \in \mathbb{N}_0}$ simplifies accordingly.

The potential coefficients \eqref{potcoef} are now given by
\begin{equation}\label{potcoefx1}
\pi_n=\left(\frac{\lambda}{\mu}\right)^n,\quad n\geq0.
\end{equation}
For $\mu_0\geq0$, the value of $Q_n(0)$ in \eqref{Qn0} can be computed explicitly, and is given by
\begin{equation*}\label{Qn0x1}
Q_n(0)=\begin{cases}
1+\dfrac{\mu_0}{\lambda-\mu}\left[1-\left(\dfrac{\mu}{\lambda}\right)^n\right],&\mbox{if}\quad \lambda\neq\mu\\[1em]
1+\dfrac{\mu_0}{\lambda}n,&\mbox{if}\quad \lambda=\mu\\
\end{cases},\quad n\geq0,
\end{equation*}
while the value of $Q_n^{(0)}(0)$ in \eqref{Qn00} is given by
\begin{equation*}\label{Qn00x1}
Q_n^{(0)}(0)=\begin{cases}
\dfrac{1}{\mu-\lambda}\left[1-\left(\dfrac{\mu}{\lambda}\right)^n\right],&\mbox{if}\quad \lambda\neq\mu\\[1em]
-\dfrac{n}{\lambda},&\mbox{if}\quad \lambda=\mu\\
\end{cases},\quad n\geq0.
\end{equation*}
The quantities $A$ and $B$ in \eqref{quantity_AB} are given, using \eqref{potcoefx1}, by
\begin{equation}\label{quantABex}
A=\begin{cases}
\dfrac{1}{\lambda-\mu},&\mbox{if}\quad \mu<\lambda\\[1em]
\infty,&\mbox{if}\quad \mu\geq\lambda
\end{cases},\qquad B=\begin{cases}
\infty,&\mbox{if}\quad \mu\leq\lambda\\[1em]
\dfrac{\mu}{\mu-\lambda},&\mbox{if}\quad \mu>\lambda

\end{cases}.
\end{equation}
From these quantities, when $\mu_0 = 0$, we deduce that the process is recurrent if and only if $\mu \ge \lambda$. In this case, the process is positive recurrent whenever $\mu > \lambda$, and null recurrent when $\lambda = \mu$. When $\mu_0 > 0$, absorption at $-1$ is certain if and only if $\mu \ge \lambda$, in which case the absorption is ergodic whenever $\mu > \lambda$. In this setting, we can also compute, using the previous expressions, the following sum:
\begin{equation}\label{pqm0ex}
\sum_{k=0}^n \pi_k Q_k^2(0) =
\begin{cases}
\begin{split}
&\dfrac{\mu \mu_0^2 \left( \dfrac{\mu}{\lambda} \right)^n - \lambda (\mu - \lambda - \mu_0)^2 \left( \dfrac{\lambda}{\mu} \right)^n}{(\mu - \lambda)^3}\\[0.5em]
&\qquad - \dfrac{\mu_0(\lambda-\mu+\mu_0)(2n+1)-(\mu-\lambda)(\mu-\mu_0)+2\lambda\mu_0}{(\mu - \lambda)^2}
\end{split},
& \text{if } \;\lambda \ne \mu, \\[3em]
\dfrac{(n+1)(2\mu_0^2n^2+\mu_0(\mu_0+6\lambda)n+6\lambda^2)}{6\lambda^2}, & \text{if } \; \lambda = \mu.
\end{cases}
\end{equation}
%It is possible to see that $\sum_{k=0}^\infty \pi_k Q_k^2(0) = \infty$ for all values of $\lambda,\mu>0$ and $\mu_0\geq0$. Therefore, the Stieltjes moment problem associated with the infinitesimal generator $\mathcal{A}$ in~\eqref{A_Ex} is determinate.
%
\medskip

\hspace{-.35cm}\underline{\textbf{LU factorization}}. The coefficients $\tilde s_n, \tilde r_n, \tilde x_n,$ and $\tilde y_n$ in \eqref{ssnn} and \eqref{rrnn}-\eqref{yynn} can be explicitly computed from \eqref{potcoefx1}--\eqref{pqm0ex}, together with the probabilistic interpretation given in \eqref{ssnnprob}--\eqref{yynnprob}. The resulting expressions are generally cumbersome, except in certain particular cases. To ensure that we obtain a Darboux-transformed birth--death process with generator $\widehat{\mathcal{A}}$ (see Theorem \ref{theom33}), two situations may arise:
\begin{itemize}[leftmargin=0.25in]
\item If $\mu_0 = 0$ (conservative case), then $Q_n(0) = 1$ for all $n \ge 0$. 
Taking into account the value of $B$ in \eqref{quantABex}, we have two cases (see \eqref{condLU2}):
  \begin{itemize}[leftmargin=0.25in]
  \item If $\mu \le \lambda$, then $B = \infty$, and the parameter $\hat\mu_0$ must satisfy $0 \le \hat\mu_0 \le \lambda$.
  \item If $\mu > \lambda$, then $B = \frac{\mu}{\mu - \lambda}$, and the parameter $\hat\mu_0$ must satisfy $0 \le \hat\mu_0 \le \mu$.
  \end{itemize}
  Combining both situations, we require
  \[
  0 \le \hat\mu_0 \le \max\{\lambda, \mu\}.
  \]
\item If $\mu_0 > 0$ (nonconservative case), the parameter $\tilde\mu_0$ must satisfy \eqref{condLU0}. From \eqref{pqm0ex} it is possible to see that $S=\sum_{k=0}^\infty \pi_k Q_k^2(0) = \infty$ for all values of $\lambda,\mu>0$ and $\mu_0\geq0$. Therefore, condition \eqref{condLU0} simplifies to
  \[
  0 \le \hat\mu_0 \le \lambda + \mu_0.
  \]
\end{itemize}

Under either of these conditions, the Darboux-transformed operator
$\widehat{\mathcal{A}}$ is again the infinitesimal generator of a birth--death process, with birth and death rates $\{\hat{\lambda}_n, \hat{\mu}_n\}$ given in \eqref{newbdcoe}. The spectral measure for $\widehat{\mathcal{A}}$ is given by \eqref{spmec1}, where $\tilde y_0=-(\lambda+\mu_0)$. Therefore
\begin{equation}\label{SMEEx1DTLU}
\begin{split}
\widehat{\psi}(x)&=\frac{x\sqrt{4\lambda\mu-(\lambda+\mu-x)^2}}{2\pi(\lambda+\mu_0)[(\mu-\mu_0)x+\mu_0(\lambda-\mu+\mu_0)]}\mathbf{1}_{\{x\in[\sigma_-,\sigma_+]\}}\\
&\hspace{1cm}+\frac{\mu_0(\mu-\lambda-\mu_0)(\lambda\mu-(\mu-\mu_0)^2)}{(\mu-\mu_0)^3(\lambda+\mu_0)}\delta_\zeta(x)\mathbf{1}_{\{\lambda\mu<(\mu-\mu_0)^2\}}.
\end{split}
\end{equation}
The corresponding orthogonal polynomials can be derived from \eqref{Qnhat}, although their explicit form is generally difficult to obtain. We now examine some particular cases where further simplifications arise.

\begin{itemize}[leftmargin=0.25in]
\item Case $\mu_0 = \hat\mu_0 = 0$.  
In this case, we have $Q_n(0) = 1$. Hence, from \eqref{ssnn} and \eqref{rrnn}--\eqref{yynn}, it follows that
$$
\tilde s_n=\frac{1}{\lambda^{n}}\left(\frac{\lambda^{n+1}-\mu^{n+1}}{\lambda-\mu}\right),\;\tilde r_n=-\frac{\mu}{\lambda^{n}}\left(\frac{\lambda^{n}-\mu^{n}}{\lambda-\mu}\right),\;\tilde x_n=-\tilde y_n=\frac{(\lambda-\mu)\lambda^{n+1}}{\lambda^{n+1}-\mu^{n+1}},\quad n\geq0.
$$
The birth--death rates of the Darboux-transformed process in~\eqref{newbdcoe} are therefore
\begin{equation}\label{yawey}
\hat{\lambda}_n= \frac{\lambda^{n+2}-\mu^{n+2}}{\lambda^{n+1}-\mu^{n+1}}, \quad 
\hat{\mu}_{n+1}= \lambda\mu\left(\frac{\lambda^{n+1}-\mu^{n+1}}{\lambda^{n+2}-\mu^{n+2}}\right),\quad n\geq0,
\end{equation}
with $\hat{\lambda}_n+\hat{\mu}_n=\lambda+\mu$. The corresponding spectral measure in \eqref{SMEEx1DTLU} takes the form
\begin{equation}\label{psiabs}
\widehat{\psi}(x)=\frac{\sqrt{4\lambda\mu-(\lambda+\mu-x)^2}}{2\pi\lambda\mu}\mathbf{1}_{\{x\in[\sigma_-,\sigma_+]\}},
\end{equation}
which coincides with the spectral measure of the original birth--death process \eqref{SMEEx1} for $\mu_0=\mu$. 
The associated orthogonal polynomials $(\hat Q_n)_{n\in\mathbb{N}_0}$ are then given by 
$$
\hat Q_n(x)=\frac{\lambda^n(\lambda-\mu)}{\lambda^{n+1}-\mu^{n+1}}\left(\frac{\mu}{\lambda}\right)^{n/2}
U_n\!\left(\frac{\lambda+\mu-x}{2\sqrt{\lambda\mu}}\right),\quad n\geq0.
$$
In the special case of $\lambda<\mu$ (positive recurrent), the birth--death process has a stationary distribution, given by
\begin{equation*}\label{stdisgeo}
G_\pi=\frac{\mu-\lambda}{\mu}\left(1,\,\frac{\lambda}{\mu},\,\frac{\lambda^2}{\mu^2},\,\frac{\lambda^3}{\mu^3},\ldots\right).
\end{equation*}
Observe that $G_\pi\sim\mbox{Geo}(1-\lambda/\mu)$, that is, a geometric random variable with parameter $p=1-\lambda/\mu$. Then, following \eqref{ssnnB}, the coefficient $\tilde s_n$ can be written as
$$
\tilde s_n=\frac{F_{G_\pi}(n)}{(G_\pi)_n},\quad n\geq0,
$$
where $F_{G_\pi}(n)=\mathbb{P}(G_\pi\leq n)$ is the cumulative distribution function of the geometric random variable $G_\pi$. The birth--death rates of the Darboux-transformed process in~\eqref{yawey} can be written alternatively as
\[
\hat{\lambda}_n
 = \mu\frac{F_{G_\pi}(n+1)}{F_{G_\pi}(n)},
 \quad
\hat{\mu}_{n+1}
 = \lambda\frac{F_{G_\pi}(n)}{F_{G_\pi}(n+1)},
 \quad n\geq 0.
\]
As we mentioned after \eqref{ssnnB}, the operator $\widehat{\mathcal{A}}$ corresponding to
$\hat{\mu}_0=0$ (i.e., $\widehat{\mathcal{A}}^{(0)}$) is precisely
the \emph{Doob $h$-transform} (see~\eqref{doobt}) of the dual process $\mathcal{A}^d$ (or $\widehat{\mathcal{A}}^{(\lambda)}$), obtained by taking $h(n)=F_{G_\pi}(n)$.

%In the particular case $\lambda=\mu$, these expressions reduce to
%$$
%\hat{\lambda}_n =\lambda\frac{n+2}{n+1}, \; n\geq 0,\quad
%\hat{\mu}_n=\lambda\frac{n}{n+1}, \; n\geq 1,\quad 
%\hat{Q}_n(x)=\frac{1}{n+1}U_n\!\left(\frac{2\lambda-x}{2\lambda}\right),\quad n\geq0.
%$$
\end{itemize}
In the remaining cases where further simplifications occur (namely, $\mu_0 > 0, \hat\mu_0 = 0$ and $\mu_0 > 0, \hat\mu_0 = \lambda + \mu_0$; see the end of Section~\ref{subsec31}), explicit expressions for $\hat\lambda_n$ and $\hat\mu_n$ can still be obtained easily. However, in these cases, no additional simplifications arise for the spectral measure of the Darboux transformation or for the associated orthogonal polynomials. Nonetheless, the following situation exhibits an interesting phenomenon.

\begin{itemize}[leftmargin=0.25in]
\item For $\mu > \lambda$, let us consider the case $\mu_0 = \mu - \lambda > 0$. In this setting, the original birth--death process exhibits ergodic absorption at the state $-1$ (see \eqref{quantABex}). The corresponding spectral measure is
\begin{equation*}
\psi(x) = \frac{\sqrt{4\lambda\mu - (\lambda + \mu - x)^2}}{2\pi\lambda x}\,\mathbf{1}_{\{x\in[\sigma_-,\sigma_+]\}}.
\end{equation*}
From \eqref{ssnn} and \eqref{rrnn}--\eqref{yynn}, we obtain
$$
\tilde s_n = -\tilde r_{n+1} = \frac{\mu^{n+1} - \lambda^{n+1} - \hat\mu_0(\mu^n - \lambda^n)}{(\mu - \lambda)\lambda^n}, \quad 
\tilde x_n = \frac{\lambda}{\tilde s_n} = -\frac{\lambda}{\mu}\tilde y_n,\quad n\geq0.
$$
Hence, from \eqref{newbdcoe}, the birth--death rates of the Darboux-transformed process are
$$
\hat\lambda_n = \lambda\,\frac{\tilde s_{n+1}}{\tilde s_n},\quad 
\hat\mu_{n+1} = \mu\,\frac{\tilde s_{n}}{\tilde s_{n+1}}, \quad n \ge 0.
$$
For this transformed process, the parameter $\hat\mu_0$ can be chosen in the range $0 \le \hat\mu_0 \le \mu$.  
The associated spectral measure is $\widehat{\psi}(x) = \frac{x}{\lambda}\psi(x)$, that is, the measure in \eqref{psiabs}. The corresponding orthogonal polynomials $(\hat Q_n)_{n \in \mathbb{N}_0}$ are given by
$$
\hat Q_n(x) = \frac{(\mu - \lambda)\lambda^n}{\mu^{n+1} - \lambda^{n+1} - \hat\mu_0(\mu^n - \lambda^n)}
\left(\frac{\mu}{\lambda}\right)^{n/2}
U_n\!\left(\frac{\lambda + \mu - x}{2\sqrt{\lambda\mu}}\right), \quad n \ge 0.
$$
Two particular choices of $\hat\mu_0$ lead to especially simple results:
\begin{itemize}[leftmargin=0.25in]
\item If $\hat\mu_0 = \mu$, then $\tilde s_n = 1$, $\tilde r_n = -1$, $\tilde x_n = \lambda$, and $\tilde y_n = \mu$, so that $\hat\lambda_n = \lambda$ and $\hat\mu_n = \mu$ for all $n \ge 0$. The Darboux transformation in this situation is \emph{``almost'' invariant} (they only differ on the absorption rate since $\mu_0=\mu-\lambda$ and $\hat\mu_0=\mu$).
\item If $\hat\mu_0 = \lambda$ (which is possible since $\mu > \lambda$), then $\tilde s_n = \pi_n^{-1}$, $\tilde r_n = -\pi_n^{-1}$, $\tilde x_n = \lambda\pi_n$, and $\tilde y_n = \mu\pi_n$. In this case, $\hat\lambda_n = \mu$ and $\hat\mu_n = \lambda$ for all $n \ge 0$. This example resembles the \emph{dual process} described in Remark~\ref{remdual}, although it is not identical, as the coefficient $\tilde y_n$ differs.
\end{itemize}

\end{itemize}

\medskip

\hspace{-.35cm}\underline{\textbf{UL factorization}}. The coefficients $x_n, y_n, s_n,$ and $r_n$ in \eqref{xxnn2} and \eqref{yynn2}--\eqref{rrnn2} can be explicitly computed from \eqref{potcoefx1}--\eqref{pqm0ex}. Note that the formula for $\sum_{k=0}^n \pi_k u_k^2$, where $(u_n)_{n \in \mathbb{N}_0}$ is defined in \eqref{uun}, can be obtained from \eqref{pqm0ex} by replacing the parameter $\mu_0$ with $\mu_0 - x_0 - \tilde\mu_0$. The probabilistic interpretation of these coefficients was discussed at the end of Section~\ref{subsec32}. In general, the resulting expressions are cumbersome, except for certain particular cases.

Recall that, in this context, we have to apply Theorem \ref{theom37}, so we need to analyze the character of the series $T=\sum_{n=0}^\infty \pi_n u_n^2$ in \eqref{TTT}. A close inspection of \eqref{pqm0ex} (again replacing $\mu_0$ by $\mu_0 - x_0 - \tilde\mu_0$) shows that, in order for $T$ to be finite, we must be in one of the following two cases:

\begin{enumerate}[leftmargin=0.3in]
\item If $\lambda > \mu$, then $A<\infty$ and $B=\infty$. Therefore, we require $x_0 + \tilde\mu_0 = \mu_0 + \lambda - \mu$. In this case, it follows from \eqref{pqm0ex} that 
\[
T= \frac{\lambda}{\lambda - \mu}.
\]
Hence, using \eqref{condBABxx} (see also \eqref{condBAB}), we obtain, after some computations, that $\tilde\mu_0$ must lie in the following range:
\[
0\leq\tilde\mu_0 \le (\mu_0 + \lambda - \mu)\left(1 - \frac{\mu}{\lambda}\right).
\]

\item If $\lambda < \mu$, then $A=\infty$ and $B<\infty$. Therefore, we require $x_0 + \tilde\mu_0 = \mu_0$. In this case, it follows from \eqref{pqm0ex} that
\[
T= \frac{\mu}{\lambda - \mu}.
\]
Hence, using \eqref{condBABxx} (see also \eqref{condBAB2}), we obtain
\[
0\leq\tilde\mu_0 \le \mu_0\left(1 - \frac{\lambda}{\mu}\right).
\]
\end{enumerate}
Both cases can be summarized by the following condition:  
for $\lambda \ne \mu$, we need to fix $x_0$ in the following way
\begin{equation*}\label{rulespe0}
x_0 = \mu_0 - \tilde\mu_0 + (\lambda - \mu)\mathbf{1}_{\{\lambda > \mu\}},
\end{equation*}
and the absorption rates $\mu_0$ and $\tilde\mu_0$ must satisfy
\begin{equation}\label{rulespe}
0\leq\tilde\mu_0 \le \left(\mu_0 + (\lambda - \mu)\mathbf{1}_{\{\lambda > \mu\}}\right)\!\left(1 - \frac{\lambda}{\mu}\right).
\end{equation}

If none of these conditions hold, then $T=\infty$, and we fall into the first case of Theorem~\ref{theom37}. Consequently, $\tilde{\mu}_0 = 0$. In this situation, there is a \emph{free parameter} $x_0$, which must satisfy
\[
    0 < x_0 \leq \mu_0 + (\lambda - \mu)\,\mathbf{1}_{\{\lambda > \mu\}}.
\]

Under these conditions, the Darboux transformation $\widetilde{\mathcal{A}}$ becomes the infinitesimal generator of a new birth--death process with birth--death rates $\{\tilde{\lambda}_n, \tilde{\mu}_n\}$ given by \eqref{newbdcoe2}. The corresponding spectral measure of $\widetilde{\mathcal{A}}$ is given by the Geronimus transform of $\psi$ in \eqref{spmec2e}, where $y_0 = -(x_0 + \tilde\mu_0)$. For that, we compute the moment $m_{-1}$ of the spectral measure $\psi$. This quantity can be obtained from the Stieltjes transform of $\psi$ in \eqref{Bzpsiex} evaluated at $z = 0$ or formula \eqref{mmm1}, yielding
\[
m_{-1} = \int_0^\infty x^{-1}\, d\psi(x)
%= \frac{\lambda - \mu + 2\mu_0 - |\lambda - \mu|}{2\mu_0(\lambda - \mu + \mu_0)}
= 
\begin{cases}
\dfrac{1}{\lambda - \mu + \mu_0}, & \text{if } \lambda > \mu, \\[1em]
\dfrac{1}{\mu_0}, & \text{if } \lambda \leq \mu.
\end{cases}
\]
Hence, the transformed spectral measure of the Darboux transformation takes the form
\begin{equation}\label{SMEEx1DTUL}
\begin{split}
\widetilde{\psi}(x)
&= 
\frac{(x_0 + \tilde\mu_0)\sqrt{4\lambda\mu - (\lambda + \mu - x)^2}}
{2\pi x[(\mu - \mu_0)x + \mu_0(\lambda - \mu + \mu_0)]}
\, \mathbf{1}_{\{x\in[\sigma_-,\sigma_+]\}} \\[0.5em]
&\qquad + 
\frac{(x_0 + \tilde\mu_0)(\lambda\mu - (\mu - \mu_0)^2)}
{\mu_0(\mu - \mu_0)(\lambda - \mu + \mu_0)}
\, \delta_\zeta(x)\, \mathbf{1}_{\{\lambda\mu < (\mu - \mu_0)^2\}}+
\left(1 - \frac{x_0 + \tilde\mu_0}{\mu_0 + (\lambda - \mu)\mathbf{1}_{\{\lambda > \mu\}}}\right)
\delta_0(x).
\end{split}
\end{equation}
Observe that the measure $\widetilde{\psi}$ includes two discrete components, and the coefficient of the Dirac delta at $x = 0$ is always nonnegative due to condition \eqref{cond1ULp}. The corresponding orthogonal polynomials can be obtained from \eqref{QDTUL} (see also \eqref{QQbb1}); however, their explicit form is generally difficult to express in closed form. We now turn to some particular cases where further simplifications occur.

\begin{itemize}[leftmargin=0.25in]
\item Case $\mu_0 = \tilde\mu_0 = 0$. In this situation, we have $Q_n(0) = 1$. This case is possible if and only if $A < \infty$ (otherwise $x_0$ would have to vanish, which is not allowed). Hence, the free parameter $x_0$ can be chosen in the range $0 < x_0 \le \lambda - \mu$ with $\lambda > \mu$. From \eqref{xxnn2} and \eqref{yynn2}--\eqref{rrnn2}, we then obtain
\begin{align*}
x_n &= \frac{x_0 \mu^n}{\lambda^n - x_0 \left( \dfrac{\lambda^n - \mu^n}{\lambda - \mu} \right)}, 
\quad y_n = -x_n, \quad n \ge 0,\\[0.5em]
s_0 &= 1, \quad 
s_n = \frac{1}{x_0} \frac{\lambda^n}{\mu^{n-1}} 
- \frac{\lambda}{\mu^{n-1}} \left( \frac{\lambda^{n-1} - \mu^{n-1}}{\lambda - \mu} \right),\quad 
r_n = - \frac{1}{x_0} \frac{\lambda^n}{\mu^{n-1}} 
+ \frac{1}{\mu^{n-1}} \left( \frac{\lambda^n - \mu^n}{\lambda - \mu} \right), \quad n \ge 1.
\end{align*}
The birth--death rates of the Darboux-transformed process given by \eqref{newbdcoe2} are therefore
\begin{equation*}
\begin{split}
\tilde\lambda_n 
&= \frac{\lambda \mu \left( \lambda^{n-1} (\lambda - \mu) - x_0 (\lambda^{n-1} - \mu^{n-1}) \right)}
{\lambda^n (\lambda - \mu) - x_0 (\lambda^n - \mu^n)}, \quad n \ge 1, 
\quad \tilde\lambda_0 = x_0, \\[0.5em]
\tilde\mu_{n+1} 
&= \frac{\lambda^{n+1} (\lambda - \mu) - x_0 (\lambda^{n+1} - \mu^{n+1})}
{\lambda^{n} (\lambda - \mu) - x_0 (\lambda^{n} - \mu^{n})}, \quad n \ge 0,
\end{split}
\end{equation*}
so that $\tilde\lambda_n \tilde\mu_n = \lambda \mu$ for all $n \ge 1$. The corresponding spectral measure in \eqref{SMEEx1DTUL} takes the form
\begin{equation*}
\widetilde{\psi}(x)
= \frac{x_0 \sqrt{4\lambda\mu - (\lambda + \mu - x)^2}}
{2\pi \mu x^2} \mathbf{1}_{\{x\in[\sigma_-,\sigma_+]\}}
+ \left(1 - \frac{x_0}{\lambda - \mu}\right) \delta_0(x).
\end{equation*}
The associated orthogonal polynomials $(\tilde Q_n)_{n \in \mathbb{N}_0}$ can, in principle, be computed from \eqref{QDTUL}, although their explicit expressions generally do not simplify except in special cases. If we take, for instance, $x_0 = \lambda - \mu$, the discrete part of $\widetilde{\psi}$ vanishes, and we obtain
\[
x_n = \lambda - \mu = -y_n, \quad n\geq0,\quad s_0 = 1, \quad 
s_n = \frac{\lambda}{\lambda - \mu}, \quad 
r_n = -\frac{\mu}{\lambda - \mu},\quad n\geq1.
\]
Consequently,
\[
\tilde\lambda_0 = \lambda - \mu, \quad 
\tilde\lambda_n = \lambda, \quad 
\tilde\mu_n = \mu, \quad n \ge 1.
\]
Thus, the Darboux transformation is \emph{almost invariant}, differing from the original process only in the first pair of rates ($\tilde\lambda_0 = \lambda - \mu$, $\tilde\mu_0 = 0$). In this case, we can derive an explicit expression for the corresponding orthogonal polynomials $(\tilde Q_n)_{n \in \mathbb{N}_0}$. From \eqref{QQbb1}, we obtain
\begin{align*}
\tilde Q_n(x) 
&= \frac{1}{\lambda - \mu} \bigl( \lambda Q_n(x) - \mu Q_{n-1}(x) \bigr) \\[0.3em]
&= \frac{1}{\lambda - \mu} \left( \frac{\mu}{\lambda} \right)^{n/2}
\left[
\lambda U_n\!\left(\frac{\lambda+\mu-x}{2\sqrt{\lambda\mu}}\right)
- 2\sqrt{\lambda\mu}\, U_{n-1}\!\left(\frac{\lambda+\mu-x}{2\sqrt{\lambda\mu}}\right)
+ \mu U_{n-2}\!\left(\frac{\lambda+\mu-x}{2\sqrt{\lambda\mu}}\right)
\right].
\end{align*}
Using the three-term recurrence relation for the Chebyshev polynomials of the second kind,  
\( U_{n+1}(y) = 2y\,U_n(y) - U_{n-1}(y) \),  with \( y = \frac{\lambda + \mu - x}{2\sqrt{\lambda\mu}} \),
together with the identity \( 2T_n(y) = U_n(y) - U_{n-2}(y) \), where $(T_n)_{n\in\mathbb{N}_0}$ are the \emph{Chebyshev polynomials of the first kind}, we obtain, for $n \ge 0$,
\begin{equation*}
\tilde Q_n(x) = \frac{1}{\lambda - \mu} \left( \frac{\mu}{\lambda} \right)^{n/2} 
\left[
-2\mu\, T_n\!\left(\frac{\lambda+\mu-x}{2\sqrt{\lambda\mu}}\right)
+ (\lambda + \mu)\, U_n\!\left(\frac{\lambda+\mu-x}{2\sqrt{\lambda\mu}}\right)
- 2\sqrt{\lambda\mu}\, U_{n-1}\!\left(\frac{\lambda+\mu-x}{2\sqrt{\lambda\mu}}\right)
\right].
\end{equation*}
These polynomials are known as \emph{perturbed Chebyshev polynomials}.  Further details can be found in~\cite[p.~204]{Ch}.

\item Let us now consider the case in which absorption at state $-1$ is certain, that is, when $A = \infty$.  
From \eqref{quantABex}, this requires $\mu \ge \lambda$. Moreover, we focus on the situation where  
$T < \infty$. This condition holds when $\mu > \lambda$ and 
$x_0 = \mu_0 - \tilde\mu_0$, in which case the parameters $\mu_0$ and $\tilde\mu_0$ must satisfy (see \eqref{rulespe})
\[
0\leq\tilde\mu_0 \le \mu_0 \left( 1 - \frac{\lambda}{\mu} \right).
\]
For the special case $\tilde\mu_0 = \mu_0 \left( 1 - \frac{\lambda}{\mu} \right)$, we have $x_0 = \dfrac{\mu_0 \lambda}{\mu}$, and the coefficients $x_n, y_n, s_n,$ and $r_n$ are constant. Indeed,
\[
x_n = \frac{\mu_0 \lambda}{\mu}, \quad y_n = -\mu_0, \quad n \ge 0, 
\quad s_0 = 1, \quad s_n = \frac{\mu}{\mu_0}, \quad r_n = -\frac{\mu}{\mu_0}, \quad n \ge 1.
\]
The corresponding birth--death rates of the Darboux-transformed process, given by \eqref{newbdcoe2}, are therefore
\[
\tilde\lambda_0 = \frac{\mu_0 \lambda}{\mu}, \quad 
\tilde\lambda_n = \lambda, \quad 
\tilde\mu_n = \mu, \quad n \ge 1.
\]
As before, the Darboux transformation is \emph{almost invariant}: the transformed process coincides with the original one except for the initial rates.  
Here, however, we have $\mu_0 > 0$, $\tilde\mu_0 = \mu_0 (1 - \lambda / \mu) > 0$, and $\tilde\lambda_0 = \mu_0 \lambda / \mu$ with $\mu > \lambda$. The corresponding spectral measure is given by \eqref{SMEEx1DTUL}.  
In this case, the measure has no discrete component at $x = 0$, and a discrete mass at $x = \zeta$ appears only when $\mu_0 > \mu$ or $\mu_0 < \mu - \lambda$.

\end{itemize}

\section{The $M/M/\infty$ queue}\label{sec5}

Let $\{X_t, t\geq0\}$ be the birth--death process with constant birth and linear death rates of the form
$$
\lambda_n=\lambda,\quad n\geq0,\quad\mu_n=n\mu,\quad n\geq0,\quad\lambda,\mu>0.
$$
%The infinitesimal generator \eqref{QQmm} in this case is given by
%\begin{equation*}\label{A_Ex_i}
%	\mathcal{A}=\begin{pmatrix}
%		-\lambda & \lambda &  &  & \\
%		\mu & -(\lambda+\mu) & \lambda &  &\\
%		0 & 2\mu & -(\lambda+2\mu) & \lambda &\\
%	& & \ddots & \ddots & \ddots
%	\end{pmatrix}.
%\end{equation*}
The potential coefficients are given by
\begin{equation*}
    \pi_n=\frac{(\lambda/\mu)^n}{n!}, \quad n\geq 0.
\end{equation*}
It is very well-known (see \cite[Section 3]{KMc4} or \cite[Example 3.36]{MDIB}) that the spectral measure associated with the infinitesimal generator $\mathcal{A}$ is given by
\[
\psi(x)=e^{-\lambda/\mu}\sum_{n=0}^{\infty}\frac{(\lambda/\mu)^{n}}{n!}\delta_{x_n}(x),\quad x_n=\mu n,
\]
where $\delta_a(x)=\delta(x-a)$ is the Dirac delta. Observe that the support of the measure is $\{0,\mu,2\mu,\ldots\}$ and the jumps correspond to the \emph{Poisson distribution} with parameter $\lambda/\mu$. The corresponding orthogonal polynomials can be written in terms of the \emph{Charlier polynomials} $C_n(x;a),n\geq0$. Indeed,
\[
Q_n(x)=C_n\left(\frac{x}{\mu};\frac{\lambda}{\mu}\right),\quad n\ge0.
\]
Since the process is conservative, i.e. $\mu_0=0$, then \(Q_n(0)=1\). It is also possible to see that the quantities $A$ and $B$ in \eqref{quantity_AB} are given by $A=\infty$ and $B=e^{\lambda/\mu}$. Therefore the process is always positive recurrent with stationary distribution
\begin{equation}\label{stdis}
Y_\pi=e^{-\lambda/\mu}\left(1,\,\frac{\lambda}{\mu},\,\frac{\lambda^2}{2\mu^2},\,\frac{\lambda^3}{6\mu^3},\ldots\right).
\end{equation}
Observe that $Y_\pi\sim\mbox{Poisson}(\lambda/\mu)$. In this case, we also have that
\begin{equation}\label{posss}
\sum_{k=0}^n\pi_k=e^{\lambda/\mu}\mathbb{P}(Y_\pi\leq n)=e^{\lambda/\mu}\frac{\Gamma(n+1,\lambda/\mu)}{n!},
\end{equation}
where $\Gamma(s,x)=\int_x^\infty t^{s-1}e^{-t}dt$ is the \emph{upper incomplete Gamma function}. We also have that $F_{Y_\pi}(n)=\mathbb{P}(Y_\pi\leq n)$ is the cumulative distribution function of the Poisson random variable $Y_\pi$ (stationary distribution).

\medskip

\hspace{-.35cm}\underline{\textbf{LU factorization}}. The coefficients $\tilde s_n, \tilde r_n, \tilde x_n,$ and $\tilde y_n$ in \eqref{ssnn} and \eqref{rrnn}--\eqref{yynn} can be explicitly computed, together with the probabilistic interpretation given in \eqref{ssnnprob}--\eqref{yynnprob}. Given that \(\mu_0=0\), we have that $\tilde q_n=-1$ and $\tilde t_n=1-\hat\mu_0/\lambda, \tilde t_0=1$. Therefore
\begin{equation}\label{srxymmi}
\begin{split}
\tilde s_n&=\frac{\hat\mu_0\mu^nn!}{\lambda^{n+1}}+\left(1-\frac{\hat\mu_0}{\lambda}\right)\frac{\mu^n}{\lambda^n}e^{\lambda/\mu}\Gamma(n+1,\lambda/\mu),\quad n\geq0,\\
\tilde r_n&=-\frac{\hat\mu_0\mu^nn!}{\lambda^{n+1}}-\left(1-\frac{\hat\mu_0}{\lambda}\right)\frac{n\mu^n}{\lambda^n}e^{\lambda/\mu}\Gamma(n,\lambda/\mu),\quad n\geq1,\\
\tilde x_n&=\frac{\lambda}{\tilde s_n}=-\tilde y_n,\quad n\geq0.
\end{split}
\end{equation}
These coefficients can also be expressed in terms of the components of the stationary distribution \(Y_\pi\) in \eqref{stdis} and its cumulative distribution function \(F_{Y_\pi}(n)\), as we mentioned in \eqref{ssnnB}. In this setting, the coefficient \(\tilde{s}_n\) can also be written as
$$
\tilde{s}_n 
= \frac{1}{(Y_\pi)_n}\left[\,F_{Y_\pi}(n) 
   + \frac{\hat{\mu}_0}{\lambda}\bigl(e^{-\lambda/\mu} - F_{Y_\pi}(n)\bigr)\right],\quad n\geq0.
$$
With this, we can compute the coefficients $\hat\lambda_n,\hat\mu_{n+1},n\geq0,$ in \eqref{newbdcoe}, given by
$$
\hat\lambda_n=\lambda\frac{\tilde s_{n+1}}{\tilde s_n},\quad\hat\mu_{n+1}=\mu(n+1)\frac{\tilde s_n}{\tilde s_{n+1}},\quad n\geq0.
$$
In order to obtain the Darboux transformation \(\widehat{\mathcal{A}}\) as the infinitesimal operator of a new birth--death process, we apply Theorem \ref{theom33}. Since $\mu_0=0$ and $S=B$, the parameter \(\hat\mu_0\) must satisfy condition~\eqref{condLU2}, that is,
\[
0\le\hat\mu_0\le\frac{\lambda}{1-e^{-\lambda/\mu}}.
\]
As for the Darboux spectral measure, since \(\tilde y_0=-\lambda\), we have
$$
\widehat\psi(x)=\frac{x}{\lambda}\psi(x)=e^{-\lambda/\mu}\sum_{n=0}^\infty\frac{\mu n(\lambda/\mu)^n}{\lambda n!}\delta_{\mu n}(x)=e^{-\lambda/\mu}\sum_{n=0}^\infty\frac{(\lambda/\mu)^n}{n!}\delta_{\mu (n+1)}(x),
$$
that is, the same original measure \(\psi(x)\) but supported on the set $\{\mu,2\mu,3\mu,\ldots\}$ (removing the point $0$ out of the original support). Using that $\tilde y_n=-\tilde x_n$, \eqref{Qnhat} and \cite[(1.12.8)]{KS}, we obtain that the Darboux birth--death polynomials \((\hat Q_n)_{n\in\mathbb{N}_0}\) are given, as expected, by
    \begin{equation*}
        \hat Q_n(x)=\frac{\mu\tilde x_n}{\lambda}C_n\left(\frac{x}{\mu}-1;\frac{\lambda}{\mu}\right),\quad n\geq0.
        \end{equation*}
We now examine some particular cases where further simplifications arise.
\begin{itemize}[leftmargin=0.25in]
\item Case $\hat\mu_0=0$. In this case it follows, from \eqref{srxymmi}, that
$$
\tilde s_n=\frac{\mu^ne^{\lambda/\mu}}{\lambda^n}\Gamma(n+1,\lambda/\mu),\quad \tilde r_n=-\frac{n\mu^ne^{\lambda/\mu}}{\lambda^n}\Gamma(n,\lambda/\mu),\quad\tilde x_n=\frac{\lambda^{n+1}e^{-\lambda/\mu}}{\mu^n\Gamma(n+1,\lambda/\mu)}=-\tilde y_n,\quad n\geq0,
$$
and therefore
\begin{equation}\label{casehm0}
\hat\lambda_n=\mu\frac{\Gamma(n+2,\lambda/\mu)}{\Gamma(n+1,\lambda/\mu)},\quad \hat\mu_{n+1}=\lambda(n+1)\frac{\Gamma(n+1,\lambda/\mu)}{\Gamma(n+2,\lambda/\mu)},\quad n\geq0.
\end{equation}
These birth--death rates can also be written (see \eqref{adfsdf}) as
\[
\hat{\lambda}_n
 = \mu(n+1)\frac{F_{Y_\pi}(n+1)}{F_{Y_\pi}(n)},
 \quad
\hat{\mu}_{n+1}
 = \lambda\frac{F_{Y_\pi}(n)}{F_{Y_\pi}(n+1)},
 \quad n\geq 0,
\]
where $F_{Y_\pi}(n)$ denotes the cumulative distribution function of the stationary distribution $Y_\pi$ in \eqref{stdis}.

\item Case $\hat\mu_0=\lambda(1-e^{-\lambda/\mu})^{-1}$. This limiting case is also interesting since, from \eqref{srxymmi} and after some computations, we obtain
$$
\tilde s_n=\frac{\mu^n\gamma(n+1,\lambda/\mu)}{(1-e^{-\lambda/\mu})\lambda^n},\quad \tilde r_n=-\frac{n\mu^n\gamma(n,\lambda/\mu)}{(1-e^{-\lambda/\mu})\lambda^n},\quad\tilde x_n=\frac{\lambda^{n+1}(1-e^{-\lambda/\mu})}{\mu^n\gamma(n+1,\lambda/\mu)}=-\tilde y_n,\quad n\geq0,
$$
where $\gamma(s,x)=\int_0^x t^{s-1}e^{-t}dt$ is the \emph{lower incomplete Gamma function}. Therefore
\begin{equation}\label{casehmotro}
\hat\lambda_n=\mu\frac{\gamma(n+2,\lambda/\mu)}{\gamma(n+1,\lambda/\mu)},\quad \hat\mu_{n+1}=\lambda(n+1)\frac{\gamma(n+1,\lambda/\mu)}{\gamma(n+2,\lambda/\mu)},\quad n\geq0.
\end{equation}
In this case, these birth--death rates can also be written in terms of the tail distribution of the stationary distribution $Y_\pi$ in \eqref{stdis}.
\end{itemize}

As mentioned after \eqref{adfsdf}, the operator $\widehat{\mathcal{A}}$ corresponding to 
$\hat{\mu}_0 = 0$ (that is, $\widehat{\mathcal{A}}^{(0)}$) coincides with the 
\emph{Doob $h$-transform} (see~\eqref{doobt}) of the dual process $\mathcal{A}^d$ 
(or $\widehat{\mathcal{A}}^{(\lambda\beta)}$), obtained by taking $h(n) = F_{Y_\pi}(n)$. 
For the dual process, the birth--death rates are given by 
$\hat{\lambda}_n = \mu(n+\beta+1)$ and $\hat{\mu}_n = \lambda(n+1)$ for all $n \ge 0$. A similar result holds for $\hat{\mu}_0 = \lambda\,(1 - e^{-\lambda/\mu})^{-1}$, 
in which case we instead choose $h(n) = 1 - F_{Y_\pi}(n)$.

\medskip

\hspace{-.35cm}\underline{\textbf{UL factorization}}. Since \(A=\infty\) and \(\mu_0=0\), the UL factorization is not possible in this case (see the comment after the proof of Theorem \ref{theom37}). Indeed, such a factorization would force \(x_0=0\), which is not allowed. However, the factorization becomes possible if we instead apply the UL decomposition to the dual process \(\mathcal{A}^d=\widehat{\mathcal{A}}^{(\lambda)}\), whose birth--death rates are
\(\hat{\lambda}_n=\mu(n+1),\hat{\mu}_{n}=\lambda,n\geq 0\). This process can be interpreted as a queue with a single server in which the customer arrival rate grows linearly with the number of customers already present in the system. Since this analysis would take us too far afield, we do not pursue it here; details will be presented elsewhere.

\section{Linear birth--death processes}\label{sec6}

Let  $\{X_t, t\geq0\}$ be the birth--death process with linear birth--death rates 
$$
\lambda_n=(n+\beta)\lambda,\quad n\geq0,\quad\mu_n=n\mu,\quad n\geq0,\quad\lambda,\mu,\beta>0.
$$
%The infinitesimal generator \eqref{QQmm} in this case is given by
%\begin{equation*}\label{A_Ex_l}
%	\mathcal{A}=\begin{pmatrix}
%		-\lambda\beta & \lambda\beta &  &  & \\
%		\mu & -((1+\beta)\lambda+\mu) & (1+\beta)\lambda &  &\\
%		0 & 2\mu & -((2+\beta)\lambda+2\mu) & (2+\beta)\lambda &\\
%	& & \ddots & \ddots & \ddots
%	\end{pmatrix}.
%\end{equation*}
The potential coefficients are given by
\begin{equation*}
    \pi_n=\frac{(\beta)_n(\lambda/\mu)^n}{n!}, \quad n\geq 0,
\end{equation*}
where $(a)_n=a(a+1)\cdots(a+n-1), (a)_0=1,$ denotes the usual Pochhammer symbol. Observe that the infinitesimal generator \(\mathcal{A}\) is always conservative, that is, \(\mu_0 = 0\). Therefore $Q_n(0)=1$. The spectral analysis associated with \(\mathcal{A}\) can be found in \cite{KMc5} (see also \cite[Example~3.38]{MDIB}). 
For a more general treatment, including nonconservative cases related to associated Laguerre and Meixner polynomials, see \cite{ILV}.

The quantities $A$ and $B$ in~\eqref{quantity_AB} are given by
\begin{equation}\label{A_B_linear}
\begin{split}
A&=\frac{1}{\lambda\beta}\sum_{n=0}^{\infty}\frac{n!}{(\beta+1)_n}\left(\frac{\mu}{\lambda}\right)^n=\frac{1}{\lambda\beta}\,{}_{2}F_{1}\!\left(\!\begin{array}{c}
1,1 \\[-1.5mm]
\beta+1
\end{array}; \frac{\mu}{\lambda} \right),\\
B&=\sum_{n=0}^{\infty}\frac{(\beta)_n}{n!}\left(\frac{\lambda}{\mu}\right)^n={}_{1}F_{0}\!\left(\!\begin{array}{c}
\beta\\[-1.5mm]
-
\end{array}; \frac{\lambda}{\mu} \right),
\end{split}
\end{equation}
where ${_p}F_q$ is the \emph{generalized hypergeometric function} (also denoted by \({_p}F_q(a_1,\hdots,a_p;b_1,\hdots,b_q;x)\)). Depending on the values of \(\lambda\) and \(\mu\), the nature of the series \(A\) and \(B\) may vary, in some situations also depending on the parameter \(\beta\). We distinguish three cases: \(\lambda < \mu\), \(\lambda > \mu\), and \(\lambda = \mu\). Each case will be treated separately, since both the spectral measure and the associated orthogonal polynomials differ substantially among them. 
Moreover, the probabilistic behavior of the corresponding birth--death process also changes significantly across these three regimes.

\subsection{Case $\lambda<\mu$}
Since $\lambda/\mu<1$ and using $(\beta)_n\sim n!n^{\beta-1}/\Gamma(\beta)$ as $n\to\infty$, we obviously have, from \eqref{A_B_linear}, that $A=\infty$, and $B$ is the binomial series, that is,
$$
B=\left(1-\frac{\lambda}{\mu}\right)^{-\beta}.
$$
Therefore, in this situation, the birth--death process is positive recurrent. It is very well-known that the spectral measure associated with the infinitesimal generator $\mathcal{A}$ is given by
\begin{equation*}
    \psi(x)=\left(1-\frac{\lambda}{\mu}\right)^{\beta}\sum_{n=0}^{\infty}\frac{(\beta)_n (\lambda/\mu)^n}{n!}\delta_{x_n}(x), \quad x_n=(\mu-\lambda)n,
\end{equation*}
where $\delta_a(x)=\delta(x-a)$ is the Dirac delta. Observe that the support of the measure is $\{0,\mu-\lambda,2(\mu-\lambda),\ldots\}$ and the jumps correspond to the \emph{negative binomial or Pascal distribution} with parameters $r=\beta$ and $p=1-\lambda/\mu$. The corresponding orthogonal polynomials can be written in terms of the \emph{Meixner polynomials} $M_n(x;b,c),n\geq0$. Indeed,
\begin{equation*}
    Q_n(x)=M_n\left(\frac{x}{\mu-\lambda};\beta,\frac{\lambda}{\mu}\right),\quad n\geq0.
\end{equation*}
Since the process is positive recurrent, it always has a stationary distribution, given by
\begin{equation}\label{stdis2}
Z_\pi=\left(1-\frac{\lambda}{\mu}\right)^{\beta}\left(1,\,\frac{\beta\lambda}{\mu},\,\frac{\beta(\beta+1)\lambda^2}{2\mu^2},\,\frac{\beta(\beta+1)(\beta+2)\lambda^3}{6\mu^3},\ldots\right).
\end{equation}
Observe that $Z_\pi\sim\mbox{NB}(\beta,1-\lambda/\mu)$ (that is, a negative binomial random variable). In this case, we also have that
\begin{equation}\label{posss2}
\sum_{k=0}^n\pi_k=\left(1-\frac{\lambda}{\mu}\right)^{-\beta}\mathbb{P}(Z_\pi\leq n)=\left(1-\frac{\lambda}{\mu}\right)^{-\beta}\frac{\mbox{B}(1-\lambda/\mu,\beta,n+1)}{\mbox{B}(\beta,n+1)},
\end{equation}
where $\mbox{B}(x;a,b)=\int_0^xt^{a-1}(1-t)^{b-1}dt$ is the \emph{incomplete Beta function} and $\mbox{B}(a,b)=\mbox{B}(1;a,b)$ is the \emph{Beta function}. Observe that $F_{Z_\pi}(n)=\mathbb{P}(Z_\pi\leq n)$ is the cumulative distribution function of the negative binomial random variable $Z_\pi$.

\medskip

\hspace{-.35cm}\underline{\textbf{LU factorization}}. The coefficients $\tilde s_n, \tilde r_n, \tilde x_n,$ and $\tilde y_n$ in \eqref{ssnn} and \eqref{rrnn}--\eqref{yynn} can be explicitly computed, together with the probabilistic interpretation given in \eqref{ssnnprob}--\eqref{yynnprob}. Given that \(\mu_0=0\), we have that $\tilde q_n=-1$ and $\tilde t_n=1-\frac{\hat\mu_0}{\lambda\beta}, \tilde t_0=1$. Therefore, using that $\mbox{B}(\beta,n+1)=n!/(\beta)_{n+1}$, we have
\begin{equation}\label{srxymmi2}
\begin{split}
\tilde s_n&=\frac{\hat\mu_0\mu^nn!}{\beta(\beta)_n\lambda^{n+1}}+\left(1-\frac{\hat\mu_0}{\lambda\beta}\right)\frac{(n+\beta)\mu^n}{\lambda^n}\left(1-\frac{\lambda}{\mu}\right)^{-\beta}\mbox{B}(1-\lambda/\mu;\beta,n+1),\quad n\geq0,\\
\tilde r_n&=-\frac{\hat\mu_0\mu^nn!}{\beta(\beta)_n\lambda^{n+1}}-\left(1-\frac{\hat\mu_0}{\lambda\beta}\right)\frac{n\mu^n}{\lambda^n}\left(1-\frac{\lambda}{\mu}\right)^{-\beta}\mbox{B}(1-\lambda/\mu;\beta,n),\quad n\geq1,\\
\tilde x_n&=\frac{\lambda(n+\beta)}{\tilde s_n}=-\tilde y_n,\quad n\geq0.
\end{split}
\end{equation}
These coefficients can also be expressed in terms of the components of the stationary distribution \(Z_\pi\) in \eqref{stdis2} and its cumulative distribution function \(F_{Z_\pi}(n)\), as we mentioned in \eqref{ssnnB}. In this setting, the coefficient \(\tilde{s}_n\) can also be written as
$$
\tilde{s}_n 
= \frac{1}{(Z_\pi)_n}\left[\,F_{Z_\pi}(n) 
   + \frac{\hat{\mu}_0}{\lambda\beta}\left(\left(1-\frac{\lambda}{\mu}\right)^{\beta} - F_{Z_\pi}(n)\right)\right],\quad n\geq0.
$$
With this, we can compute the coefficients $\hat\lambda_n,\hat\mu_{n+1},n\geq0,$ in \eqref{newbdcoe}, given by
$$
\hat\lambda_n=\lambda(n+\beta)\frac{\tilde s_{n+1}}{\tilde s_n},\quad\hat\mu_{n+1}=\frac{\mu(n+\beta+1)(n+1)}{n+\beta}\frac{\tilde s_n}{\tilde s_{n+1}},\quad n\geq0.
$$
In order to obtain the Darboux transformation \(\widehat{\mathcal{A}}\) as the infinitesimal operator of a new birth--death process, we apply Theorem \ref{theom33}. Since $\mu_0=0$ and $S=B$, the parameter \(\hat\mu_0\) must satisfy condition~\eqref{condLU2}, that is
\[
0\le\hat\mu_0\le\frac{\lambda\beta}{1-\left(1-\dfrac{\lambda}{\mu}\right)^{\beta}}.
\]
As for the Darboux spectral measure, since \(\tilde y_0=-\lambda\beta\), we have
\begin{align*}
\widehat\psi(x)&=\frac{x}{\lambda\beta}\psi(x)=\left(1-\frac{\lambda}{\mu}\right)^{\beta}\sum_{n=0}^\infty\frac{(\mu-\lambda)n(\beta)_n(\lambda/\mu)^n}{\lambda\beta n!}\delta_{(\mu-\lambda)n}(x)\\
&=\left(1-\frac{\lambda}{\mu}\right)^{\beta+1}\sum_{n=0}^{\infty}\frac{(\beta+1)_n (\lambda/\mu)^n}{n!}\delta_{(\mu-\lambda)(n+1)}(x),
\end{align*}
that is, the same original measure \(\psi(x)\) replacing $\beta$ by $\beta+1$ and supported on the set $\{\mu-\lambda,2(\mu-\lambda),3(\mu-\lambda),\ldots\}$ (removing the point $0$ out of the original support). Using that $\tilde y_n=-\tilde x_n$, \eqref{Qnhat} and \cite[(1.9.6)]{KS}, we obtain that the Darboux birth--death polynomials \((\hat Q_n)_{n\in\mathbb{N}_0}\) are given, as expected, by
    \begin{equation*}
        \hat Q_n(x)=\frac{\tilde x_n}{\lambda\beta}M_n\left(\frac{x}{\mu-\lambda}-1;\beta+1,\frac{\lambda}{\mu}\right),\quad n\geq0.
        \end{equation*}
We now examine some particular cases where further simplifications arise.
\begin{itemize}[leftmargin=0.25in]
\item Case $\hat\mu_0=0$. In this case it follows, from \eqref{srxymmi2}, that
\begin{align*}
\tilde s_n&=\frac{(n+\beta)\mu^n}{\lambda^n}\left(1-\frac{\lambda}{\mu}\right)^{-\beta}\mbox{B}(1-\lambda/\mu;\beta,n+1),\quad n\geq0,\\
\tilde r_n&=-\frac{n\mu^n}{\lambda^n}\left(1-\frac{\lambda}{\mu}\right)^{-\beta}\mbox{B}(1-\lambda/\mu;\beta,n),\quad n\geq1,\\
\tilde x_n&=\frac{\lambda^{n+1}}{\mu^n\mbox{B}(1-\lambda/\mu;\beta,n+1)}\left(1-\frac{\lambda}{\mu}\right)^{\beta}=-\tilde y_n,\quad n\geq0,
\end{align*}
and therefore
\begin{equation}\label{casehm02}
\hat\lambda_n=\mu(n+\beta+1)\frac{\mbox{B}(1-\lambda/\mu;\beta,n+2)}{\mbox{B}(1-\lambda/\mu;\beta,n+1)},\quad \hat\mu_{n+1}=\lambda(n+1)\frac{\mbox{B}(1-\lambda/\mu;\beta,n+1)}{\mbox{B}(1-\lambda/\mu;\beta,n+2)},\quad n\geq0.
\end{equation}
These birth--death rates can also be written (see \eqref{adfsdf}) as
\[
\hat{\lambda}_n
 = \mu(n+1)\frac{F_{Z_\pi}(n+1)}{F_{Z_\pi}(n)},
 \quad
\hat{\mu}_{n+1}
 = \lambda(n+\beta+1)\frac{F_{Z_\pi}(n)}{F_{Z_\pi}(n+1)},
 \quad n\geq 0,
\]
where $F_{Z_\pi}(n)$ denotes the cumulative distribution function of the stationary distribution $Z_\pi$ in \eqref{stdis2}.

\item Case $\hat\mu_0=\lambda\beta(1-(1-\lambda/\mu)^\beta)^{-1}$. This limiting case is also interesting since, from \eqref{srxymmi2} and after some computations, we obtain
\begin{align*}
\tilde s_n&=\frac{(n+\beta)\mu^n}{\lambda^n(1-(1-\lambda/\mu)^\beta)}\mbox{C}(1-\lambda/\mu;\beta,n+1),\quad n\geq0,\\
\tilde r_n&=-\frac{n\mu^n}{\lambda^n(1-(1-\lambda/\mu)^\beta)}\mbox{C}(1-\lambda/\mu;\beta,n),\quad n\geq1,\\
\tilde x_n&=\frac{\lambda^{n+1}(1-(1-\lambda/\mu)^\beta)}{\mu^n\mbox{C}(1-\lambda/\mu;\beta,n+1)}=-\tilde y_n,\quad n\geq0,
\end{align*}
where $\mbox{C}(x;a,b)=\mbox{B}(a,b)-\mbox{B}(x;a,b)=\int_x^1t^{a-1}(1-t)^{b-1}dt$ is the \emph{complementary incomplete Beta function}. Therefore
\begin{equation}\label{casehmotro2}
\hat\lambda_n=\mu(n+\beta+1)\frac{\mbox{C}(1-\lambda/\mu;\beta,n+2)}{\mbox{C}(1-\lambda/\mu;\beta,n+1)},\quad \hat\mu_{n+1}=\lambda(n+1)\frac{\mbox{C}(1-\lambda/\mu;\beta,n+1)}{\mbox{C}(1-\lambda/\mu;\beta,n+2)},\quad n\geq0.
\end{equation}
In this case, these birth--death rates can also be written in terms of the tail distribution of the stationary distribution $Z_\pi$ in \eqref{stdis2}.
\end{itemize}

As mentioned after \eqref{adfsdf}, the operator $\widehat{\mathcal{A}}$ corresponding to 
$\hat{\mu}_0 = 0$ (that is, $\widehat{\mathcal{A}}^{(0)}$) coincides with the 
\emph{Doob $h$-transform} (see~\eqref{doobt}) of the dual process $\mathcal{A}^d$ 
(or $\widehat{\mathcal{A}}^{(\lambda\beta)}$), obtained by taking $h(n) = F_{Z_\pi}(n)$. 
For the dual process, the birth--death rates are given by 
$\hat{\lambda}_n = \mu(n+1)$ and $\hat{\mu}_n = \lambda(n+\beta)$ for all $n \ge 0$. A similar result holds for $\hat\mu_0=\lambda\beta(1-(1-\lambda/\mu)^\beta)^{-1}$, 
in which case we instead choose $h(n) = 1 - F_{Z_\pi}(n)$.

\medskip

\hspace{-.35cm}\underline{\textbf{UL factorization}}. Since \(A=\infty\) and \(\mu_0=0\), the UL factorization is not possible in this case  (see the comment after the proof of Theorem \ref{theom37}). Indeed, such a factorization would force \(x_0=0\), which is not allowed. However, the factorization becomes possible if we instead apply the UL decomposition to the dual process \(\mathcal{A}^d=\widehat{\mathcal{A}}^{(\lambda\beta)}\), whose birth--death rates are \(\hat{\lambda}_n=\mu(n+1),\hat{\mu}_{n}=\lambda(n+\beta),n\geq 0\). Since this analysis would take us too far afield, we do not pursue it here; details will be presented elsewhere.

\subsection{Case $\lambda>\mu$}

Now we have $\mu/\lambda<1$ and therefore $A<\infty$ and $B=\infty$, that is, the birth--death process is transient. The value of $A$ in \eqref{A_B_linear} is given in terms of the Gauss hypergeometric function ${_2}F_1$. Using the Euler's integral representation (see \cite[Theorem 2.2.1]{Ask}) we have that
\begin{equation}\label{AAlin}
A=\frac{1}{\lambda\beta}\,{}_{2}F_{1}\!\left(\!\begin{array}{c}
1,1 \\[-1.5mm]
\beta+1
\end{array}; \frac{\mu}{\lambda} \right)=\frac{1}{\lambda}\int_0^1\frac{(1-t)^{\beta-1}}{1-\frac{\mu}{\lambda}t}dt.
\end{equation}
In general, this expression cannot be simplified further. 
However, when \(\beta = m \in \mathbb{N}\), we obtain the following formula:
\begin{equation}\label{Aforbm}
A=\frac{1}{\lambda-\mu}\left(1-\frac{\lambda}{\mu}\right)^m\left[\log\left(1-\frac{\mu}{\lambda}\right)+\sum_{k=1}^{m-1}\frac{(-1)^k}{k}\binom{m-1}{k}\left(1-\left(1-\frac{\mu}{\lambda}\right)^{-k}\right)\right],
\end{equation}
which can be proved by induction using that, for $m=1$, $A=-\frac{1}{\mu}\log\left(1-\frac{\mu}{\lambda}\right)$ and the following formula
$$
\frac{1}{\lambda\beta}\,{}_{2}F_{1}\!\left(\!\begin{array}{c}
1,1 \\[-1.5mm]
\beta+1
\end{array}; \frac{\mu}{\lambda} \right)=\frac{1-\lambda/\mu}{\lambda(\beta-1)}\,{}_{2}F_{1}\!\left(\!\begin{array}{c}
1,1 \\[-1.5mm]
\beta
\end{array}; \frac{\mu}{\lambda} \right)+\frac{1}{\mu(\beta-1)}.
$$
It is very well-known that the spectral measure associated with the infinitesimal generator $\mathcal{A}$ is given by
\begin{equation*}
    \psi(x)=\left(1-\frac{\mu}{\lambda}\right)^{\beta}\sum_{n=0}^{\infty}\frac{(\beta)_n (\mu/\lambda)^n}{n!}\delta_{x_n}(x), \quad x_n=(\lambda-\mu)(n+\beta).
\end{equation*}
Again, the jumps correspond to the negative binomial or Pascal distribution, but now with parameters $r=\beta$ and $p=1-\mu/\lambda$. The corresponding orthogonal polynomials can be written in terms of the Meixner polynomials $M_n(x;b,c),n\geq0$. Indeed,
\begin{equation*}
    Q_n(x)=\left(\frac{\mu}{\lambda}\right)^nM_n\left(\frac{x}{\lambda-\mu}-\beta;\beta,\frac{\mu}{\lambda}\right),\quad n\geq0.
\end{equation*}
Now, however, the birth-death process is transient, so there will not be stationary distribution. Since $B=\infty$ in this case, we cannot use formula \eqref{posss2}. However, we can obtain an expression for the partial sums in terms of the Gauss hypergeometric function. In particular, we have
\begin{equation}\label{xpsm}
\frac{1}{\pi_n}\sum_{k=0}^n\pi_k={}_{2}F_{1}\!\left(\!\begin{array}{c}
1,-n \\[-1.5mm]
1-\beta-n
\end{array}; \frac{\mu}{\lambda} \right).
\end{equation}
Also, for the partial sums of $A$, since $A<\infty$, we may obtain the following expression
\begin{equation}\label{xasm}
\sum_{k=0}^{n-1}\frac{1}{\lambda_k\pi_k}
=\sum_{k=0}^{n-1}\frac{k!(\mu/\lambda)^k}{\lambda(k+\beta)(\beta)_k}=\frac{1}{\lambda\beta}\,{}_{2}F_{1}\!\left(\!\begin{array}{c}
1,1 \\[-1.5mm]
\beta+1
\end{array}; \frac{\mu}{\lambda} \right)-\frac{1}{\lambda_n\pi_n}\,{}_{2}F_{1}\!\left(\!\begin{array}{c}
1,n+1 \\[-1.5mm]
\beta+n+1
\end{array}; \frac{\mu}{\lambda} \right).
\end{equation}

\medskip

\hspace{-.35cm}\underline{\textbf{LU factorization}}. The coefficients $\tilde s_n, \tilde r_n, \tilde x_n,$ and $\tilde y_n$ in \eqref{ssnn} and \eqref{rrnn}--\eqref{yynn} can be explicitly computed, together with the probabilistic interpretation given in \eqref{ssnnprob}--\eqref{yynnprob}. Given that \(\mu_0=0\), we have that $\tilde q_n=-1$ and $\tilde t_n=1-\frac{\hat\mu_0}{\lambda\beta}, \tilde t_0=1$. Therefore, using \eqref{xpsm}, we have
\begin{equation}\label{srxymmi3}
\begin{split}
\tilde s_n&=\frac{\hat\mu_0\mu^nn!}{\beta(\beta)_n\lambda^{n+1}}+\left(1-\frac{\hat\mu_0}{\lambda\beta}\right)\,{}_{2}F_{1}\!\left(\!\begin{array}{c}
1,-n \\[-1.5mm]
1-\beta-n
\end{array}; \frac{\mu}{\lambda} \right),\quad n\geq0,\\
\tilde r_n&=-\frac{\hat\mu_0\mu^nn!}{\beta(\beta)_n\lambda^{n+1}}-\left(1-\frac{\hat\mu_0}{\lambda\beta}\right)\frac{\mu n}{\lambda(n+\beta-1)}\,{}_{2}F_{1}\!\left(\!\begin{array}{c}
1,-n+1 \\[-1.5mm]
2-\beta-n
\end{array}; \frac{\mu}{\lambda} \right),\quad n\geq1,\\
\tilde x_n&=\frac{\lambda(n+\beta)}{\tilde s_n}=-\tilde y_n,\quad n\geq0.
\end{split}
\end{equation}
With this, we can compute the coefficients $\hat\lambda_n,\hat\mu_{n+1},n\geq0,$ in \eqref{newbdcoe}, given by
$$
\hat\lambda_n=\lambda(n+\beta)\frac{\tilde s_{n+1}}{\tilde s_n},\quad\hat\mu_{n+1}=\frac{\mu(n+\beta+1)(n+1)}{n+\beta}\frac{\tilde s_n}{\tilde s_{n+1}},\quad n\geq0.
$$
In order to obtain the Darboux transformation \(\widehat{\mathcal{A}}\) as the infinitesimal operator of a new birth--death process, we apply Theorem \ref{theom33}. Since $\mu_0=0$ and $S=B=\infty$, the parameter \(\hat\mu_0\) must satisfy condition~\eqref{condLU2}, that is,
\[
0\le\hat\mu_0\le\lambda\beta.
\]
As for the Darboux spectral measure, since \(\tilde y_0=-\lambda\beta\), we have
\begin{align*}
\widehat\psi(x)&=\frac{x}{\lambda\beta}\psi(x)=\left(1-\frac{\mu}{\lambda}\right)^{\beta}\sum_{n=0}^\infty\frac{(\lambda-\mu)(n+\beta)(\beta)_n(\mu/\lambda)^n}{\lambda\beta n!}\delta_{(\lambda-\mu)(n+\beta)}(x)\\
&=\left(1-\frac{\mu}{\lambda}\right)^{\beta+1}\sum_{n=0}^{\infty}\frac{(\beta+1)_n (\mu/\lambda)^n}{n!}\delta_{(\lambda-\mu)(n+\beta)}(x),
\end{align*}
that is, the same original measure \(\psi(x)\) replacing $\beta$ by $\beta+1$. Similarly, we can obtain that the Darboux birth--death polynomials \((\hat Q_n)_{n\in\mathbb{N}_0}\), given by
    \begin{equation*}
        \hat Q_n(x)=\frac{\tilde x_n}{\lambda\beta}\left(\frac{\mu}{\lambda}\right)^nM_n\left(\frac{x}{\lambda-\mu}-\beta;\beta+1,\frac{\mu}{\lambda}\right),\quad n\geq0.
\end{equation*}
Substituting $\hat\mu_0=0$ in \eqref{srxymmi3} gives directly the coefficients $\tilde s_n, \tilde r_n, \tilde x_n,$ and $\tilde y_n$. For the other limiting case, that is, $\hat\mu_0=\lambda\beta$, we get the dual process, as discussed at the end of the previous section.

\medskip

\hspace{-.35cm}\underline{\textbf{UL factorization}}. The coefficients $x_n, y_n, s_n,$ and $r_n$ in \eqref{xxnn2} and \eqref{yynn2}--\eqref{rrnn2} can be explicitly computed from the sequence $(u_n)_{n \in \mathbb{N}_0}$, which in this case it is given, using \eqref{xasm}, by
\begin{equation}\label{uunlin}
u_n=1-(x_0+\tilde\mu_0)\left[\frac{1}{\lambda\beta}\,{}_{2}F_{1}\!\left(\!\begin{array}{c}
1,1 \\[-1.5mm]
\beta+1
\end{array}; \frac{\mu}{\lambda} \right)-\frac{1}{\lambda_n\pi_n}\,{}_{2}F_{1}\!\left(\!\begin{array}{c}
1,n+1 \\[-1.5mm]
\beta+n+1
\end{array}; \frac{\mu}{\lambda} \right)\right],\quad n\geq0.
\end{equation}
The probabilistic interpretation of these coefficients was discussed at the end of Section~\ref{subsec32}. In general, the resulting expressions are cumbersome, except for certain particular cases.

Recall that, in this context, we have to apply Theorem \ref{theom37}, so we need to analyze the character of the series $T=\sum_{n=0}^\infty \pi_n u_n^2$ in \eqref{TTT}. Since $A<\infty$ and $B=\infty$, we are in the first case discussed in Remark \ref{rem39}. If $x_0+\tilde\mu_0<1/A$, then $T=\infty$, which implies that $\tilde\mu_0=0$. Therefore we have a free parameter $x_0$ conditioned to (see \eqref{cond1UL})
\begin{equation*}\label{condexam2}
0 < x_0 \le \frac{\lambda\beta}{{}_{2}F_{1}\!\left(\!\begin{array}{c}
1,1 \\[-1.5mm]
\beta+1
\end{array}; \dfrac{\mu}{\lambda} \right)
}.
\end{equation*}
On the other hand, if $x_0+\tilde\mu_0=1/A$, then the series $T$ can be written as
$$
T=\frac{\beta^2}{{}_{2}F_{1}\!\left(\!\begin{array}{c}
1,1 \\[-1.5mm]
\beta+1
\end{array}; \dfrac{\mu}{\lambda} \right)^2}\sum_{n=0}^\infty\frac{n!}{(n+\beta)(\beta)_{n+1}}\left(\frac{\mu}{\lambda}\right)^n\,{}_{2}F_{1}\!\left(\!\begin{array}{c}
1,n+1 \\[-1.5mm]
\beta+n+1
\end{array}; \frac{\mu}{\lambda} \right)^2.
$$
Using the Euler integral for the hypergeometric function
$$
{}_{2}F_{1}\!\left(\!\begin{array}{c}
1,n+1 \\[-1.5mm]
\beta+n+1
\end{array}; \frac{\mu}{\lambda} \right)=\frac{(\beta)_{n+1}}{n!}\int_0^1\frac{t^n(1-t)^{\beta-1}}{1-\frac{\mu}{\lambda}t}dt=\frac{(\beta)_{n+1}}{n!}I_n,
$$
we can write $T$, using \eqref{AAlin}, as
$$
T=\frac{1}{\lambda^2A^2}\sum_{n=0}^\infty\left(\frac{\mu}{\lambda}\right)^n\frac{(\beta)_{n+1}}{n!}I_n^2.
$$
The series $T$ actually converges, since $\mu<\lambda$, and $\frac{(\beta)_n}{n!}\sim\frac{n^{\beta-1}}{\Gamma(\beta)}$, $I_n\sim\frac{\Gamma(\beta)}{1-\frac{\mu}{\lambda}}n^{-\beta}$, as $n\to\infty$. Therefore, following \eqref{condBABxx} (see also \eqref{condBAB}), the absorption rate $\tilde\mu_0$ can be chosen in the following range:
$$
0\leq\tilde\mu_0\leq\frac{1}{AT},
$$
which can be written, using the integral representations, as
$$
0\leq\tilde\mu_0\leq\lambda\,\frac{\D\int_0^1\dfrac{(1-t)^{\beta-1}}{1-\frac{\mu}{\lambda}t}dt}{\D\iint_{[0,1]^2}\dfrac{(1-t)^{\beta-1}(1-s)^{\beta-1}}{\left(1-\frac{\mu}{\lambda}t\right)\left(1-\frac{\mu}{\lambda}s\right)\left(1-\frac{\mu}{\lambda}st\right)}dtds}.
$$
The explicit computation of the series \(T\) lies beyond the scope of this paper. However, when \(\beta = m \in \mathbb{N}\), the sum can be expressed in terms of \emph{polylogarithms}. For illustration, we present the case \(\beta = 1\). Using \eqref{Aforbm} for $\beta=1$ and after long but straightforward computations, we obtain
$$
\frac{1}{AT}=\frac{2(\lambda-\mu)\log\left(1-\dfrac{\mu}{\lambda}\right)}{\log^2\left(1-\dfrac{\mu}{\lambda}\right)+2\mbox{Li}_2\left(\dfrac{\lambda}{\lambda-\mu}\right)},
$$
where $\mbox{Li}_2(x)=-\int_0^x\frac{\log(1-u)}{u}du=\sum_{n=1}^\infty\frac{x^n}{n^2}$ is the \emph{dilogarithm}.

\smallskip

Under the conditions considered above, the Darboux transformation $\widetilde{\mathcal{A}}$ becomes the infinitesimal generator of a new birth--death process with birth--death rates $\{\tilde{\lambda}_n, \tilde{\mu}_n\}$ given by \eqref{newbdcoe2}. The corresponding spectral measure of $\widetilde{\mathcal{A}}$ is described by the Geronimus transform of $\psi$ in \eqref{spmec2e}, where $y_0 = -(x_0 + \tilde\mu_0)$. For that, we compute the moment $m_{-1}$ of the spectral measure $\psi$ using formula \eqref{mmm1}, yielding $m_{-1}=A$. Hence, the transformed spectral measure of the Darboux transformation takes the form
\begin{equation*}
\widetilde\psi(x)=(x_0+\tilde\mu_0)\left(1-\frac{\mu}{\lambda}\right)^{\beta}\sum_{n=0}^{\infty}\frac{(\beta)_n (\mu/\lambda)^n}{n!(\lambda-\mu)(n+\beta)}\delta_{(\lambda-\mu)(n+\beta)}(x)+(1-(x_0+\tilde\mu_0)A)\delta_0(x).
\end{equation*}
The corresponding orthogonal polynomials can be obtained from \eqref{QDTUL} (see also \eqref{QQbb1}); using the following relation 
$$
{}_{2}F_{1}\!\left(\!\begin{array}{c}
1,n \\[-1.5mm]
\beta+n
\end{array}; \frac{\mu}{\lambda} \right)=1+\frac{\mu n}{\lambda(n+\beta)}{}_{2}\,F_{1}\!\left(\!\begin{array}{c}
1,n+1 \\[-1.5mm]
\beta+n+1
\end{array}; \frac{\mu}{\lambda} \right),
$$
in $u_n$ \eqref{uunlin} and after some straightforward computations, we obtain $\tilde Q_0(x)=1$ and, for $n\geq1$,
$$
\tilde Q_n(x)=s_n\left(\frac{\mu}{\lambda}\right)^{n-1}\left[\frac{x_0+\tilde\mu_0}{u_{n-1}\lambda_{n-1}\pi_{n-1}}M_{n-1}\left(\frac{x}{\lambda-\mu}-\beta;\beta,\frac{\mu}{\lambda}\right)-\frac{x}{\lambda\beta}M_{n-1}\left(\frac{x}{\lambda-\mu}-\beta;\beta+1,\frac{\mu}{\lambda}\right)\right].
$$

\subsection{Case $\lambda=\mu$}
Now, the potential coefficients only depend on $\beta$, that is, 
$$
\pi_n=\frac{(\beta)_n}{n!},\quad n\geq0.
$$
Since $\frac{(\beta)_n}{n!}\sim\frac{n^{\beta-1}}{\Gamma(\beta)}$ as $n\to\infty$, the quantities $A$ and $B$ in \eqref{A_B_linear} are given by
\begin{equation*}\label{ABlm}
A=\begin{cases}
\infty,&\mbox{if}\quad 0<\beta\leq1\\[.5em]
\dfrac{1}{\lambda(\beta-1)},&\mbox{if}\quad \beta>1\\
\end{cases},\quad B=\infty.
\end{equation*}
Therefore, if $0<\beta\leq1$, the birth-death process is null recurrent, while if $\beta>1$ it is transient. The partial sums of $A$ and $B$ can also be computed. Indeed,
\begin{equation}\label{Annnn}
\sum_{k=0}^{n-1}\frac{1}{\lambda_k\pi_k}=\frac{1}{\lambda(\beta-1)}\left[1-\frac{n!}{(\beta)_n}\right],\quad n\geq1,
\end{equation}
and 
\begin{equation}\label{Bnnnn}
\sum_{k=0}^{n}\pi_k=\frac{(\beta+1)_n}{n!}\quad\Longrightarrow\quad \frac{1}{\pi_n}\sum_{k=0}^{n}\pi_k=1+\frac{n}{\beta},\quad n\geq0.
\end{equation}
It is very well-known that the spectral measure associated with the infinitesimal generator $\mathcal{A}$ is given by the density function of the \emph{Gamma distribution} with shape parameter $\beta$ and rate parameter $1/\lambda$ (or scale parameter $\lambda$), that is,
\begin{equation*}
    \psi(x)=\frac{1}{\Gamma(\beta)\lambda^{\beta}}x^{\beta-1}e^{-x/\lambda},\quad x>0.
\end{equation*}
The corresponding orthogonal polynomials can be written in terms of the \emph{Laguerre polynomials} $L_n^{(\alpha)}(x),n\geq0$. Indeed,
\begin{equation*}
    Q_n(x)=\frac{n!}{(\beta)_n}L_{n}^{(\beta-1)}\left(\frac{x}{\lambda}\right),\quad n\geq0.
\end{equation*}

\medskip

\hspace{-.35cm}\underline{\textbf{LU factorization}}. The coefficients $\tilde s_n, \tilde r_n, \tilde x_n,$ and $\tilde y_n$ in \eqref{ssnn} and \eqref{rrnn}--\eqref{yynn} can be explicitly computed, together with the probabilistic interpretation given in \eqref{ssnnprob}--\eqref{yynnprob}. Given that \(\mu_0=0\), we have that $\tilde q_n=-1$ and $\tilde t_n=1-\frac{\hat\mu_0}{\lambda\beta}, \tilde t_0=1$. Therefore, using \eqref{Bnnnn}, we have
\begin{equation}\label{srxymmi4}
\begin{split}
\tilde s_n&=\frac{\hat\mu_0n!}{\lambda\beta(\beta)_n}+\left(1-\frac{\hat\mu_0}{\lambda\beta}\right)\left(1+\frac{n}{\beta}\right),\quad n\geq0,\\
\tilde r_n&=-\frac{\hat\mu_0n!}{\lambda\beta(\beta)_n}-\left(1-\frac{\hat\mu_0}{\lambda\beta}\right)\frac{n}{\beta},\quad n\geq1,\\
\tilde x_n&=\frac{\lambda(n+\beta)}{\tilde s_n}=-\tilde y_n,\quad n\geq0.
\end{split}
\end{equation}
With this, we can compute the coefficients $\hat\lambda_n,\hat\mu_{n+1},n\geq0,$ in \eqref{newbdcoe}, given by
$$
\hat\lambda_n=\lambda(n+\beta)\frac{\tilde s_{n+1}}{\tilde s_n},\quad\hat\mu_{n+1}=\frac{\lambda(n+\beta+1)(n+1)}{n+\beta}\frac{\tilde s_n}{\tilde s_{n+1}},\quad n\geq0.
$$
In order to obtain the Darboux transformation \(\widehat{\mathcal{A}}\) as the infinitesimal operator of a new birth--death process, we apply Theorem \ref{theom33}. Since $\mu_0=0$ and $S=B=\infty$, the parameter \(\hat\mu_0\) must satisfy condition~\eqref{condLU2}, that is
\[
0\le\hat\mu_0\le\lambda\beta.
\]
As for the Darboux spectral measure, since \(\tilde y_0=-\lambda\beta\), we have
$$
\widehat\psi(x)=\frac{x}{\lambda\beta}\psi(x)=\frac{1}{\Gamma(\beta+1)\lambda^{\beta+1}}x^{\beta}e^{-x/\lambda},\quad x>0.
$$
that is, the same Laguerre measure \(\psi(x)\) replacing $\beta$ by $\beta+1$. Similarly, we can obtain that the Darboux birth--death polynomials \((\hat Q_n)_{n\in\mathbb{N}_0}\), given by
    \begin{equation*}
        \hat Q_n(x)=\frac{\tilde x_n n!}{\lambda(\beta)_{n+1}}L_n^{(\beta)}\left(\frac{x}{\lambda}\right),\quad n\geq0.
\end{equation*}
Substituting $\hat\mu_0=0$ in \eqref{srxymmi4} gives
 \begin{equation*}
        \tilde s_n=1+\frac{n}{\beta},  \quad \tilde r_n=-\frac{n}{\beta},\quad \tilde x_n=\lambda\beta=-\tilde y_n,\quad n\geq0,
    \end{equation*}
with birth--death rates for the Darboux-transformed process given by \begin{equation*}
    \hat\lambda_n=\lambda(n+\beta+1),\quad\hat\mu_{n+1}=\lambda(n+1),\quad n\ge0.
\end{equation*}
For the other limiting case, that is, $\hat\mu_0=\lambda\beta$, we get the dual process, and the birth--death rates for the Darboux birth--death process are given by $\hat{\lambda}_n=\lambda(n+1)$ and $\hat{\mu}_{n}=\lambda(n+\beta)$ for $n\geq0$.

\medskip

\hspace{-.35cm}\underline{\textbf{UL factorization}}. Since $\mu_0=0$, we have to assume in this case that $\beta>1$. Otherwise $A=\infty$ and the UL factorization will not be possible because that would force $x_0 = 0$, which is not allowed. Therefore, we assume that $\beta>1$, so that $A=\frac{1}{\lambda(\beta-1)}$, that is, the birth-death process is transient. The coefficients $x_n, y_n, s_n,$ and $r_n$ in \eqref{xxnn2} and \eqref{yynn2}--\eqref{rrnn2} can be explicitly computed from the sequence $(u_n)_{n \in \mathbb{N}_0}$, which in this case it is given, using \eqref{Annnn}, by
$$
u_n=1-\frac{x_0+\tilde\mu_0}{\lambda(\beta-1)}\left[1-\frac{n!}{(\beta)_n}\right],\quad n\geq0.
$$
The probabilistic interpretation of these coefficients was discussed at the end of Section~\ref{subsec32}. In general, the resulting expressions are cumbersome, except for certain particular cases.

Recall that, in this context, we have to apply Theorem \ref{theom37}, so we need to analyze the character of the series $T=\sum_{n=0}^\infty \pi_n u_n^2$ in \eqref{TTT}. Since $A<\infty$ and $B=\infty$, we are in the first case discussed in Remark \ref{rem39}. If $x_0+\tilde\mu_0<\lambda(\beta-1)$, then $T=\infty$, which implies that $\tilde\mu_0=0$. Therefore we have a free parameter $x_0$ conditioned to (see \eqref{cond1UL})
\begin{equation*}\label{condexam3}
0 < x_0  \le \lambda(\beta-1).
\end{equation*}
On the other hand, if $x_0+\tilde\mu_0=\lambda(\beta-1)$, then $u_n=\pi_n^{-1},n\geq0.$ Since $\frac{(\beta)_n}{n!}\sim\frac{n^{\beta-1}}{\Gamma(\beta)}$ as $n\to\infty$, we have that the series $T$ is given by
$$
T=\sum_{n=0}^\infty \frac{1}{\pi_n}=\sum_{n=0}^\infty \frac{n!}{(\beta)_n}=\begin{cases}\infty,&\mbox{if}\quad 1<\beta\leq2\\[.5em]
\dfrac{\beta-1}{\beta-2},&\mbox{if}\quad \beta>2\end{cases}.
$$
Therefore we have two situations:
\begin{itemize}[leftmargin=0.25in]
\item If $1<\beta\leq2$, then $T=\infty$, which implies that $\tilde\mu_0=0$. In this case $x_0=\lambda(\beta-1)$.
\item If $\beta>2$, then $T=(\beta-1)/(\beta-2)$ and following \eqref{condBABxx} (see also \eqref{condBAB}), the absorption rate $\tilde\mu_0$ is subject to the restriction
\begin{equation}\label{weywey}
0\leq\tilde\mu_0\leq\lambda(\beta-2).
\end{equation}
\end{itemize}

Under the conditions considered above, the Darboux transformation $\widetilde{\mathcal{A}}$ becomes the infinitesimal generator of a new birth--death process with birth--death rates $\{\tilde{\lambda}_n, \tilde{\mu}_n\}$ given by \eqref{newbdcoe2}. The corresponding spectral measure of $\widetilde{\mathcal{A}}$ is described by the Geronimus transform of $\psi$ in \eqref{spmec2e}, where $y_0 = -(x_0 + \tilde\mu_0)$. For that, we compute the moment $m_{-1}$ of the spectral measure $\psi$ using formula \eqref{mmm1}, yielding $m_{-1}=A=\frac{1}{\lambda(\beta-1)}$. Hence, the transformed spectral measure of the Darboux transformation takes the form
\begin{equation*}
\widetilde\psi(x)=\frac{x_0+\tilde\mu_0}{\Gamma(\beta)\lambda^\beta}x^{\beta-2}e^{-x/\lambda}+\left(1-\frac{x_0+\tilde\mu_0}{\lambda(\beta-1)}\right)\delta_0(x).
\end{equation*}
The corresponding orthogonal polynomials can be obtained from \eqref{QDTUL} (see also \eqref{QQbb1}). After some computations we get
$$
\tilde Q_n(x)=\frac{s_n}{u_{n-1}\pi_{n-1}}\left[\frac{x_0+\tilde\mu_0}{\lambda(\beta-1)\pi_n}L_n^{(\beta-2)}\left(\frac{x}{\lambda}\right)-\left(1-\frac{x_0+\tilde\mu_0}{\lambda(\beta-1)}\right)\frac{x}{\lambda(\beta+n-1)}L_{n-1}^{(\beta)}\left(\frac{x}{\lambda}\right)\right],\quad n\geq0.
$$
Observe that when $x_0+\tilde\mu_0=\lambda(\beta-1)$ we obtain the Laguerre measure and polynomials with $\beta$ replaced by $\beta-1$. In this situation, we can derive an explicit form of the coefficients $x_n, y_n, s_n,$ and $r_n$ in \eqref{xxnn2} and \eqref{yynn2}--\eqref{rrnn2}. Indeed, since $u_n=\pi_n^{-1}$ in this case, we obtain
\begin{align*}
x_n &=\lambda(\beta-1)-\frac{\tilde\mu_0}{\beta-2}\left[\beta-1-\frac{(n+1)!}{(\beta)_n}\right], \quad n \ge 0,\\[0.5em]
y_n &=-x_{n-1},\quad s_{n}=\frac{\lambda(n+\beta)}{x_{n-1}},\quad n\geq0,\quad r_n=-\frac{\lambda n}{x_{n-1}},\quad n\geq1.
\end{align*}
The birth--death rates of the Darboux-transformed process given in 
\eqref{newbdcoe2} are therefore as follows (with the implicit assumption that \(x_{-1}=\lambda(\beta-1)\); see the discussion following \eqref{newbdcoe2}):
\begin{equation*}
\tilde\lambda_n=\lambda(n+\beta-1)\frac{x_n}{x_{n-1}},\quad \tilde\mu_{n+1}=\lambda(n+1)\frac{x_{n-1}}{x_n},\quad n\geq0.
\end{equation*}
Observe that, if $\tilde\mu_0=0$, then we have $x_n=\lambda(\beta-1)$, so the birth-death rates $\tilde\lambda_n$ and $\tilde\mu_n$ are the ones corresponding to the Laguerre measure for $\beta$ replaced by $\beta-1$. On the other hand, if $\tilde\mu_0=\lambda(\beta-2)$ (see \eqref{weywey}), then $\tilde\lambda_n=\lambda(n+1)$ and $\tilde\mu_{n+1}=\lambda(n+\beta-1)$, for $n\geq0$, that is, a dual case but not exactly the one considered in Remark \ref{remdual2}, since $\mu_0=0$ in this case.

\end{document}